\newcommand{\ex}{\mathbb{E}}
\newtheoremstyle{dotless}{}{}{\itshape}{}{\bfseries}{}{ }{} 
\newtheorem{lemma}{Lemma}
\newtheorem{prop}{Proposition}
\newtheorem*{remark}{Remark}
\def\nfootnote{\xdef\@thefnmark{}\@footnotetext}
\begin{document} 

\title{Primes in short intervals: Heuristics and calculations}

\author{ Andrew Granville}

\address{D{\'e}partment  de Math{\'e}matiques et Statistique,   Universit{\'e} de Montr{\'e}al, CP 6128 succ Centre-Ville, Montr{\'e}al, QC  H3C 3J7, Canada.}
   \email{andrew@dms.umontreal.ca}  

\author{Allysa Lumley}
\address{Centre recherche math{\'e}matiques,   Universit{\'e} de Montr{\'e}al, CP 6128 succ Centre-Ville, Montr{\'e}al, QC  H3C 3J7, Canada.}
   \email{ }  

\email{lumley@crm.umontreal.ca}

\dedicatory{Dedicated to the memory of Lord Cherwell $^\dag$}\nfootnote{$^\dag$\ Lord Cherwell's scientific advice to Winston Churchill during the second world war led to the development support and subsequent unveiling of several extraordinary military innovations.
Partly as a consequence of Cherwell's status, scientific research, even in pure mathematics, was never so encouraged as after the war. In 1956, Cherwell returned to Oxford University to pursue his earlier interests, writing a paper with E.M.~Wright on conjectures about the distribution of prime tuples, and another on primes in short intervals, before his death in 1957.}

\thanks{Thanks are due to James Maynard for some helpful remarks on both the content and the exposition,   to Kevin Ford and Drew Sutherland for making various data available, as well as to the three anonymous referees for their helpful comments.}

\begin{abstract}
We formulate, using heuristic reasoning,  conjectures for the range of the number of primes in intervals of length $y$ around $x$, where $y\ll (\log x)^2$. 
In particular we conjecture that the maximum grows surprisingly slowly as $y$ ranges from $\log x$ to $(\log x)^2$.
We will exhibit the available data, showing that it   somewhat supports  our conjectures, though not so well that there may not be room for some modifications.
\end{abstract}

\date{}

\maketitle

\section{Introduction}

We are interested in estimating the maximum and minimum number of primes in a length $y$ sub-interval of $(x,2x]$, denoted by
\[
M(x,y):=\max_{X\in (x,2x]} \pi(X+y)-\pi(X)  \text{ and } m(x,y):=\min_{X\in (x,2x]} \pi(X+y)-\pi(X) ,
\]
respectively, so that
\[
m(x,y)\leq \pi(X+y)-\pi(X)  \leq M(x,y) \text{ whenever } x<X\leq 2x,
\]
and these bounds cannot be improved (by definition).
  It is widely believed that $m(x,y)=0$ for $y\ll (\log x)^2$ though we do not know the precise value of the implicit constant. However there has been little study of how $m(x,y)$ subsequently grows, or of how $M(x,y)$ behaves for $y\ll (\log x)^{2+o(1)}$. In this article we will conjecture a series of guesstimates for $M(x,y)$ and $m(x,y)$ in different ranges, comparing these  estimates to what relevant data we can compute, and discussing some of the issues that prevent us from being too confident of these guesses.

The starting point for our investigations came from a comparison of two known observations:

Based on the (conjectured) size of admissible sets we believe that there exists a constant $c>0$ such that 
\[
M(x,y) \sim \frac y{\log y}
\]
for  $y\leq c \log x$, as long as $y\to \infty$ as $x\to \infty$ (see sections 1.1, 4.1, 8.1 and 9.1).  On the other hand, based on a modification of Cram\'er's probabilistic model \cite{Cra} for the distribution of primes (which in turn is based on Gauss's observation that the primes have density $\frac 1{\log x}$ around $x$), we believe that 
\[
M(x,y) \sim \sigma_+(A) \frac y{\log x}
\]
for $y=(\log x)^A$ with $A>2$, for some constant $\sigma_+(A)>1$, for which $\sigma_+(A)\to 1^+$ as $A\to \infty$ (see sections 1.5, 3.1, and 7.2). 

Therefore it seems that in both ranges,  $M(x,y)$ is roughly linear in $y$: In particular,
 \[
 M(x,y) \sim \frac y{\log\log x} \text{ for } y \text{ a little smaller than }\log x,
 \]
 whereas, if $c_+:=\sigma_+(2)$ then
 \[
 M(x,y)\sim c_+\frac y{\log x}\text{ for } y \text{ a little bigger than } (\log x)^2. 
 \]
If true then $ M(x,y)$ has  quite different slopes, $\frac 1{\log\log x}$ vs. $\frac {c_+}{\log x}$, in these two different ranges, and so there is a substantial change in behaviour
of $M(x,y)$ as $y$ grows from around $ \log x$ to slightly beyond $(\log x)^{2}$. 
Our main goal is  to investigate what happens in-between, though also to give heuristic support for the claims above.

At the end-points of this in-between interval, the above claims suggest that 
\[
M(x,\log x)\sim \frac {\log x}{\log\log x} \text{ whereas } M(x,(\log x)^2) \asymp \log x,
\]
so $M(x,y)$ does not seem to get much bigger as $y$ grows from $\log x$ to $(\log x)^2$; indeed it grows by only a factor of $\log\log x$. This is  very different from before and after this interval: As $y$ goes from $1$ to $\log x$ we expect $M(x,y)$  to grow by a factor of   $\asymp \frac {\log x}{\log\log x} $, and as $y$ goes
from $(\log x)^2$ to $(\log x)^3$ to grow by a similar factor of $\asymp \log x$ (and indeed for any subsequent interval of multiplicative length $\log x$). This does not seem to have been previously observed.

Based on an appropriate heuristic we conjecture that if $1<A<2$ then 
\[
M(x,(\log x)^A)\sim \frac 1{2-A}  \cdot \frac {\log x}{\log\log x};
\]
 more precisely that if $\log x\leq y=o( (\log x)^2)$ then 
\begin{equation} \label{eq: Intermediate intervals}
M(x,y) \sim \frac {\log x}{\log \left(\tfrac{(\log x)^2} y\right) } .
\end{equation}
We will provide data with $x$ up to $10^{12}$ to support this claim, though it should be noted that although this is as far as we have been able to compute, these $x$ are still small enough that   secondary terms are likely to have a significant impact (see sections 1.2, 8.3, 9.2). For this reason we also look at
\[
M(x,2y)/M(x,y)
\] 
because we expect that, as $x\to \infty$ this looks much like $1$ in this range, and $2$ outside this range. However we will compare the data for this ratio to a more precise conjecture.

In this article we will argue that there are four ranges of $y$ in each of which we expect  different behaviour for   $M(x,y)$, namely:
\[
y\ll \log x;\ \log x\ll y =o(\log x)^2;\ y\asymp (\log x)^2;\ \text{ and } y/(\log x)^2\to \infty \text{ with } y\leq x.
\]
We will present these separately in the introduction though there is significant overlap in the theory; and when it comes to presenting data for a given value of $x$ up to which we can compute, it is often unclear where one $y$-interval should end and the next begin.

\subsection{Guesstimates for very short intervals: $y\ll \log x$}

We believe that if $y\leq \log x$ then 
\begin{equation} \label{eq: Very short interval}
M(x,y) \sim \frac y{\log y}
\end{equation}
provided $x,y\to \infty$.
We will now formulate a more precise conjecture than this for $y\leq (1-o(1)) \log x$:
A set of integers $A$   is \emph{admissible} if for every prime $p$ there is a residue class mod $p$ that does not contain any element from the set (otherwise $A$ is \emph{inadmissible}).  Let $S(y)$ denote the maximum size of an admissible set $A$  which is a subset of $[1,y]$,\footnote{We say that $A$, and any translate of $A$, has \emph{length} $\leq y$.} so that
\[
M(x,y) \leq  S(y) \text{ if } x\geq y
\]
(for if $X<p_1<\dots<p_k\leq X+y$ are primes then $\{ p_1-X,\ldots, p_k-X\}$ is an admissible set).
 We believe that if $y\leq (1-o(1))\log x$ then\footnote{The ``$o(1)$'' here can be interpreted as saying that for any fixed $\epsilon>0$, if $x$ is sufficiently large then \eqref{eq: Very short interval2} holds for all $y\leq (1-\epsilon)\log x$.}
\begin{equation} \label{eq: Very short interval2}
M(x,y) = S(y).
\end{equation}
These two conjectures are consistent since it is believed that $S(y)\sim \frac y{\log y}$.  
The data seems to confirm the conjecture \eqref{eq: Very short interval2} for $x=10^k$ for $k=9,10,11$ and $12$:

\begin{figure}[H]\centering{
\includegraphics[scale=.5]{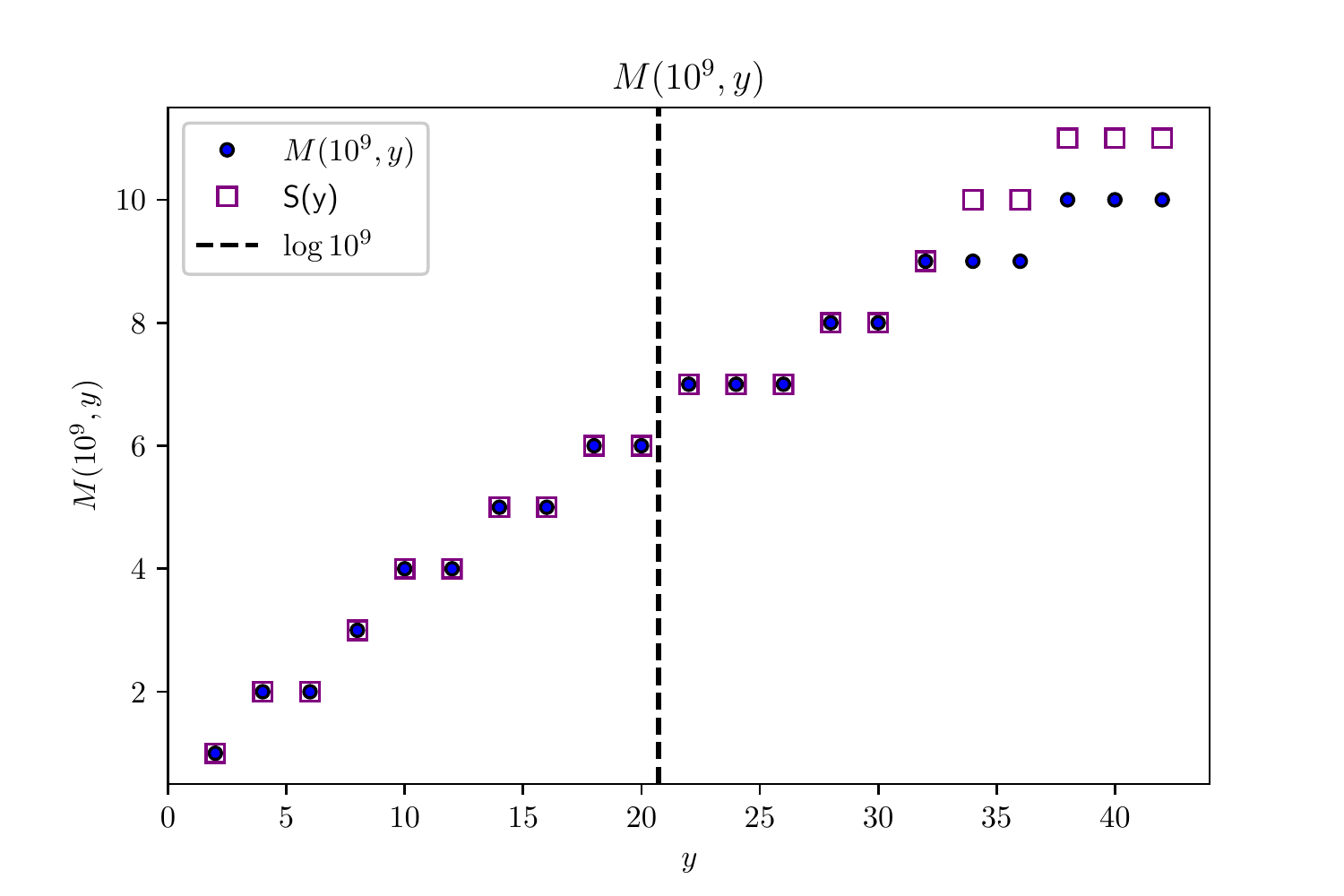}
\includegraphics[scale=.5]{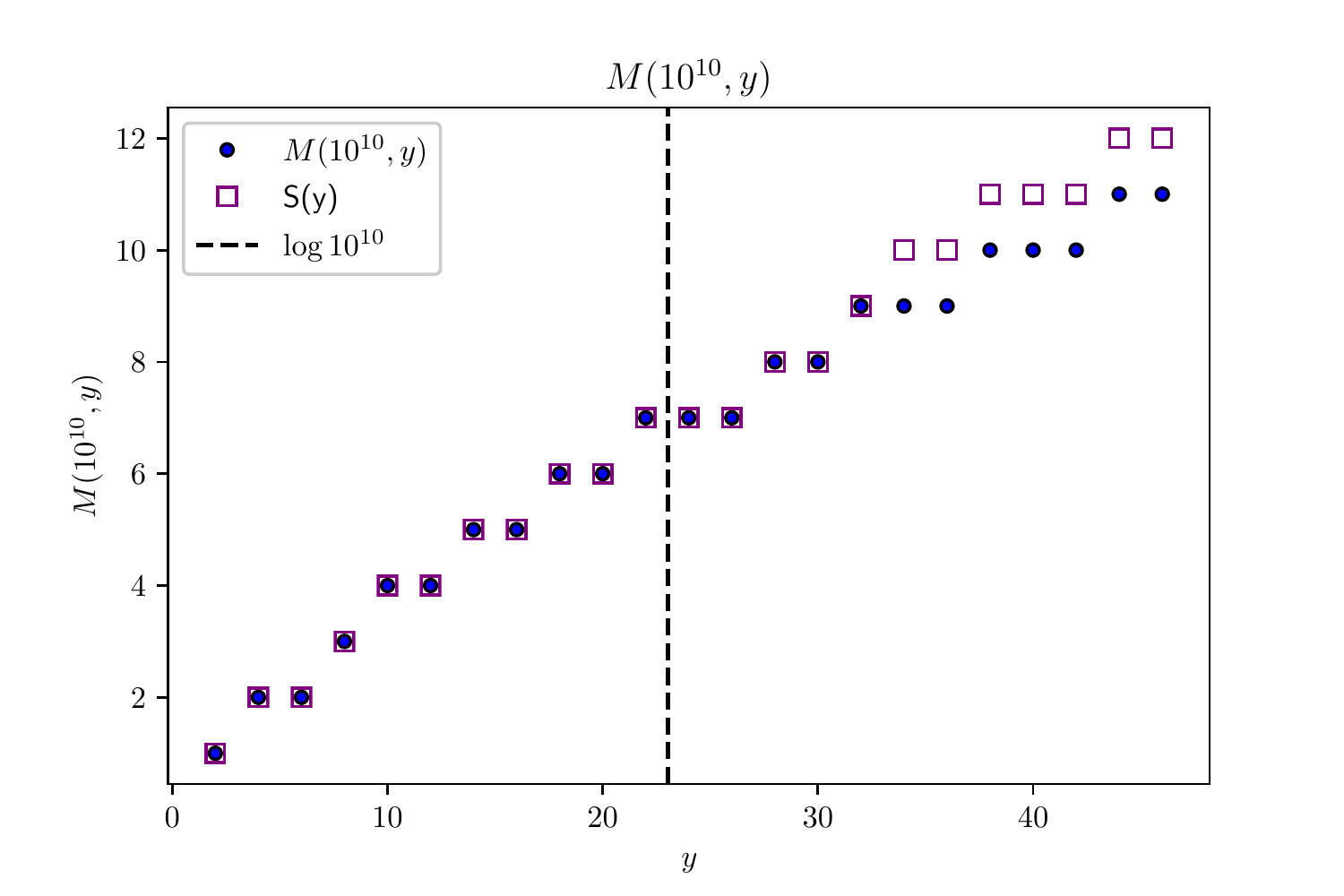}
\newline

\includegraphics[scale=.5]{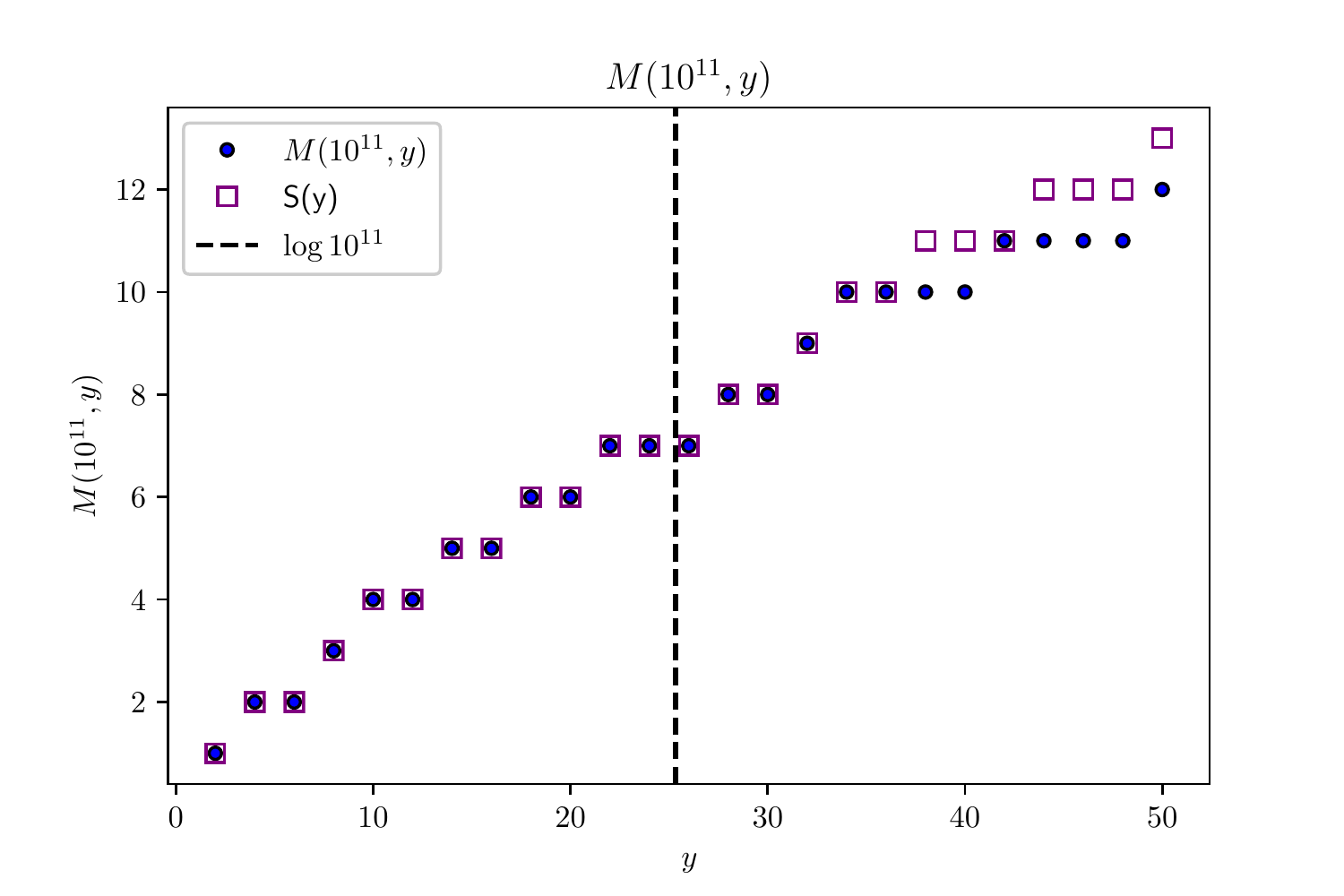}
\includegraphics[scale=.5]{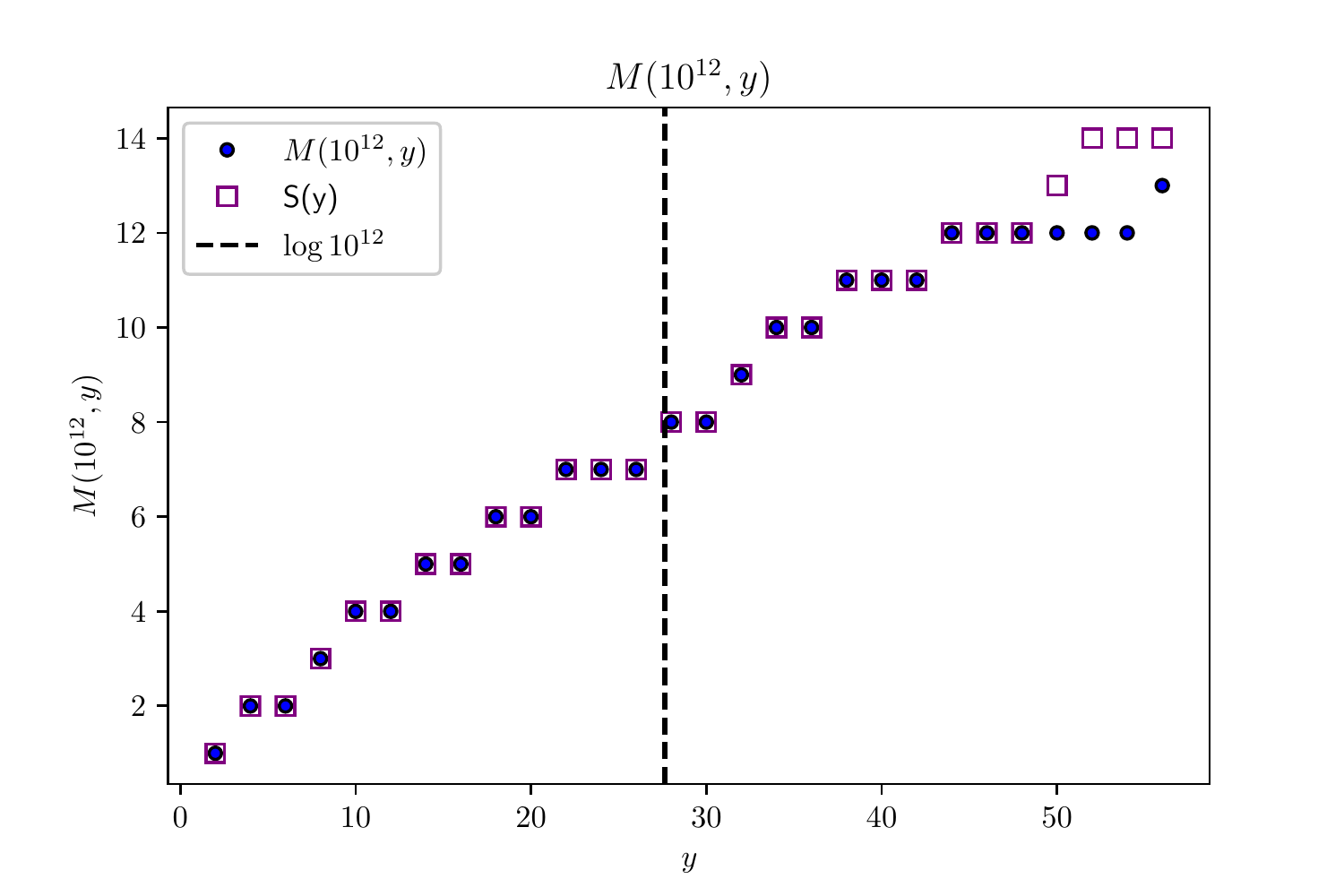} }
\caption{$M(x,y)$ vs. $S(y)$ for $x=10^k, k=9,\dots,12$ and $y\leq 2\log x$.\newline  We observe that $M(x,y)=S(y)$ up to the dashed line   at $y=\log x$}
\end{figure} 

\noindent In these graphs, for each $y$ (the horizontal axis), a  colored-in dot   represents
$M(x,y)$, and an empty box represents the value of $S(y)$. In this data, it appears that $M(x,y) = S(y)$ for $y$ up to about $\tfrac 32 \log x$, 
and then $M(x,y)$ is at worst a little less than $S(y)$ for $y$ between  $\tfrac 32 \log x$ and $2 \log x$, for these values of $x$.  Although we do believe that 
$M(x,y) = S(y)$ for all $y\leq (1-\epsilon) \log x$, for all sufficiently large $x$, and perhaps even for all $y\leq \log x$ for all $x$, we do not believe that this should be so for 
$y>(1+\epsilon) \log x$ and that the data we see here is an artifice of the relatively small values of $x$ we can compute with.  Indeed, if we are wrong about this, if 
$M(x,y) = S(y)$ for a sequence of $x,y$ with  $y>(1+\epsilon) \log x$ and $x$ arbitrarily large, then this would contradict the key conjecture in section 1.2.

More discussion of this heuristic in section 4, as well as in sections 8.1 and 9.1

\subsection{Intermediate length intervals: $ \log x\leq y=o( (\log x)^2)$}
In this range we believe that \eqref{eq: Intermediate intervals} holds:
\[
M(x,y) \sim L(x,y) \text{ where } L(x,y):=\frac {\log x}{\log \left(\tfrac{(\log x)^2} y\right) } .
\]
However, when comparing this prediction to the data,  it is not obvious how to interpret ``$o( (\log x)^2)$'' for a given $x$-value. We have made the rather arbitrary choice of $\tfrac 12 (\log x)^2$ as the upper bound for the $y$-range. We have also taken
$\tfrac 12 \log x$ as a lower bound which reflects our uncertainty as to whether things can really be predicted so precisely, though we have marked $\log x$ with a dashed line.

\begin{figure}[H]\centering{
\includegraphics[scale=.5]{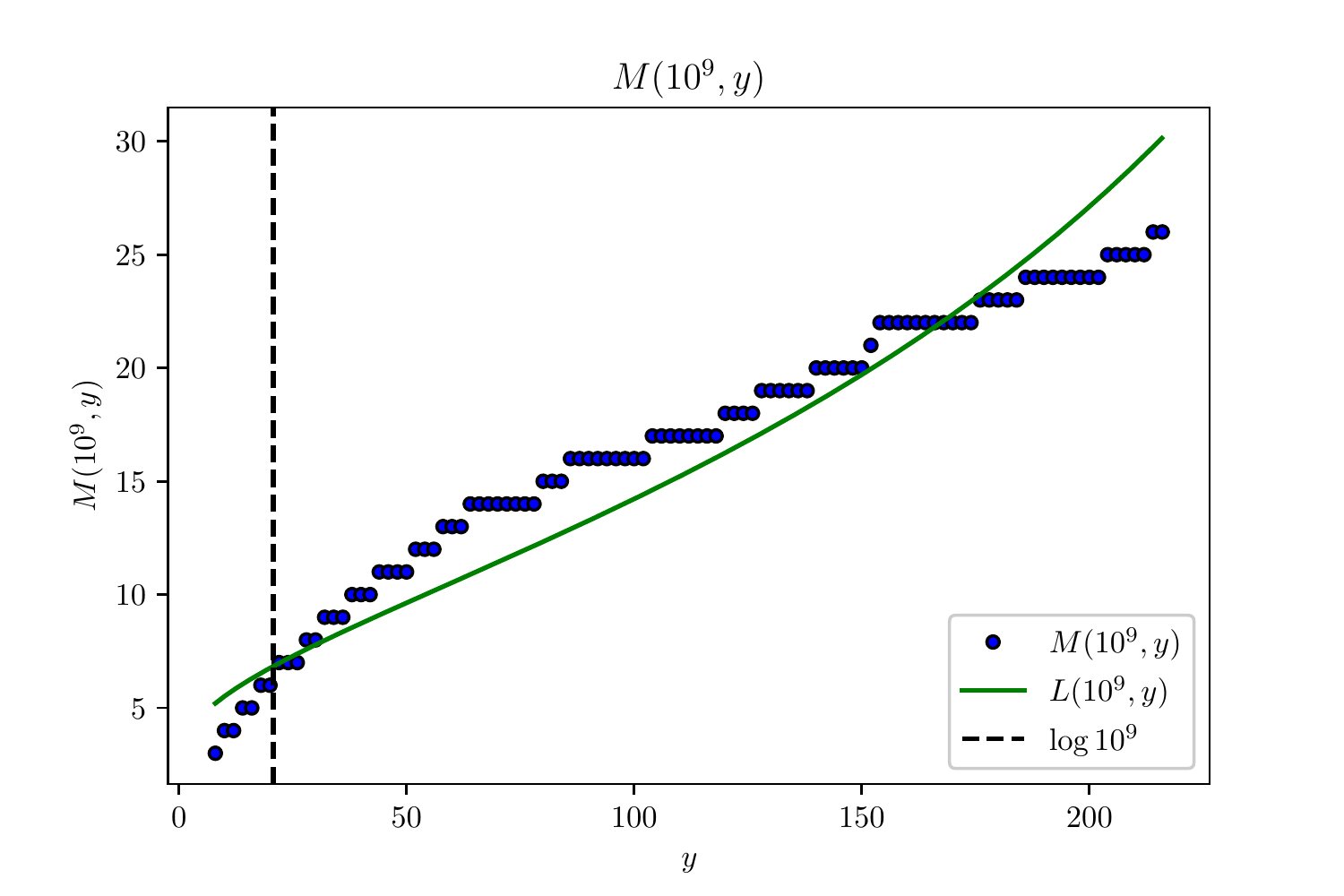}
\includegraphics[scale=.5]{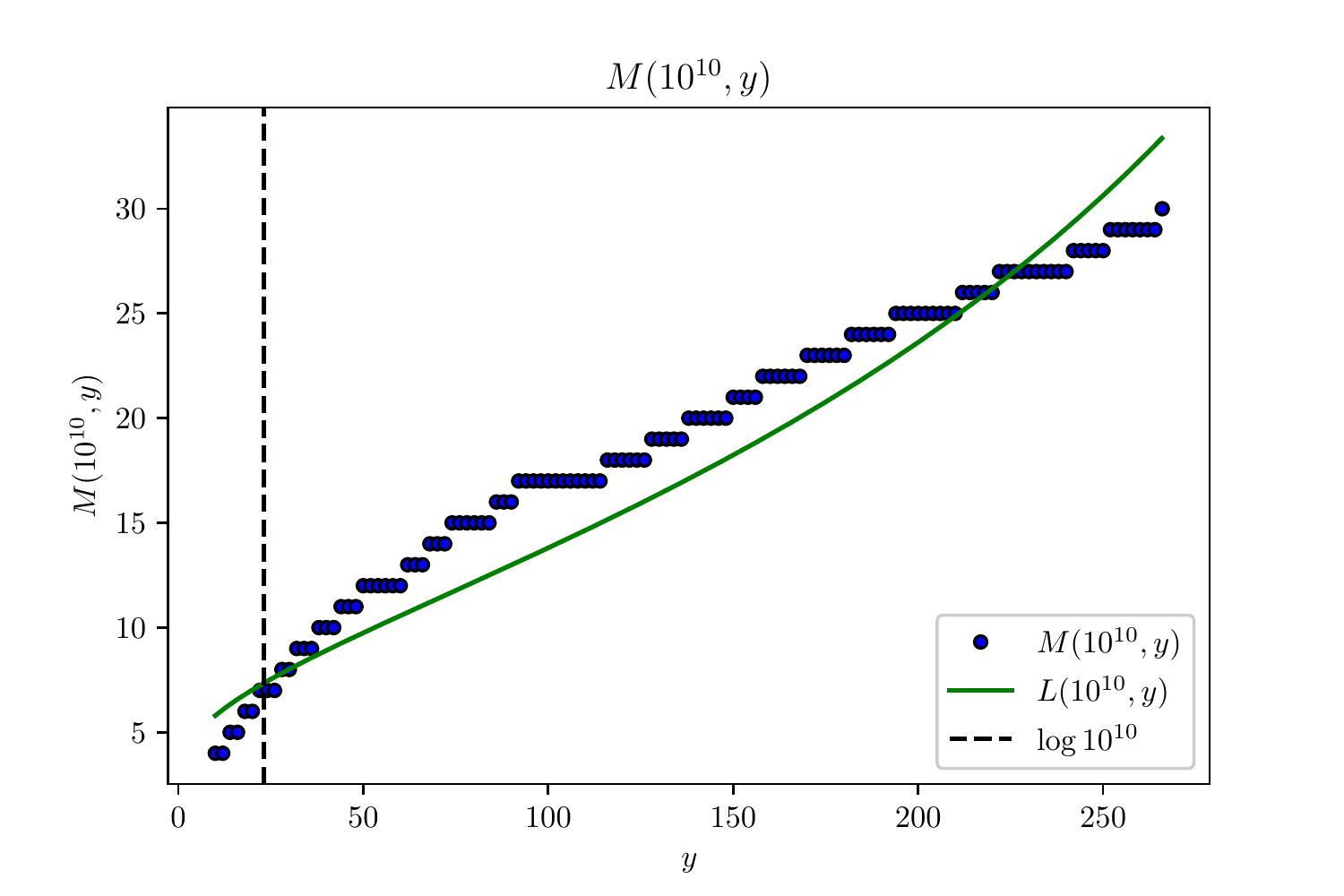}
\newline

\includegraphics[scale=.5]{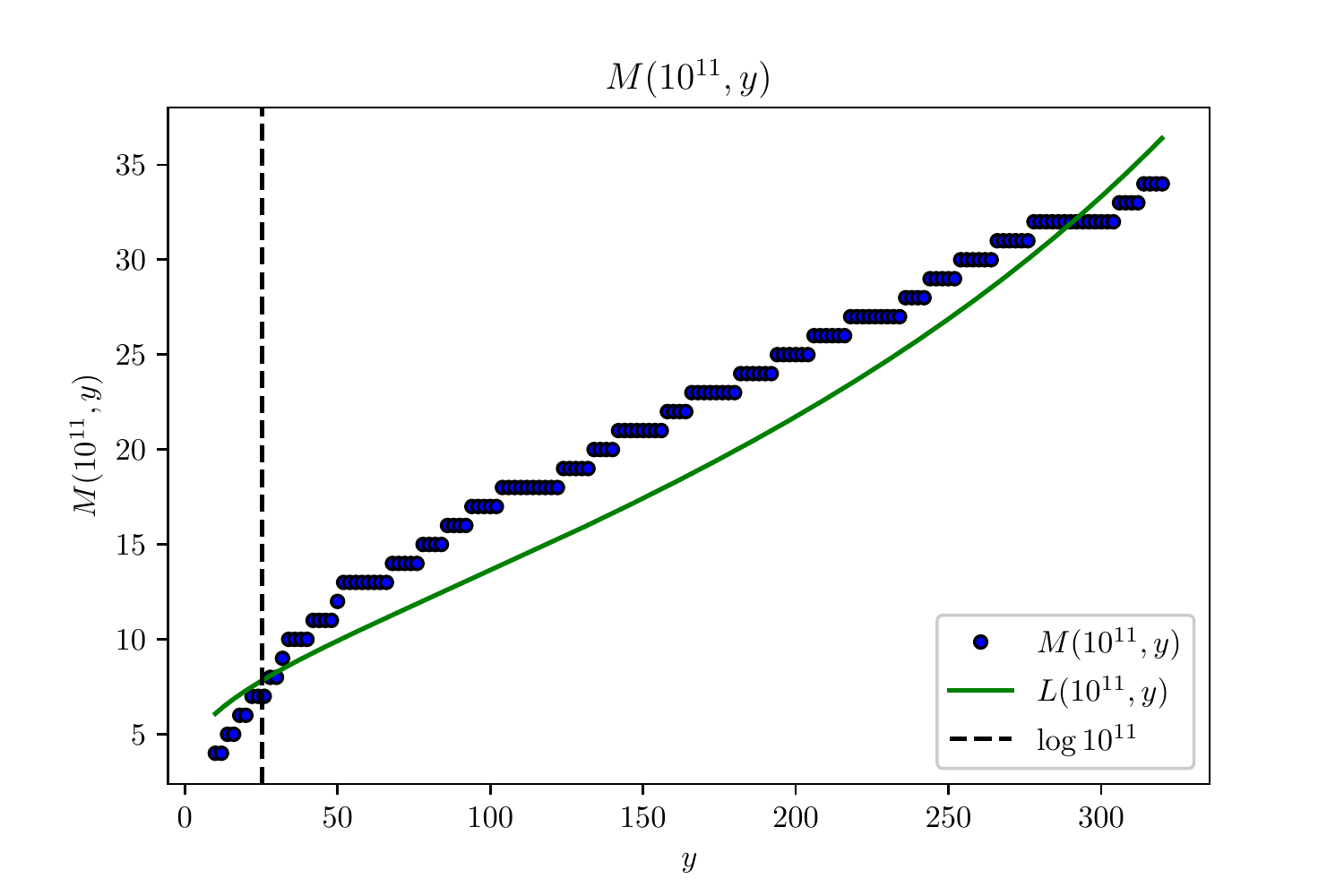}
\includegraphics[scale=.5]{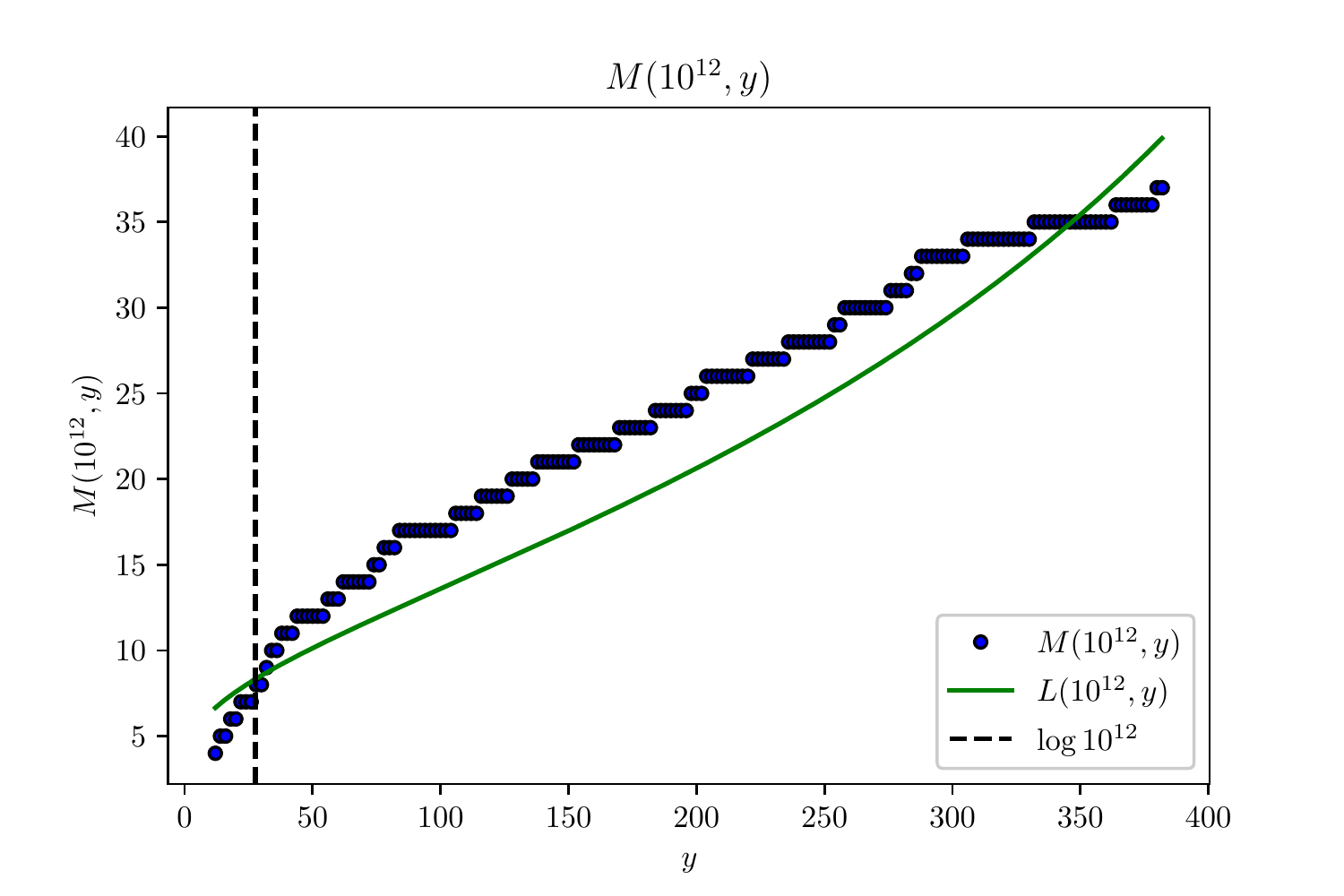} }
\caption{$M(x,y)$ vs.$L(x,y)$ for $x=10^k, k=9,\dots,12$  and $\tfrac 12\log x\leq   y\leq \tfrac 12(\log x)^2$.  Dashed line   at $y=\log x$, which is the end of the range of the $M(x,y)=S(y)$ conjecture.}
\end{figure} 

\noindent Here, for each $y$ (the horizontal axis), a  colored-in dot   represents $M(x,y)$, and the 
  continuous curve  $L(x,y)$ (our prediction in  \eqref{eq: Intermediate intervals}). Our prediction and the data seem to co-incide at $y=\log x$
  (where the dashed line is), and again at a point that seems to be slowly increasing (towards $\tfrac 12(\log x)^2$) as $x$ grows.
 The graph indicates that our prediction provides a pretty good approximation to the data in the whole range, though it is concave up whereas the data itself appears to yield a curve that is concave down. We have no explanation for that.

\subsection{The maximum on longer intervals: $y\asymp (\log x)^2$}
Here we mean that $y=t(\log x)^2$ for some fixed value of $t$.
In this range we will need to define two implicit functions to formulate our conjectures for $m(x,y)$ and $M(x,y)$:
For every given $t>0$ consider the equation
\[
u(\log u-\log t-1) +t = 1.
\]
We will show that for every $t>0$ there is a unique solution $u_+(t)$ with $u_+(t)>t$.
If $0<t<1$ there is no solution in $u\in (0,t)$, so we let $u_-(t)=0$.
If $t>1$ then there is a unique solution $u_-(t)$ with $0<u_-(t)<t$.  
We believe that there exist constants $c_-,c_+>0$ such that if
 $y=t(\log x)^2$ then
\begin{equation} \label{eq: LogSquared intervals}
m(x,y)\sim u_-(c_-t) \log x \text{ and } M(x,y)\sim u_+(c_+t) \log x.
\end{equation}
We will see at the end of section 3  that $c_\pm$ are constants that can be defined in terms of sieving intervals.
We know that $c_+\geq 1.015\dots$ and $c_-\leq \frac {e^\gamma}2 = 0.890536\dots$, and perhaps both of these inequalities should be equalities.\footnote{We will assume that $c_+ =1.015\dots$ and $c_-  = 0.8905\dots$ throughout for the purpose of comparing our conjectures to our data. We will explain   the significance of $1.015\dots$ at the end of section 3.}
Here is the   data for $M(x,y)$ in this range:

\begin{figure}[H]\centering{
\includegraphics[scale=.5]{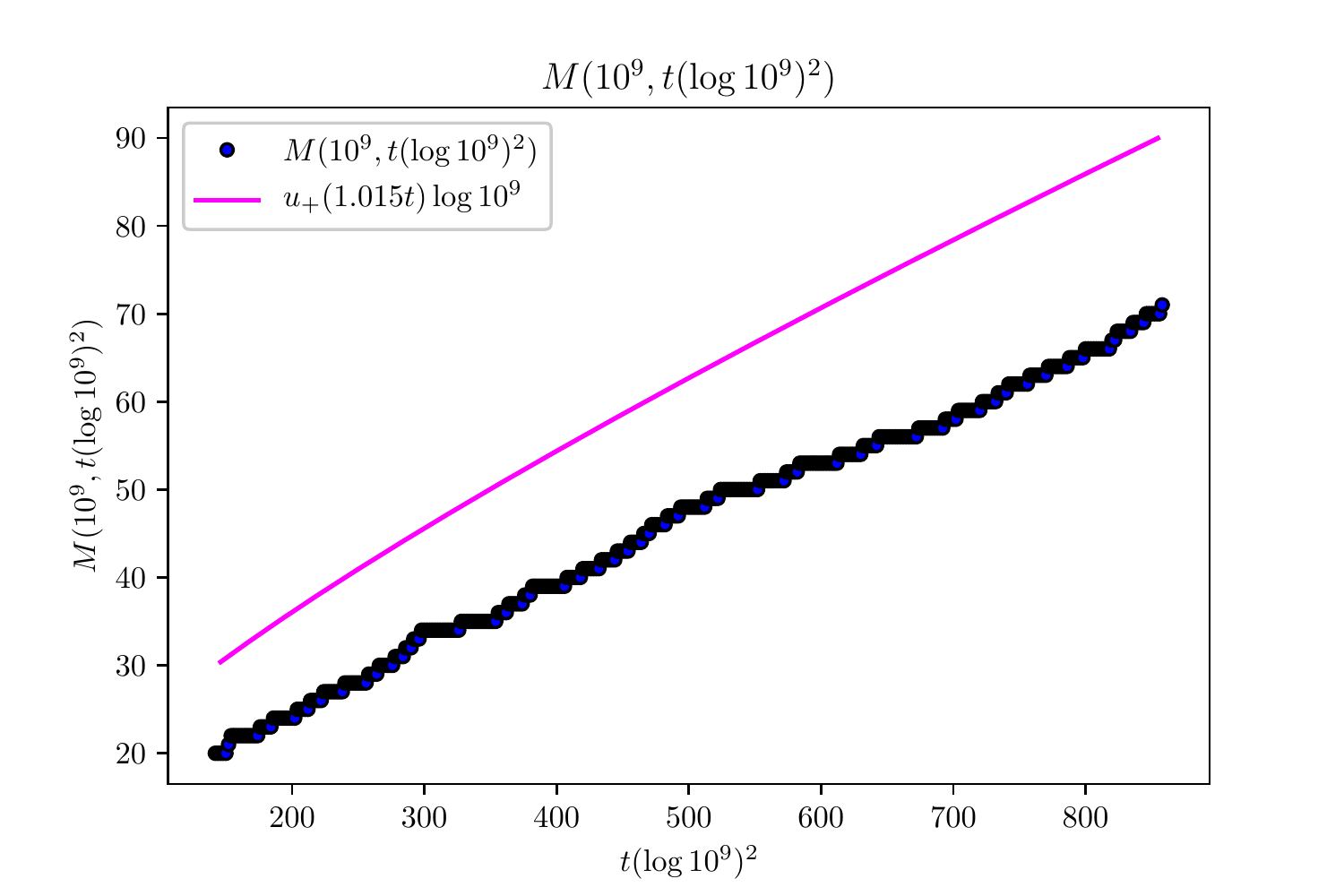}
\includegraphics[scale=.5]{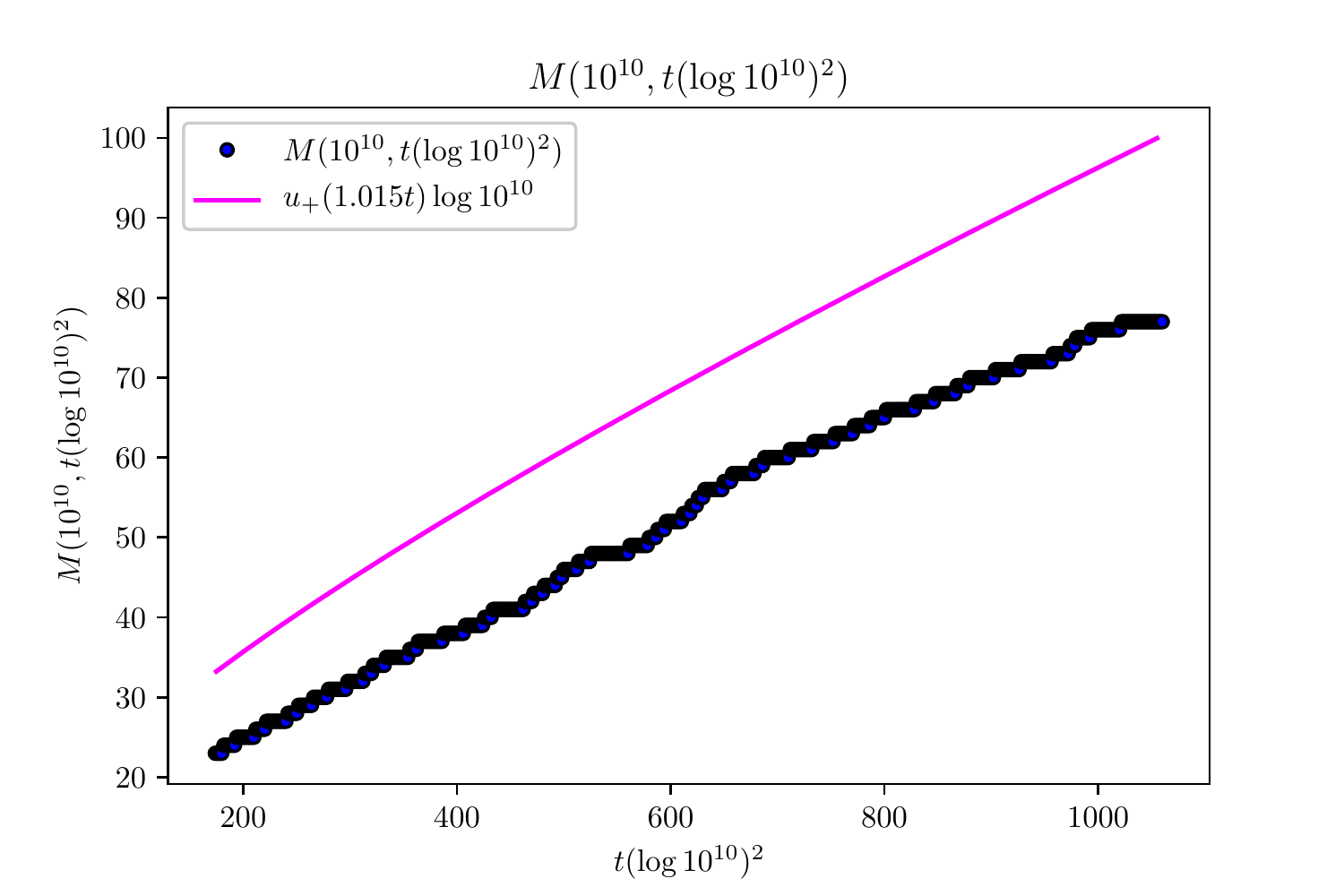}
\newline

\includegraphics[scale=.5]{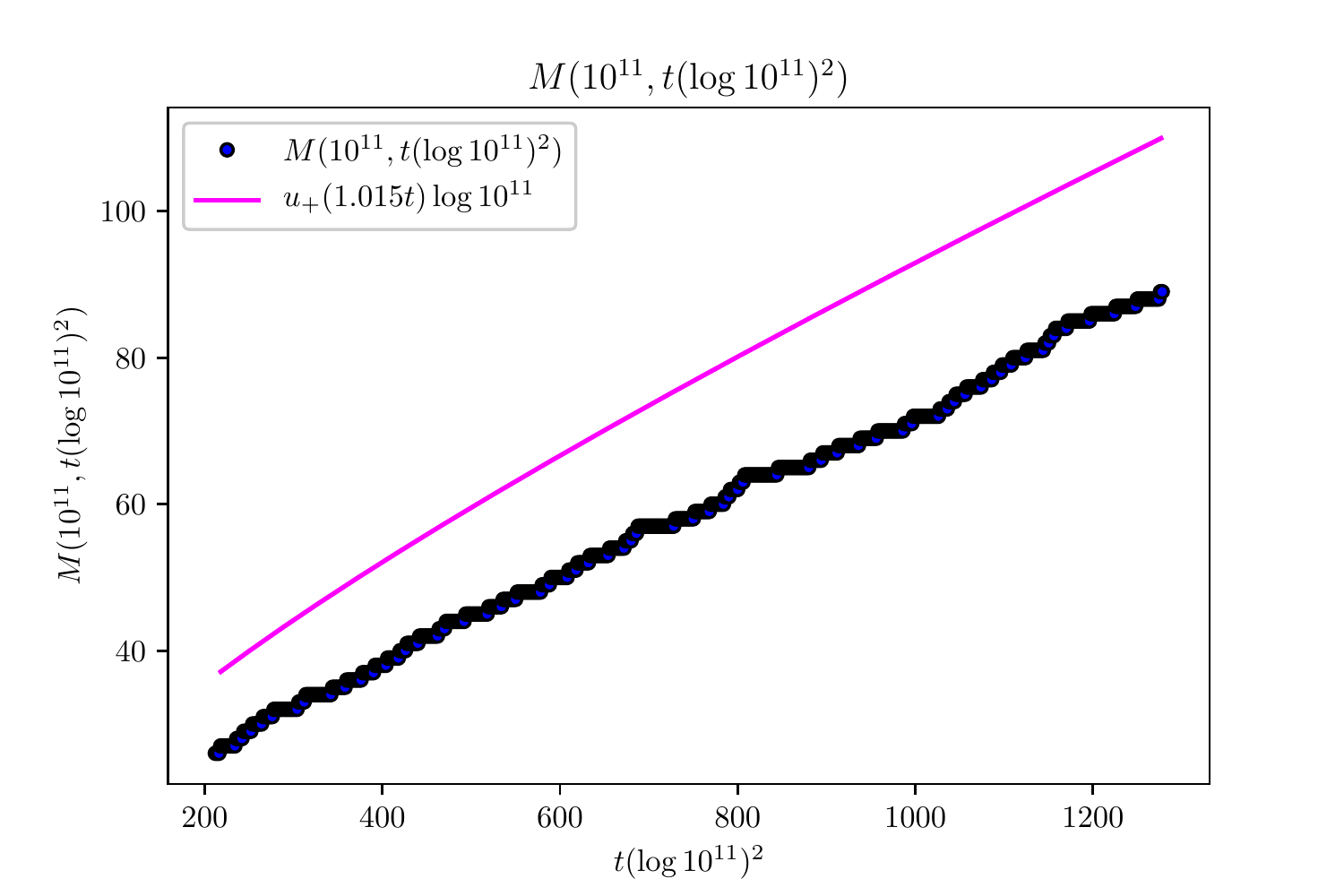}
\includegraphics[scale=.5]{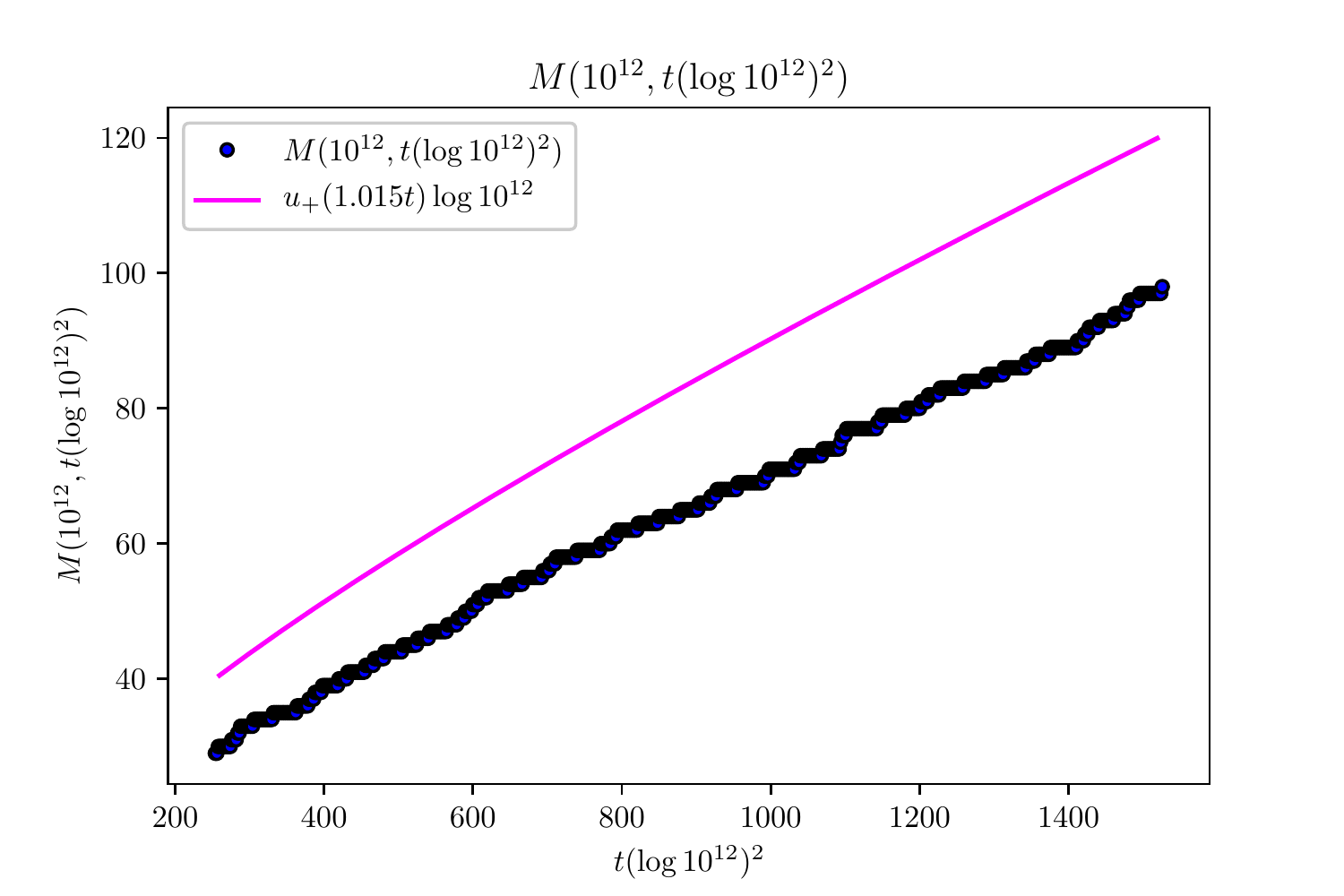} }
\caption{$M(x,y)$ vs. $u_+(1.015 t)  \log x$ where $y=t(\log x)^2$
\newline . \hskip .7in for $x=10^k, k=9,\dots,12$   and $\tfrac 13(\log x)^2\leq   y\leq 2(\log x)^2$.}
\end{figure} 

Here, for each $y$ (the horizontal axis), a  colored-in dot   represents $M(x,y)$, and the
   red curve represents our prediction $u_+(1.015 t)  \log x$ where $y=t(\log x)^2$.
 It appears that this prediction is  too large  by a factor of about $35\%$  (and if $c_+ $ is larger than $1.015$ then the red curve will be even further above the data). However we   believe this is  a consequence of only calculating up to $x=10^{12}$ and hopefully the data will get closer to our curve the larger $x$ gets.\footnote{One referee asks whether we expect that $u_+(c_+t)  \log x\geq M(x,t(\log x)^2)$ will persist for larger $x$; we have no idea how to make predictions that are this precise, and doubt the  value of trying to do so given how far out our predictions currently are from the data!}
 In this range for $y$ it is  already well-known that  data for  the minimum does not yet satisfy the standard conjectures:

\subsection{The minimum on longer intervals: $y\asymp (\log x)^2$}

The prediction \eqref{eq: LogSquared intervals} implies that if $c_-t<1$ then $m(x,t(\log x)^2)=0$ but not if $c_-t>1$. That is, we conjecture the following lower bound for the maximal gap between consecutive primes: 
 \[
 \max_{x<p_n\leq 2x} p_{n+1}-p_n \sim c_-^{-1} ( \log x)^2 \geq 2e^{-\gamma}( \log x)^2;
 \]
 and it is feasible that we have equality here.  This is larger   than Cram\'er's original conjecture (that this maximal gap is $\sim ( \log x)^2$).
 As we will discuss, Cram\'er's reasoning is  flawed by  failing to take account of divisibility by small primes  (a point originally made by the first author  back in \cite{Gr1} and recently re-iterated by the in-depth analysis of Banks, Ford and Tao in \cite{BFT}.)  However the data does not really support either conjecture, as the 
largest gap between consecutive primes that has been found is about $.9206 (\log x)^2$ (a shortfall of around $22\% $ from $2e^{-\gamma}\approx 1.1229\cdots$).  
\medskip

\begin{figure}[H]\centering{

\centerline{\vbox{\offinterlineskip \halign{\vrule #&\ \ #\ \hfill
&& \hfill\vrule #&\ \ \hfill # \hfill\ \ \cr \noalign{\hrule} \cr
height5pt&\omit && \omit && \omit & \cr &$ \ \ p_n $&& $p_{n+1}-p_n$ &&
 \hfill$(p_{n+1}-p_n)/\log^2 p_n$\hfill & \cr
height5pt&\omit && \omit && \omit & \cr \noalign{\hrule} \cr
height3pt&\omit && \omit && \omit & \cr
& 113   && \    14  && .6264 & \cr
height3pt&\omit && \omit && \omit & \cr
& 1327  && \  34  && .6576 & \cr
height3pt&\omit && \omit && \omit & \cr
& 31397 && \ 72 && .6715 & \cr
height3pt&\omit && \omit && \omit & \cr
& 370261 && 112 && .6812 & \cr
height3pt&\omit && \omit && \omit & \cr
& 2010733 && 148 && .7026 & \cr
height3pt&\omit && \omit && \omit & \cr
& 20831323 && 210 && .7395 & \cr
height3pt&\omit && \omit && \omit & \cr
& 25056082087 && 456 && .7953 & \cr
height3pt&\omit && \omit && \omit & \cr
& 2614941710599 && 652 && .7975 & \cr
height3pt&\omit && \omit && \omit & \cr
& 19581334192423 && 766 && .8178 & \cr
height3pt&\omit && \omit && \omit & \cr
& 218209405436543 &&   906 &&   .8311 & \cr
height3pt&\omit && \omit && \omit & \cr
&  1693182318746371 && 1132  && \hskip -.05in .9206& \cr
height3pt&\omit && \omit && \omit & \cr
 \noalign{\hrule}}} }
 
\caption{ (Known) record-breaking gaps between primes}}
\end{figure} 

\smallskip

In \cite{BFT} Banks, Ford and Tao graphed how the maximal gap between primes grows, as compared to the proposed asymptotics $2e^{-\gamma}( \log x)^2, (\log x)^2$ and the more precise $(\log x)(\log x-\log\log x)$.   In section 9.5 we discuss the heuristic justification for these conjectures and variants. All such heuristics seem to suggest that the maximal gap between consecutive primes up to $x$ should grow like $ \log x (a\log x+b\log\log x+c)$ for some constants $a,b,c$. The only possibilities for $a$ seem to be $a=1$ or $2e^{-\gamma}$, though there are many possible guesses for $b$ and $c$. Here we graph $2e^{-\gamma}( \log x)^2$ and $(\log x)^2$ as well as the best fit functions of the form $ \log x (a\log x+b\log\log x+c)$ where $a=1$ or $2e^{-\gamma}$.\footnote{One referee correctly feels that it is inappropriate to try to fit a justification to the data but, who knows, perhaps some enterprising future researcher will see a clearly good reason for our favourite candidate,
$ \log x (2e^{-\gamma}\log x-5\log\log x+6)$.}

\begin{figure}[H]\centering{
\includegraphics[scale=1]{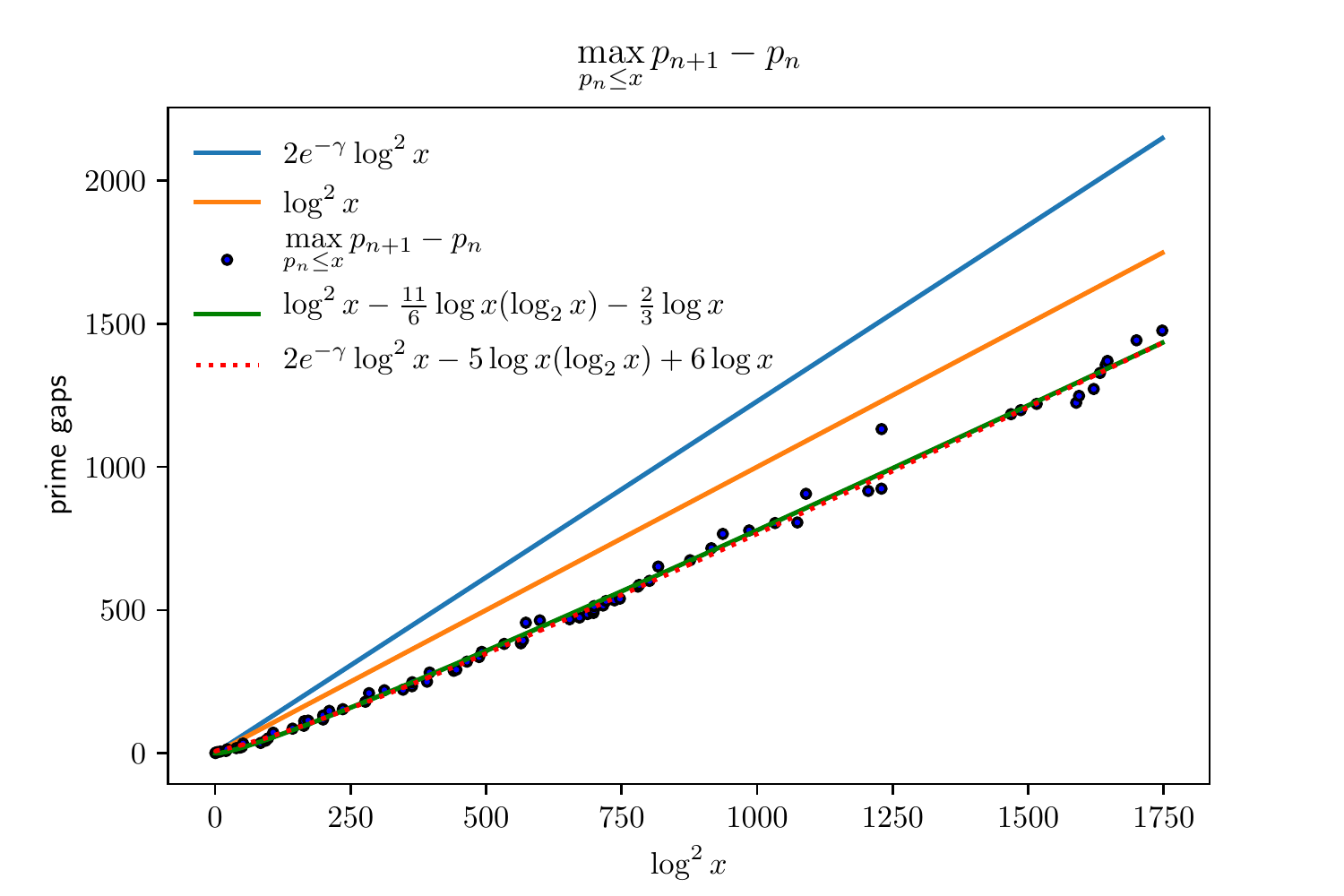}
\caption{$\displaystyle\max_{p_n\leq x} (p_{n+1}-p_n)$  vs. Conjectured approximations  } }
\end{figure} 

\noindent The data for the largest gap between consecutive primes is substantially smaller than our two  predictions. No one has suggested a good reason for this shortfall, though in appendix A we explain how at least some of this shortfall is due to the use of asymptotic estimates for primes and sieves, for relatively small values.

In Figure 5, we have also graphed  the best fit to the data of curves of the form $ \log x (a\log x+b\log\log x+c)$ with $a=1$ and $2e^{-\gamma}$, and the fit is tight. This suggests that we should be looking harder at possible secondary terms and reasons why they might occur.
\bigskip

If \eqref{eq: LogSquared intervals} really does hold then $m(x,y)\sim u_-(c_-t) \log x$ for $y=t(\log x)^2$, where 
$u_-(c_-t)=0$ when $c_-t\leq 1$, but $u_-(c_-t)>0$ for $c_-t>1$. It is of interest to compare this prediction for $m(x,y)$ to the data, and we will assume that $c_-=\frac {e^\gamma}2 = 0.8905 \dots$ for the purpose of comparison:


\begin{figure}[H]\centering{
\includegraphics[scale=.5]{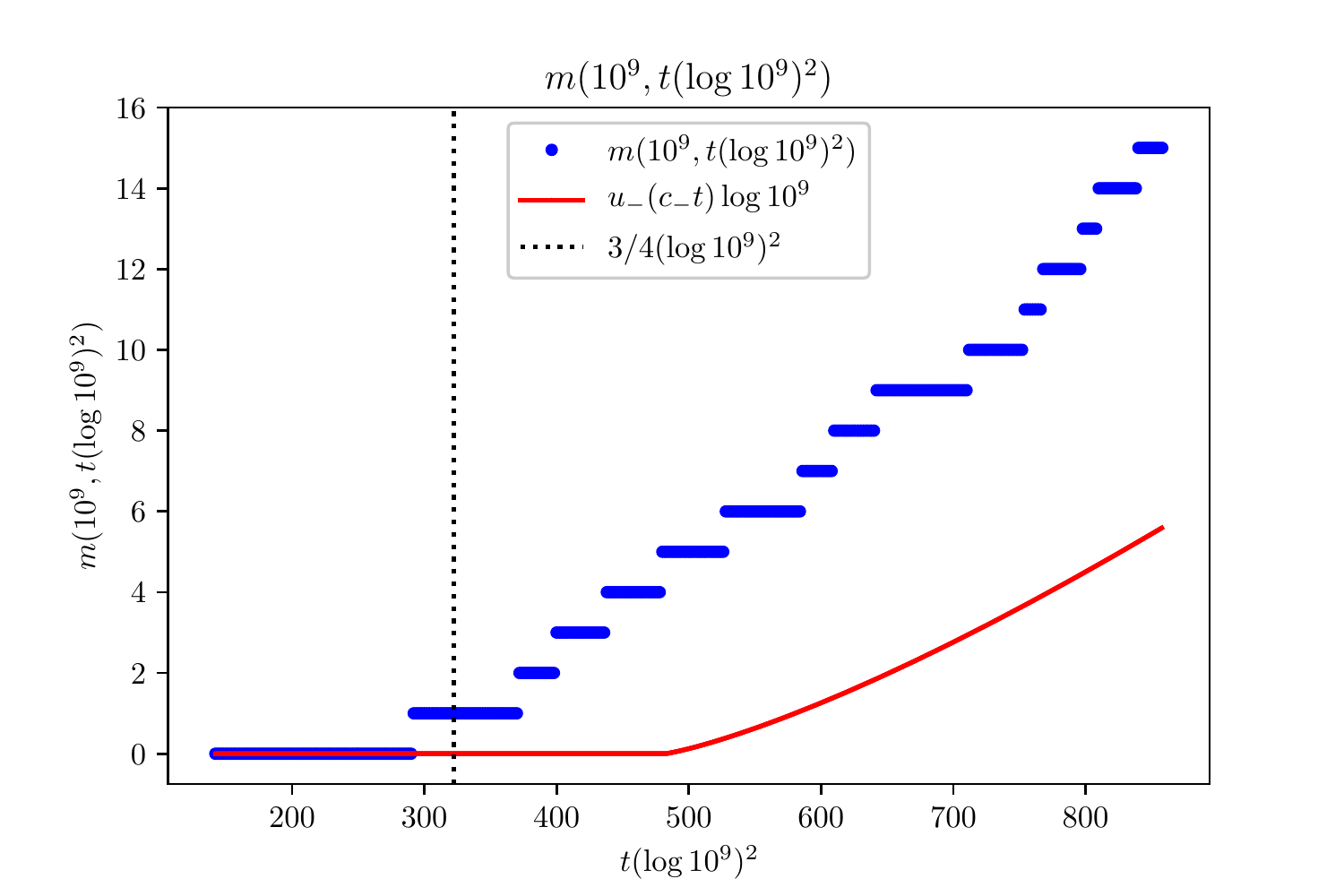}
\includegraphics[scale=.5]{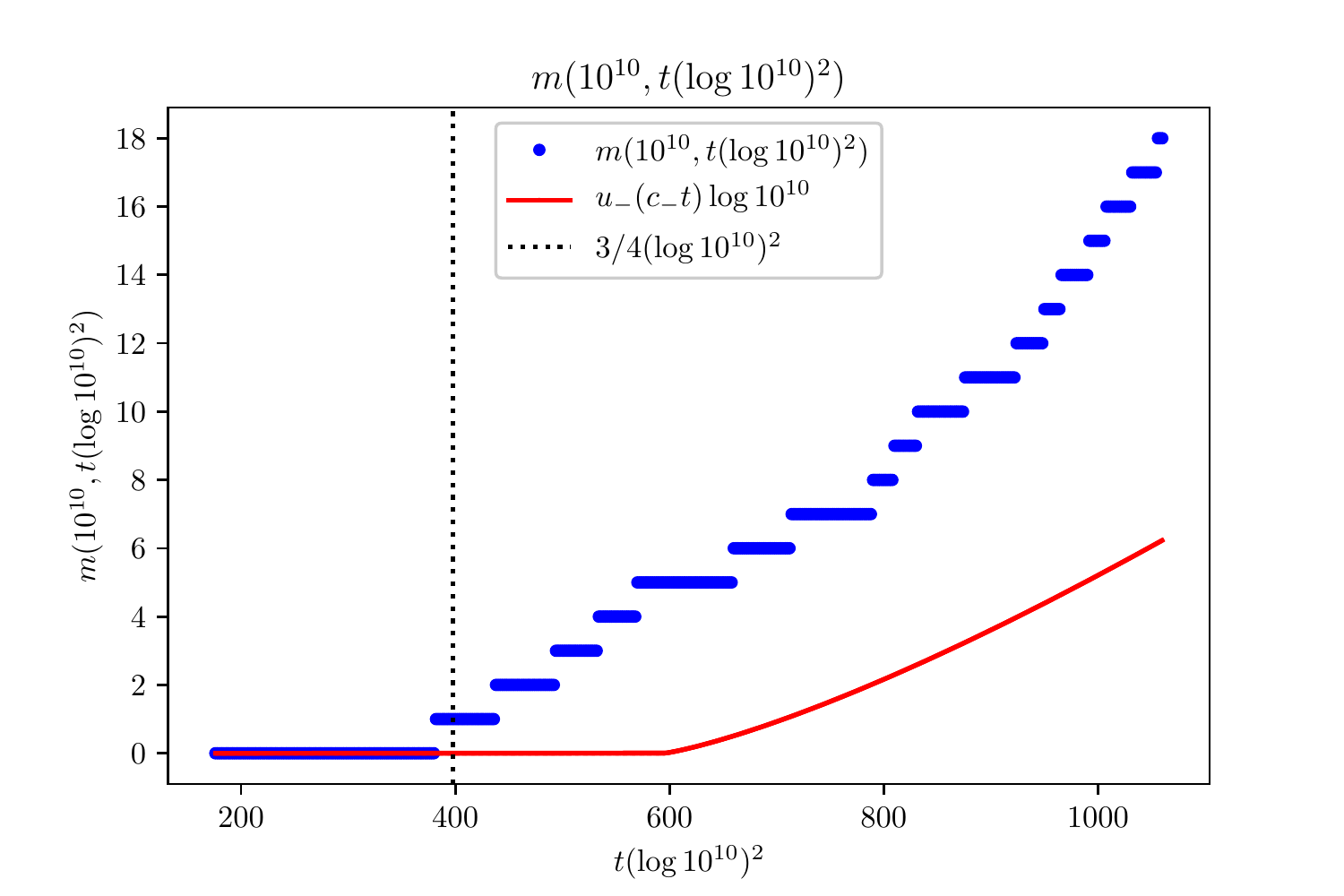}
\newline

\includegraphics[scale=.5]{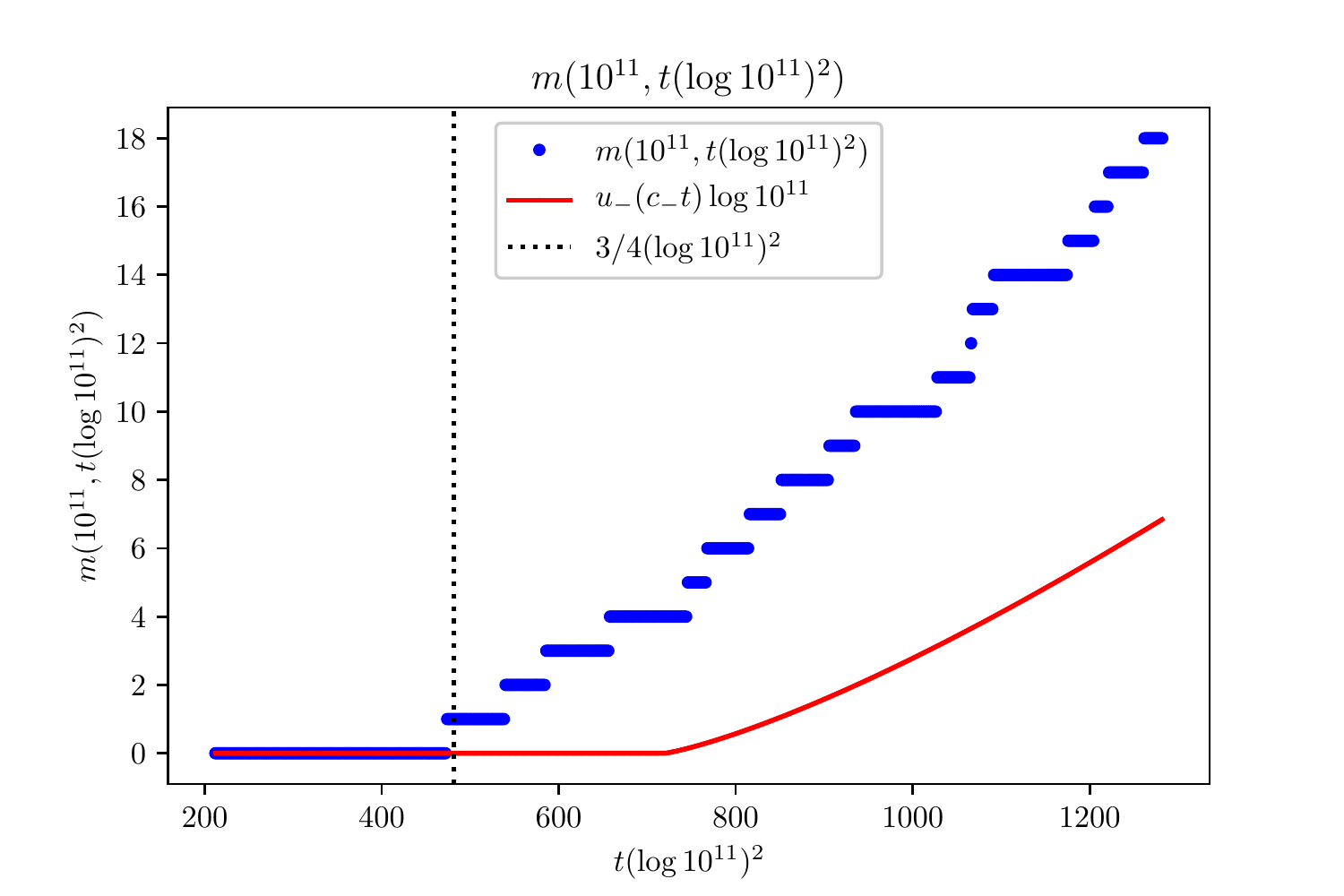}
\includegraphics[scale=.5]{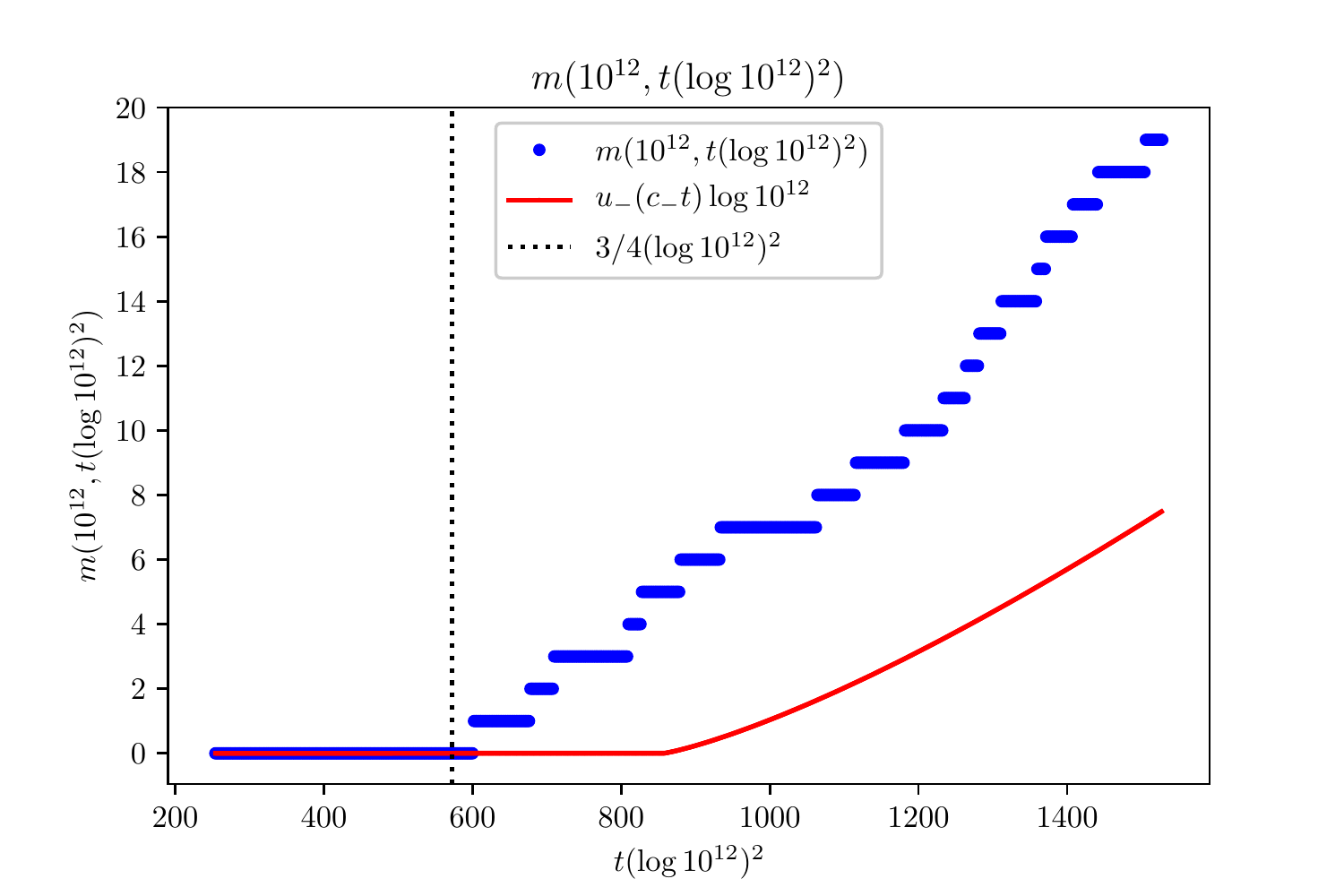}
 }
 \caption{$m(x,y)$ vs. $u_-(0.8905 t)  \log x$ where $y=t(\log x)^2$
 \newline . \hskip .7in for $x=10^k, k=9,\dots,12$ and $\tfrac 13(\log x)^2\leq   y\leq 2(\log x)^2$.}
\end{figure} 

For these values of $x$ it appears that the smallest $y$ for which $m(x,y)>0$ is at about $y=\tfrac 34 (\log x)^2$, which is significantly smaller   than in the prediction (though the ratio $y/(\log x)^2$ appears to be growing slowly with $x$).  This confirms what we saw in the previous two figures when studying $\displaystyle\max_{p_n\leq x} (p_{n+1}-p_n)$. We plotted the maximum $M(x,y)$ vs our prediction in this same range in  Figure 3 and that data there appears to have a similar shape to our prediction. However it is not obvious here whether the data for the minimum, $m(x,y)$, has a similar shape to our prediction.

We now compare our predictions for both the maxima and the minima with the data in the range $ \tfrac 13(\log x)^2\leq   y\leq 2(\log x)^2$, on the same graph, to get  a better sense of how well these fit:

\begin{figure}[H]\centering{
\includegraphics[scale=.5]{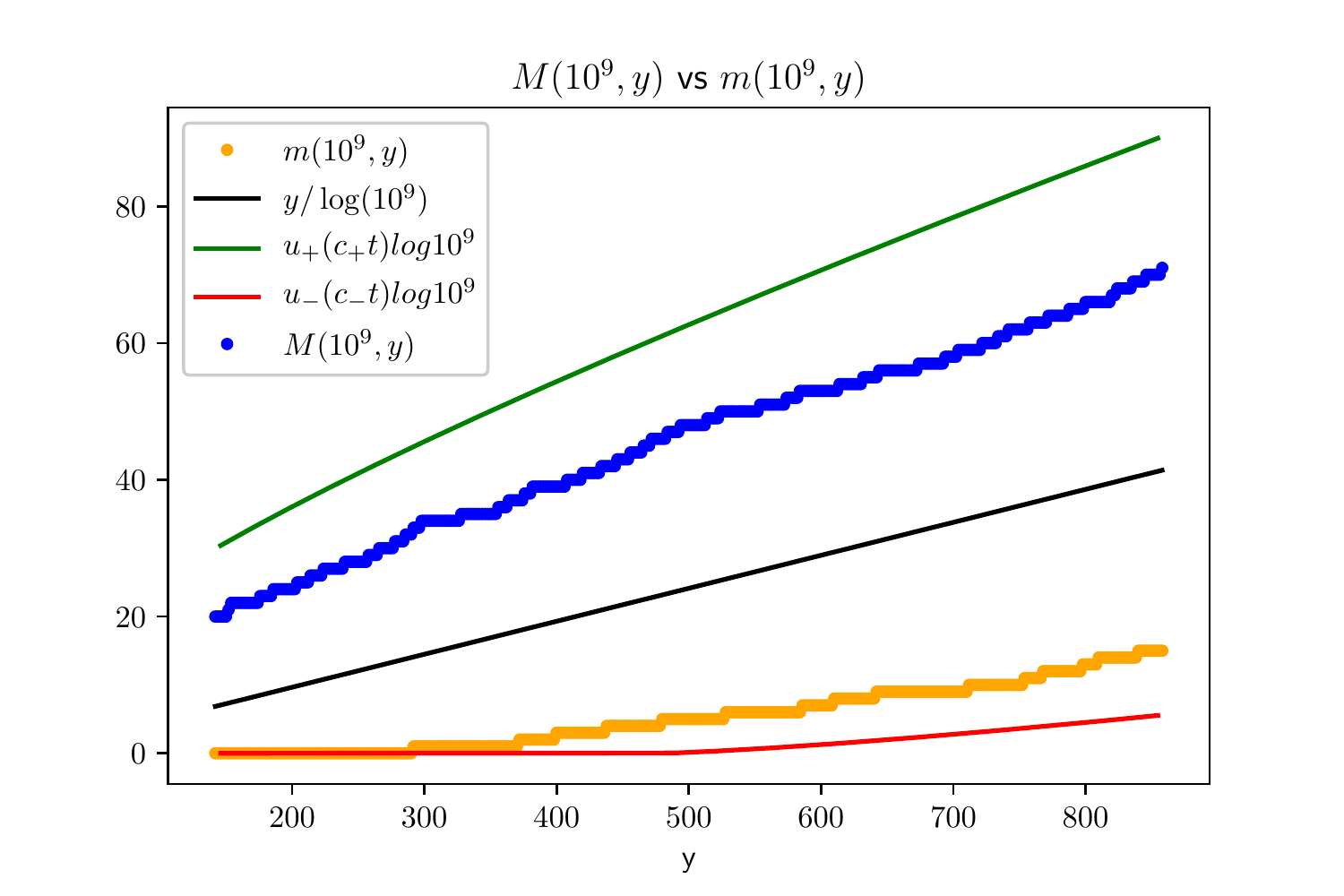}
\includegraphics[scale=.5]{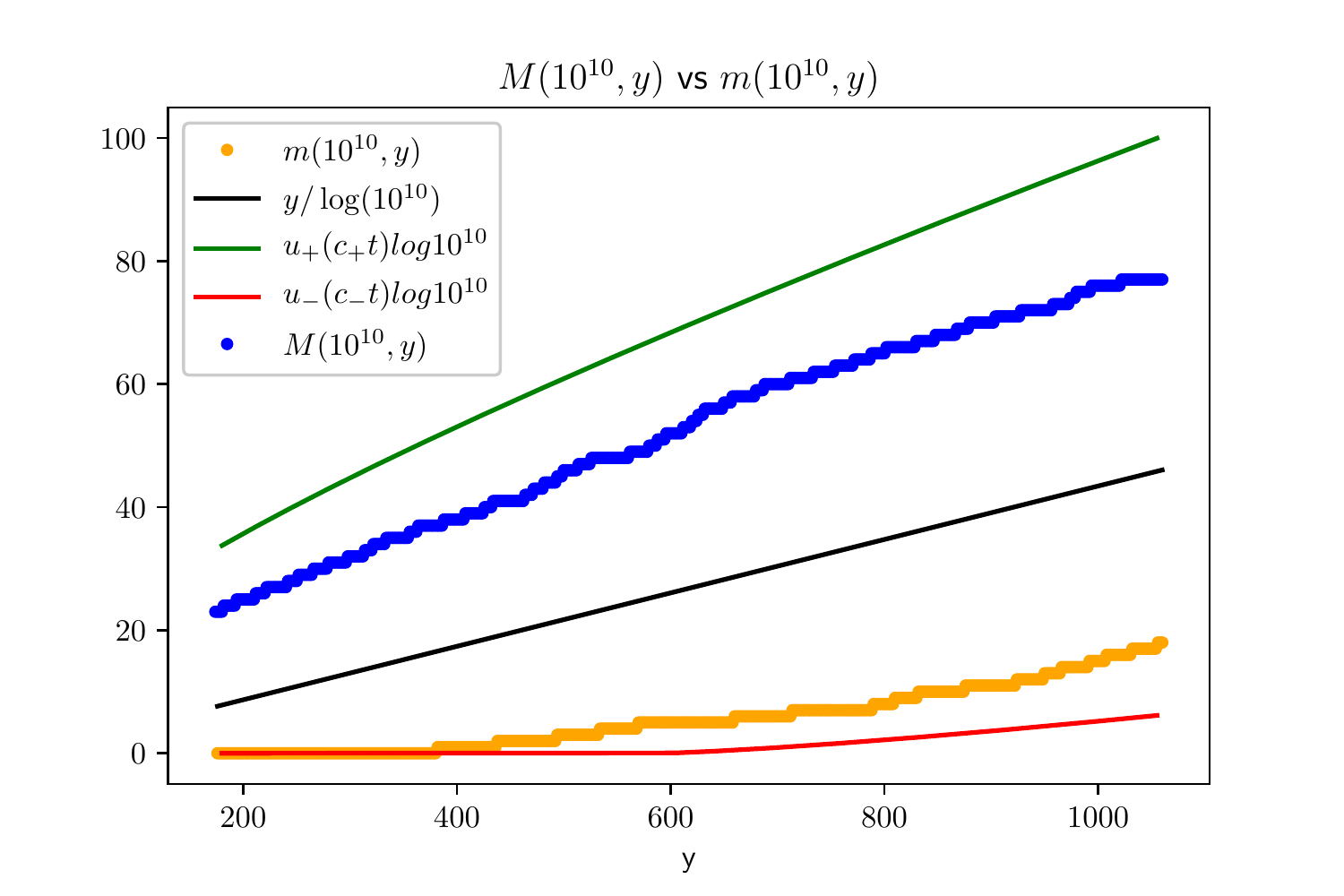}
\newline

\includegraphics[scale=.5]{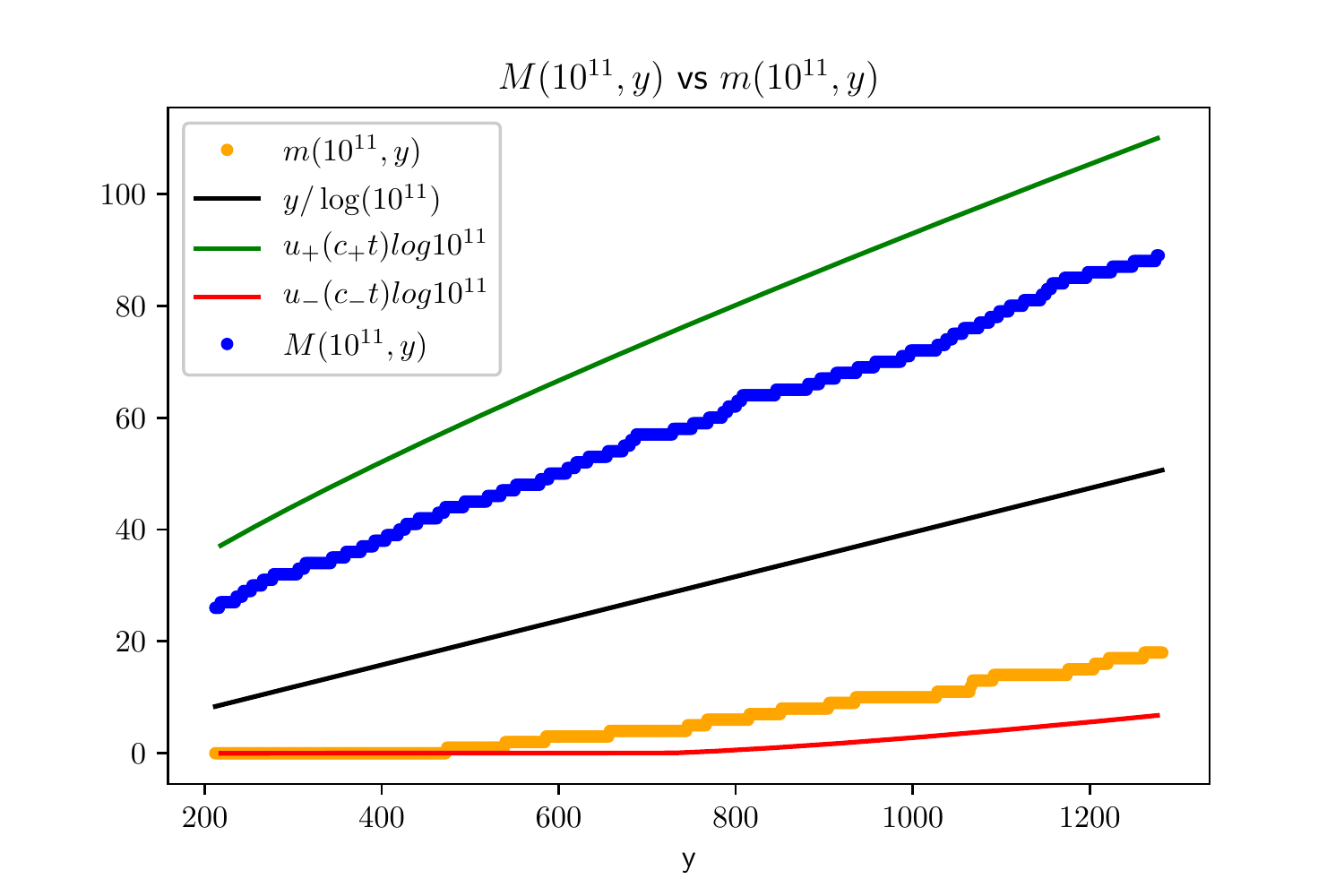}
\includegraphics[scale=.5]{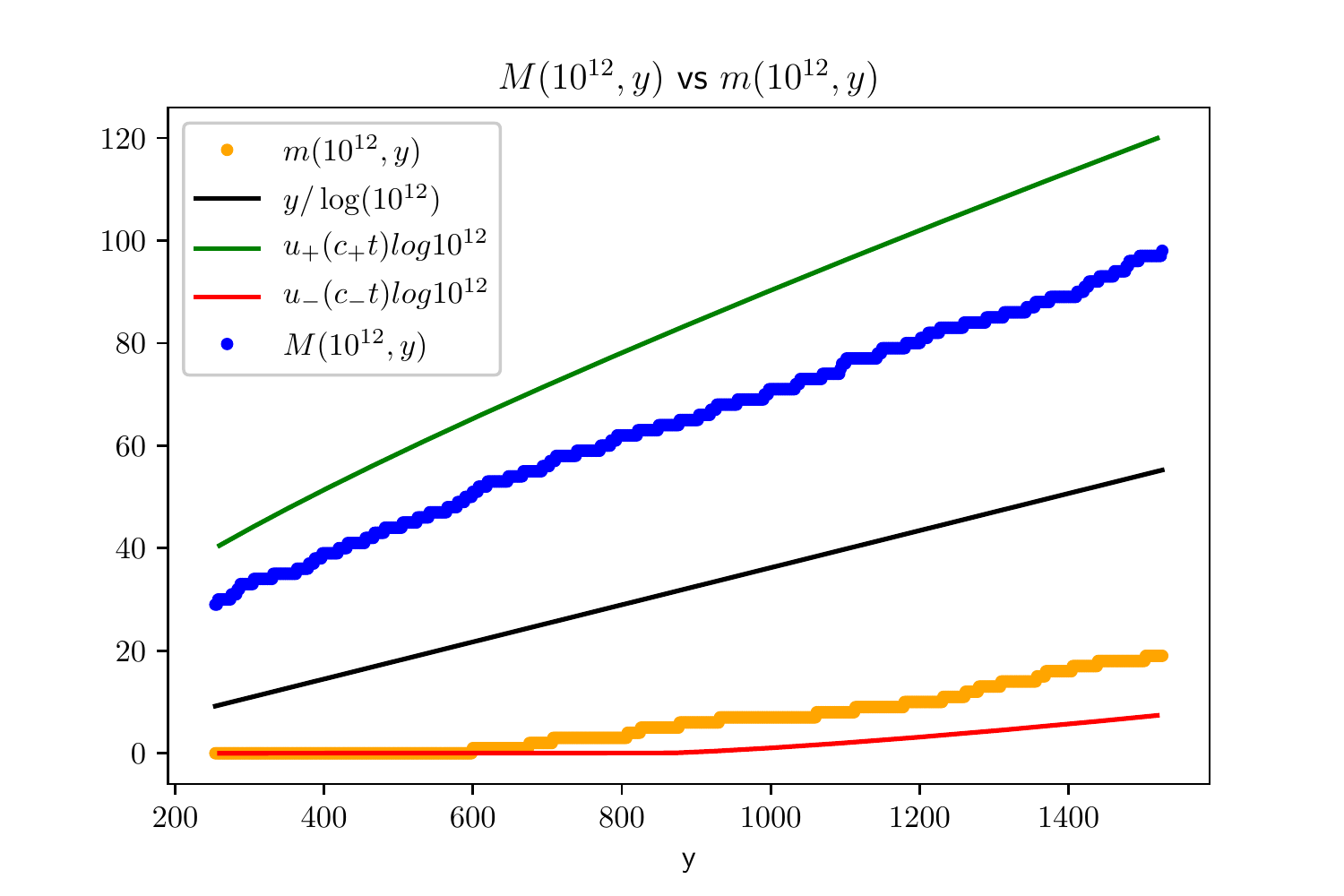}
 }
 \caption{ $u_-(c_-t)\log x$ vs $m(x,y)$ vs. $\tfrac y{\log x}$ vs. $M(x,y)$ vs $u_+(c_+t)\log x$
 \newline in ascending order,  where $y=t(\log x)^2$ for $x=10^k, k=9,\dots,12$  and
  \newline . \hskip1.6in $ \tfrac 13(\log x)^2 \leq   y\leq 2(\log x)^2$.}
\end{figure} 

We do not know what conclusions to draw from this data!

  
\subsection{Long intervals: $y/(\log x)^2\to \infty$}

We believe that there exist continuous functions $0<\sigma_-(A)<1<\sigma_+(A)$ for which
 $\sigma_-(A),\sigma_+(A)\to 1$ as $A\to \infty$, such that if $y/(\log x)^2\to \infty$ then
\begin{equation} \label{eq: logtotheAPredictions}
 m(x,y) \sim  \sigma_-(A) \frac y{\log x} \text{ and } M(x,y) \sim  \sigma_+(A) \frac y{\log x}
 \end{equation}
 writing $y=(\log x)^A$. Moreover we should take
 \[
 c_-=\sigma_-(2)  \text{ and } c_+=\sigma_+(2)
 \]
 above.
 We will obtain these conjectures from a discussion of sieve theory.  
 
 At first sight these conjectures seem  to be inconsistent with Selberg's result that 
 \[
 \pi(x+y)-\pi(x) \sim \frac y{\log x}
 \]
for almost all $x$, assuming that $y/(\log x)^2\to \infty$ (which he proved assuming the Riemann Hypothesis). However the 
``almost all'' in the statement allows for exceptions and in 1984, Maier \cite{Mai} exhibited, for all $A>2$,  constants $\delta_+(A) ,\delta_-(A)>0$ for which there is an infinite sequence of integers $x_+$ and $x_-$ with
\[
 m(x_-,y_-) \lesssim  \delta_-(A) \frac {y_-}{\log x_-} \text{ and } M(x_+,y_+) \gtrsim  \delta_+(A) \frac {y_+}{\log x_+} 
 \]
where $y_\pm=(\log x_\pm)^A$. As far as we know it could be that 
$ \sigma_-(A)=\delta_-(A) $ and $ \sigma_+(A)=\delta_+(A)$ for each $A$, as we will discuss in sections 2.2 and 3.

 \subsection{Another statistic}
 
 The data in sections 1.1 and 1.2 seem to support our conjectures for $M(x,y)$ in the range $y=o((\log x)^2)$, but the data in sections 1.3 and 1.4 for larger $y$ are less encouraging.  For this reason it seems appropriate to return to the question of how $M(x,y)$ grows as a function of $y$ in the range $y\asymp (\log x)^2$, and so we examine the ratio
 \[
r_+(x,y):= M(x,2y)/M(x,y).
 \]
 Our \emph{asymptotic predictions} suggest that this looks like $2+o(1)$ if $y\leq \frac 12 \log x$ and if $y/(\log x)^2\to \infty$, and $1+o(1)$ if
 $\log x\leq y=o((\log x)^2)$. For $y\asymp (\log x)^2$ our prediction for $M(x,y)$ is more complicated; indeed if $y=t(\log x)^2$ then we predict that this looks like
 \[
\rho_+(t):=  u_+(2c_+t)/u_+(c_+t)
 \]
and  we now compare this new statistic to the data:

\begin{figure}[H]\centering{
\includegraphics[scale=.5]{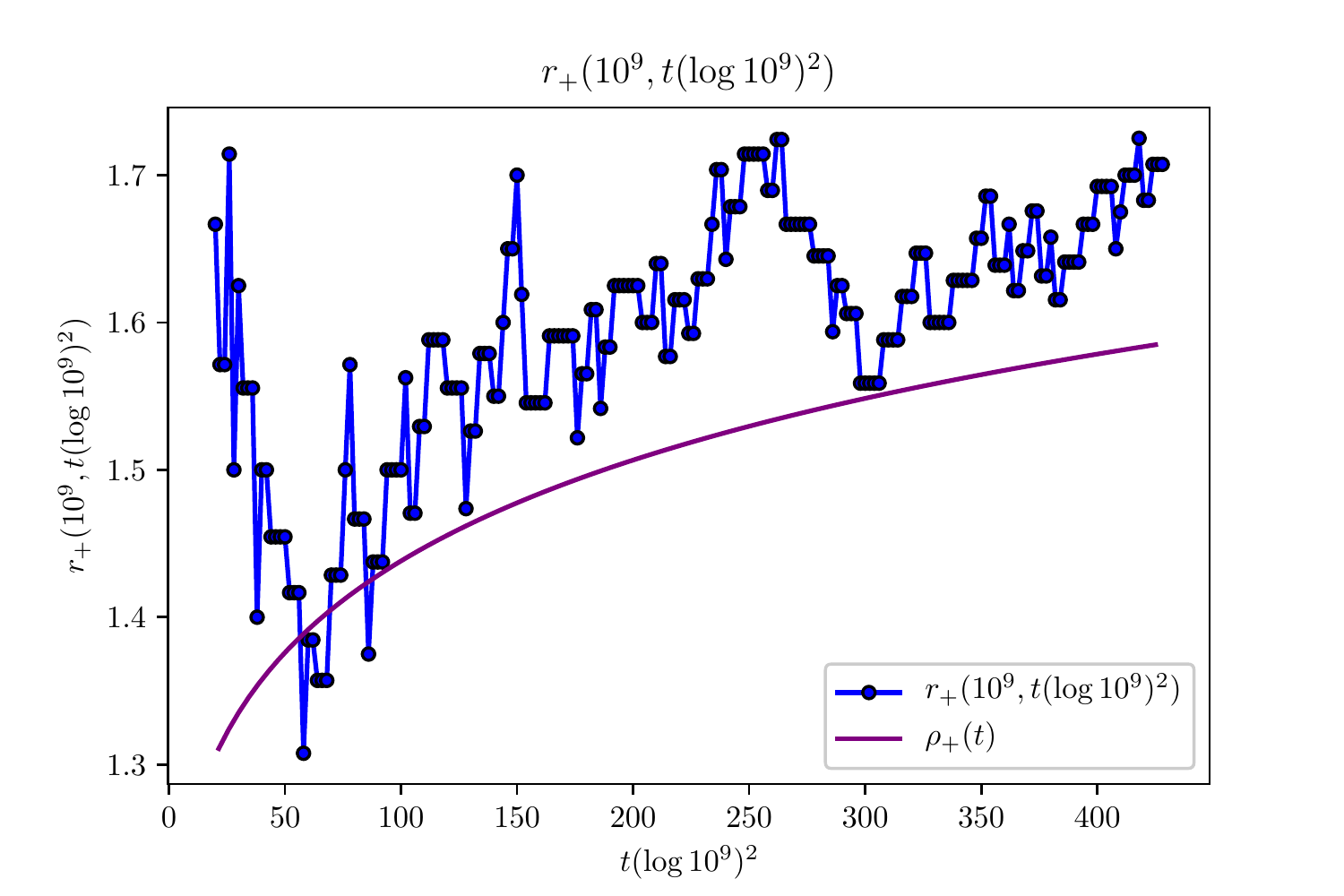}
\includegraphics[scale=.5]{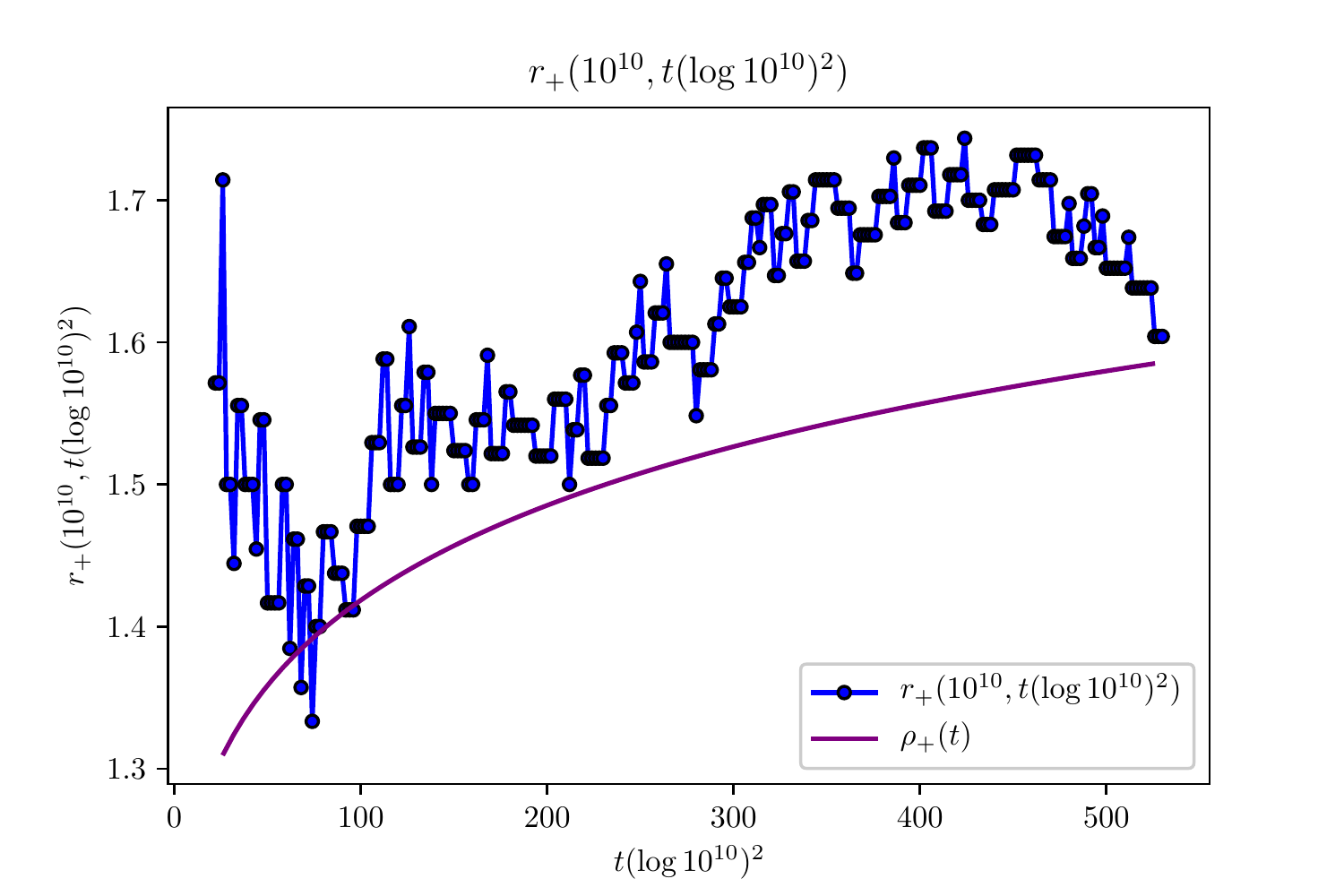}
\newline

\includegraphics[scale=.5]{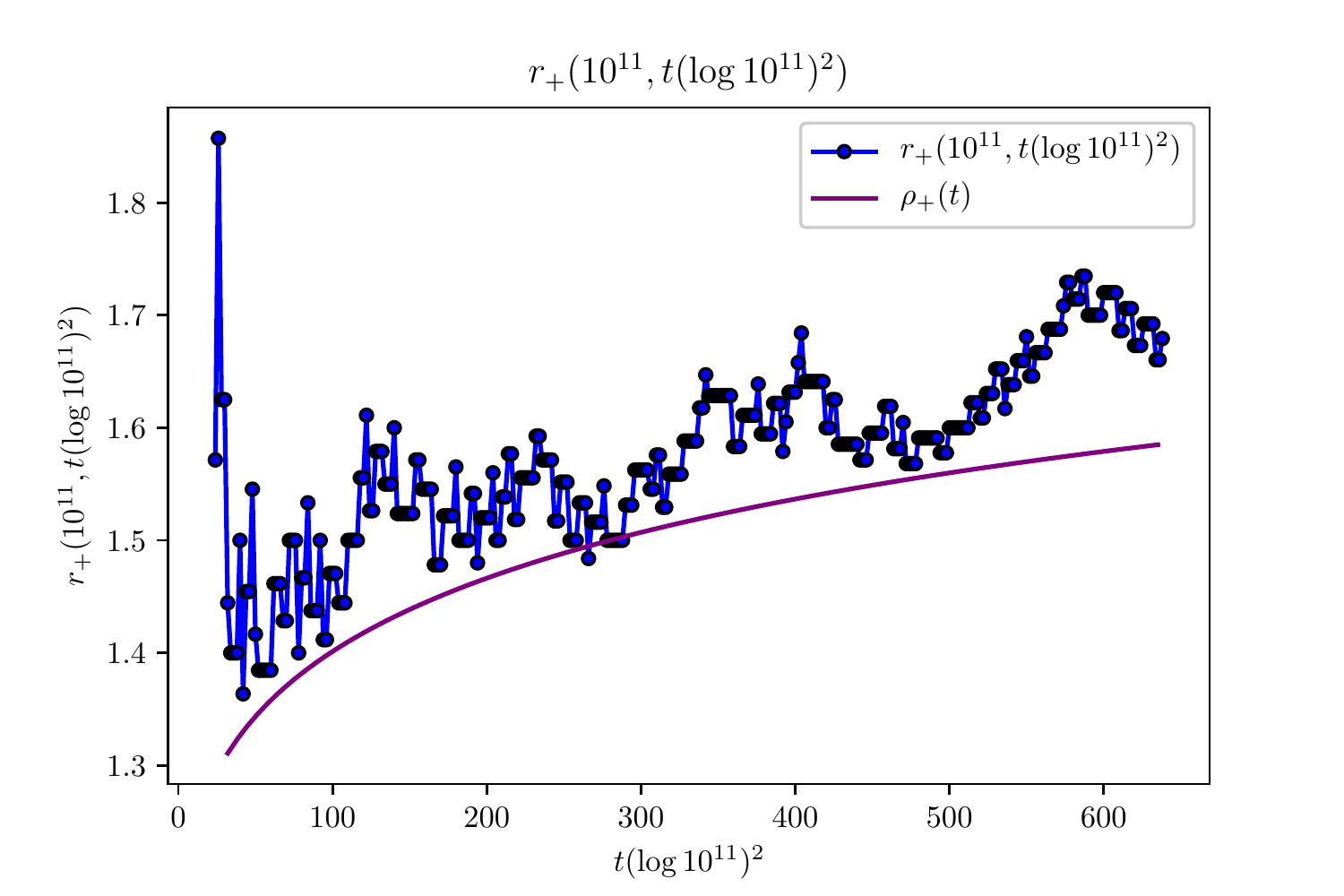}
\includegraphics[scale=.5]{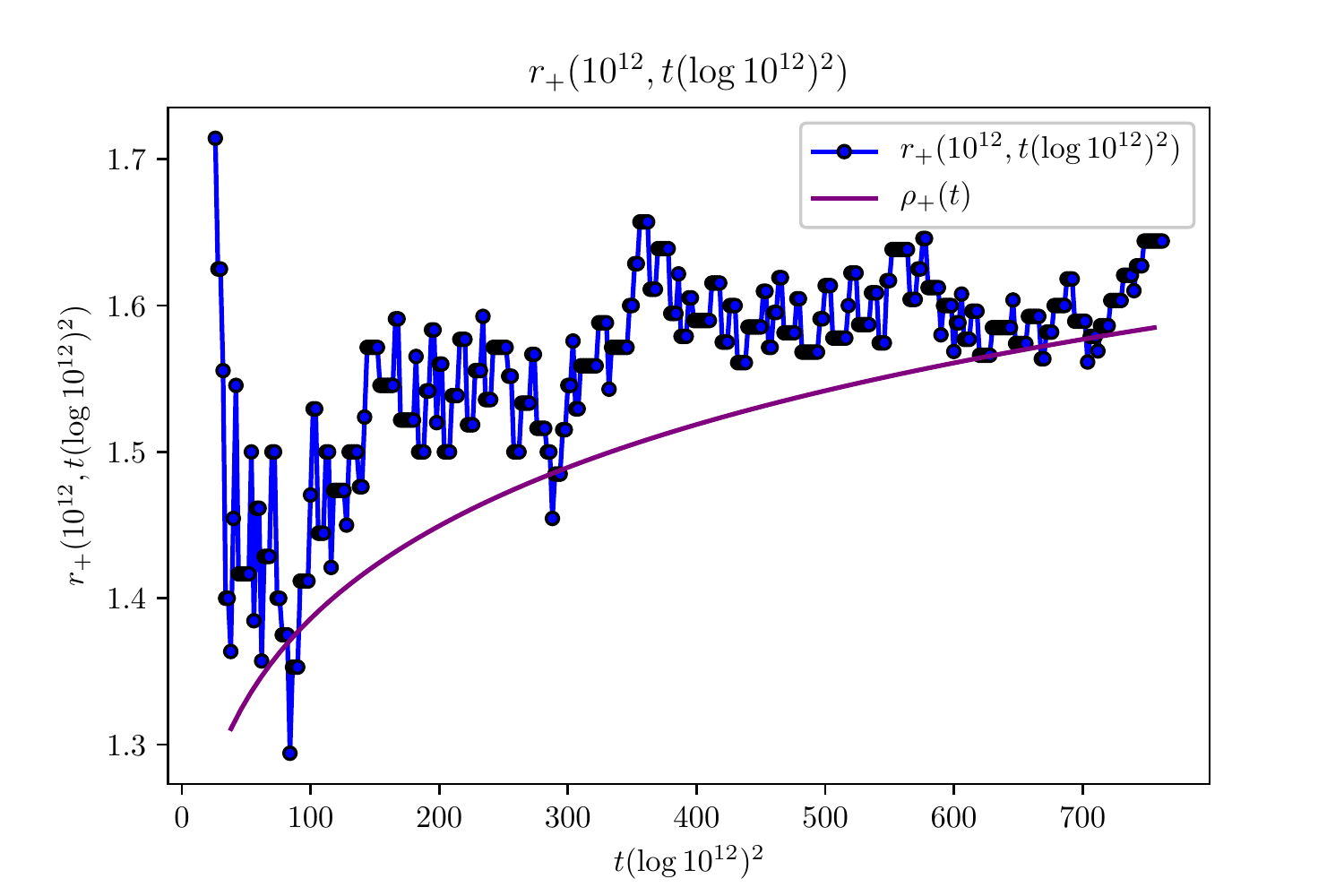} }
\caption{$M(10^k,2y)/M(10^k.y)$ for  $k=9,\dots,12$  and $ y\leq (\log (10^k))^2$.}
\end{figure} 


We can see the shape of our prediction looks correct but it is a little on the low side. What is encouraging is that the fit seems to get better as $k$ grows.

 \subsection{Summary of conjectures} We now recall in one place the conjectures given above:
 
 Fix $\epsilon>0$. If $x$ is sufficiently large and $y\leq (1-\epsilon)\log x$ then
 \[
 M(x,y)=S(y).
 \]
 A weaker conjecture claims if $y\leq (1-o(1))\log x$ and $y\to \infty$ as $x\to \infty$ then 
 \[
 M(x,y) \sim \frac y{\log y} .
 \]
If  $ \log x\leq y=o( (\log x)^2)$ then
\[
M(x,y) \sim L(x,y) \text{ where } L(x,y):=\frac {\log x}{\log \left(\tfrac{(\log x)^2} y\right) } .
\]
We conjecture that there exist constants $c_-,c_+>0$ such that if
 $y=t(\log x)^2$ then
\[
m(x,y)\sim u_-(c_-t) \log x \text{ and } M(x,y)\sim u_+(c_+t) \log x,
\]
and we even have tentative guesses about the values of $c_-$ and $c_+$. Moreover this suggests that 
\[
 \max_{x<p_n\leq 2x} p_{n+1}-p_n \sim c_-^{-1} ( \log x)^2.
 \]
Finally for any fixed $A>2$ we believe  that there exist continuous functions $ \sigma_-(A)<1<\sigma_+(A)$  such that if $y=(\log x)^A$  then
\[
 m(x,y) \sim  \sigma_-(A) \frac y{\log x} \text{ and } M(x,y) \sim  \sigma_+(A) \frac y{\log x}.
 \]
 
\section{Some historical comparisons}

\subsection{Best results known for small and large gaps between consecutive primes}
Following up the 2013 breakthrough by Yitang Zhang \cite{Zha} on small gaps between primes, Maynard \cite{May1} and Tao \cite{Tao}  proved that there are shortish intervals that contain $m$ primes for any fixed $m$. Their remarkable work implies that there exists a constant $c>0$ such that for each $y\geq 2$ we have
\[
M(x,y) \geq c \log y \text{ if } x \text{ is sufficiently large},
\]
which unfortunately is far smaller than what is conjectured here, in all ranges of $y$. However, before Zhang's work we could only say, for $y\ll \log x$, that $M(x,y)\geq 1$,  and after Zhang only that $M(x,y)\geq 2$, so these latest efforts are significant leap forward in our understanding.
\footnote{In \cite{May3} Maynard asks similar questions for integers that are the sum of two squares. He proved unconditionally the remarkably strong result that there are 
intervals $(X,X+y]$ which contain $\gg  \frac{y}{(\log x)^{1/2}} + y^{1/10}$ integers that are the sum of two squares for all $y\geq 1$.
 This is still  much smaller than what is probably the truth  for $y\ll (\log x)^c$  but it is at least  a power of $y$, as we might conjecture, so far closer to the truth than what is known unconditionally about primes.}

Similarly   Ford,   Green,  Konyagin,  Maynard and Tao \cite{FGM}, following up on \cite{FGK, May2}, recently showed that 
\[
m(x,y)=0 \text{ for some } y \gg \frac{\log x \log\log x \log\log\log\log x}{\log\log\log x} ,
\]
and they believe their technique (which consists of looking only at divisibility by small primes) can be extended no further than 
$y$ as large as $(\log x)(\log\log x)^{2+o(1)}$ which is far smaller than what is conjectured (here and previously).

\subsection{Unusual distribution of primes in intervals}
As discussed in section 1.5, Maier \cite{Mai} proved that there can be surprisingly few or many primes in an interval of length $(\log x)^A$ with $A>2$.  His proof can be easily modified to express his result in terms of certain sieving constants:
Define
\[
S(x,y,z) := \#\{ n\in (x,x+y]:\ (n,P(z))=1\}
\]
where $P(z):=\prod_{p\leq z} p$, and let  
\begin{align*}
S^+(y,z):= \max_{x}  S(x,y,z)
\text{ and } S^-(y,z): = \min_{x}  S(x,y,z).
\end{align*}
For each fixed $u\geq 1$ we define
\begin{align*}
\sigma_+(u) :&= \limsup_{z\to \infty} S^+(z^u,z)\bigg/  \bigg\{ \prod_{p\leq z}\bigg( 1-\frac 1p \bigg) \cdot  z^u \bigg\}  \\
\text{ and } \sigma_-(u) :&=  \liminf_{z\to \infty} S^-(z^u,z)\bigg/ \bigg\{  \prod_{p\leq z}\bigg( 1-\frac 1p \bigg) \cdot  z^u \bigg\}  .
\end{align*}
We will discuss what we know about the constants $\sigma_-(u)$ and $\sigma_+(u)$ in the next section, although we state here that we believe that both the limsup's and the liminf's are actually limits so that
\begin{equation} \label{eq: asymp for S's}
S^+(z^u,z) \sim \sigma_+(u)\prod_{p\leq z}\bigg( 1-\frac 1p \bigg) \cdot  z^u  \text{ and } 
S^-(z^u,z) \sim \sigma_-(u)\prod_{p\leq z}\bigg( 1-\frac 1p \bigg) \cdot  z^u.
\end{equation}

Maier's proof in \cite{Mai} can be modified to show that for $y=(\log x)^A$ and $z=\epsilon \log x$ we have 
\[
M(x,y) \geq \{ 1+o_{x\to \infty}(1)\}    S^+(y,z)  \cdot \frac {e^\gamma \log z}{\log x}
\]
which implies that there exist  arbitrarily large $x$ ($=x_+$) for which
\[
M(x,y)  \geq \{ 1+o(1)\}\sigma_+(A) \frac{y}{\log x} .
\] 
Analogously that there  are arbitrarily large $x$ ($=x_-$) for which
\[
m(x,y) \leq \{ 1+o(1)\} \sigma_-(A)  \frac{y}{\log x} .
\] 
If, as we believe, \eqref{eq: asymp for S's} holds then these estimates are  true for all $x$.
In \eqref{eq: logtotheAPredictions} we have conjectured that these bounds are ``best possible''; paraphrasing, we are postulating that Maier's observation about the effect of small prime factors is the key issue in estimating the extreme number of primes in intervals with lengths significantly longer than $(\log x)^2$. In fact our conjectures come from firstly sieving by small primes, and secondly looking at the tail probabilities of the binomial distribution that comes from a probabilistic model which takes account of divisibility by small primes.

  We will study in Appendix B how well some (relatively small) data for the full distribution compares to reality.


\section{Sieve methods and their limitations}

Let $\mathcal A$ be a set of integers  (of size $y$) to be \emph{sieved} (in our case the integers in the interval $(X,X+y]$), such that 
\[
\# \{ a\in \mathcal A:\ d|a\} = \frac{g(d)}d X + r(\mathcal A,d)
\]
where $g(d)$ is a multiplicative function, which is more-or-less $1$ on average over primes $p$  in short intervals (in our case each $g(p)=1$), and the error terms $r(\mathcal A,d)$ are small on average (in our case each   $|r(\mathcal A,d)|\leq 1$).  The goal in sieve theory is to give upper and lower bounds for
\[
S(\mathcal A,z) := \{ n\in \mathcal A:\ (n,P(z))=1\}.
\]
This equals $G(z)y$ ``on average'' where $G(z):= \prod_{p\leq z} (1 -\frac{g(p)}p)$.
In 1965, Jurkat and Richert \cite{JR} showed  that if $y=z^u$ then 
\begin{equation} \label{eq: JR theorem}
(f(u)+o(1))  \cdot G(z)y \leq S(\mathcal A,z)  \lesssim F(u) \cdot G(z)y    ,
\end{equation}
where $f(u)= e^\gamma(\omega(u) -\frac{\rho(u)}u)$ and $F(u)= e^\gamma(\omega(u) +\frac{\rho(u)}u)$, and
$\rho(u)$ and $\omega(u)$ are the Dickman-de Bruijn and Buchstab functions, respectively.
One can define these functions directly by
\[
f(u)=0 \text{ and }  F(u)=\frac{2e^\gamma}u \text{ for } 0<u\leq 2
\]
(in fact $F(u)=\frac{2e^\gamma}u$ also for $ 2<u\leq 3$) and
\[
 f(u)= \frac 1u \int_1^{u-1} F(t) dt   \text{ and }  F(u)=\frac {2e^\gamma} u  + \frac 1u \int_2^{u-1} f(t) dt   \text{ for all } u\geq 2.
\]
Iwaniec  \cite{Iw1} and Selberg \cite{Se1} showed that this result is ``best possible'' by noting that the sets
\[
\mathcal A^\pm =\{ n\leq x: \lambda(n)=\mp 1\}  
\]
 where $\lambda(n)$ is Liouville's function (so that $\lambda(\prod_p p^{e_p})= (-1)^{\sum_p e_p}$) satisfy the above hypotheses, with 
 \begin{equation} \label{eq: Selbergexample}
 S(\mathcal A^-,z) \sim f(u) \cdot G(z) \# \mathcal A^- \text{ and } S(\mathcal A^+,z) \sim F(u) \cdot G(z) \# \mathcal A^+.
 \end{equation}
 Since our question (bounding $S(x,y,z)$) is an example of this \emph{linear sieve} we deduce that 
 \[
 f(u) \leq \sigma_-(u) \leq 1 \leq \sigma_+(u) \leq F(u),
 \]
 and we expect that all of these inequalities are strict. However, in \cite{Gr3}, it is shown that if there are infinitely many ``Siegel zeros'',\footnote{That is, putative counterexamples to the Generalized Riemann Hypothesis, the most egregious that cannot be ruled out by current methods.} then,  in fact,
 \[
  \sigma_-(u) = f(u) \text{ and }  \sigma_+(u) =F(u) \text{ for all } u\geq 1.
 \]
 Given that eliminating Siegel zeros seems like an intractable problem for now, we are stuck. However in this paper we are allowed to guess at the truth, though we  know too  few interesting examples to even take an educated guess as to the true values of $\sigma_-(u) $ and $\sigma_+(u)$. It is useful   to note the following:

\begin{lemma}\label{lem: SieveBehaviour}
  $\sigma_+(u)$ is non-increasing, $\sigma_-(u)$ is non-decreasing,  and $\sigma_+(u), \sigma_-(u)\to 1$ as $u\to \infty$ 
\end{lemma}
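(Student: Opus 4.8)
The plan is to handle the two claims by completely different (and elementary) means: the convergence $\sigma_\pm(u)\to 1$ is essentially free from the sieve bounds already in hand, whereas the monotonicity follows from a ``covering'' argument that compares a long sieving window with the short ones tiling it. For the limits, recall we already have $f(u)\le\sigma_-(u)\le 1\le\sigma_+(u)\le F(u)$ for every $u\ge 1$; since it is classical that the linear sieve functions obey $f(u)\to 1$ and $F(u)\to 1$ as $u\to\infty$ (equivalently $\omega(u)\to e^{-\gamma}$ and $\rho(u)/u\to 0$), a squeeze gives $\sigma_\pm(u)\to 1$.

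For monotonicity, fix $1\le u_1<u_2$, set $\theta:=u_2-u_1>0$ and write $G(z):=\prod_{p\le z}(1-1/p)$. For the upper function: given $z$ and any $x$, cover the window $(x,x+z^{u_2}]$ by the $N:=\lceil z^{\theta}\rceil$ consecutive windows $(x+(j-1)z^{u_1},\,x+jz^{u_1}]$ with $1\le j\le N$, each of length $z^{u_1}$. Every $z$-rough integer in the long window lies in one of them, so
\[
S(x,z^{u_2},z)\ \le\ \sum_{j=1}^{N}S\bigl(x+(j-1)z^{u_1},\,z^{u_1},\,z\bigr)\ \le\ N\cdot S^+(z^{u_1},z).
\]
Maximizing over $x$ and dividing by $G(z)z^{u_2}$ gives
\[
\frac{S^+(z^{u_2},z)}{G(z)z^{u_2}}\ \le\ \frac{N}{z^{\theta}}\cdot\frac{S^+(z^{u_1},z)}{G(z)z^{u_1}},
\]
and since $N/z^{\theta}\to 1$ as $z\to\infty$ while the right-hand ratio stays bounded (by $F(u_1)+o(1)$, via Jurkat--Richert), taking $\limsup_{z\to\infty}$ yields $\sigma_+(u_2)\le\sigma_+(u_1)$. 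The lower function is symmetric: choosing $x^{\ast}$ that (nearly) attains $S^-(z^{u_2},z)$ and fitting $N':=\lfloor z^{\theta}\rfloor$ \emph{disjoint} windows of length $z^{u_1}$ inside $(x^{\ast},x^{\ast}+z^{u_2}]$ gives $S^-(z^{u_2},z)\ge N'\cdot S^-(z^{u_1},z)$, and dividing by $G(z)z^{u_2}$ and taking $\liminf_{z\to\infty}$ (using $N'/z^{\theta}\to 1$) yields $\sigma_-(u_2)\ge\sigma_-(u_1)$.

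Everything else is bookkeeping, and I expect no genuine obstacle. One should note that the windows have irrational length $z^u$, but this is immaterial since $S(x,y,z)$ is defined for real $y$; that the rounding factors $\lceil z^{\theta}\rceil/z^{\theta}$ and $\lfloor z^{\theta}\rfloor/z^{\theta}$ equal $1+O(z^{-\theta})$ and so tend to $1$; and that pushing the $\limsup$/$\liminf$ through these factors is legitimate precisely because they converge to $1$ and the normalized counts are uniformly bounded above (again by Jurkat--Richert). The only thing the covering argument does \emph{not} give on its own is the exact value of the limits — it shows $\sigma_+(u)\downarrow L_+\ge 1$ and $\sigma_-(u)\uparrow L_-\le 1$ — so the classical facts $f(u),F(u)\to 1$ genuinely carry the weight in identifying $L_\pm=1$.
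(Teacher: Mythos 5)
Your proof is correct. The monotonicity argument is essentially identical to the paper's: the paper takes the $x$ at which $S^+(z^B,z)$ is attained, partitions $(x,x+z^B]$ into $z^{B-A}$ subintervals of length $z^A$, and uses the fact that the maximum of the subinterval counts is at least their average; your covering inequality $S(x,z^{u_2},z)\le N\cdot S^+(z^{u_1},z)$ is the same pigeonhole step written multiplicatively, and your extra care with the rounding factors $\lceil z^{\theta}\rceil/z^{\theta}\to 1$ is a harmless tightening of a point the paper glosses over. Where you genuinely diverge is the convergence $\sigma_{\pm}(u)\to 1$: the paper invokes the fundamental lemma of the small sieve, which gives $S(x,z^u,z)=\{1+O(u^{-u})\}\prod_{p\le z}(1-\tfrac 1p)z^u$ and hence the quantitative rate $\sigma_{\pm}(u)=1+O(u^{-u})$, whereas you squeeze between the Jurkat--Richert bounds $f(u)\le\sigma_-(u)\le 1\le\sigma_+(u)\le F(u)$ (already recorded in Section~3) and the classical limits $f(u),F(u)\to 1$. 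Both are legitimate; the paper's route is more self-contained within sieve folklore and yields the superexponential rate for free, while yours leans only on facts the paper has already stated and correctly isolates that the covering argument alone can only show the monotone limits $L_{\pm}$ exist, not identify them as $1$.
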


\begin{proof} [Proof of Lemma \ref{lem: SieveBehaviour}]
Select $x$ so that $S(x,z^B,z)=S^+(z^B,z)$   is attained. For $A<B$, partition the interval $(x,x+z^B]$ into 
$z^{B-A}$ disjoint subintervals of length $z^A$, and select the subinterval with $ \#\{ n\in (X,X+z^A]:  (n,P(z))=1\}$ maximal. Therefore
\begin{align*}
S^+(z^A,z)&\geq  \max_{\substack{X=x+jz^A\\ 0\leq j\leq z^{B-A}-1}}   \#\{ n\in (X,X+z^A]:  (n,P(z))=1\} \\ &\geq 
\frac 1{z^{B-A}} \#\{ n\in (x,x+z^B]:  (n,P(z))=1\} = \frac {S^+(z^B,z)}{z^{B-A}},
\end{align*}
so that $\sigma_+(A)\geq \sigma_+(B)$. The analogous proof, with the inequalities reversed, yields the result for $\sigma_-$.

The fundamental lemma of the small sieve (see, eg, \cite{FI}) gives that 
\[
S(x,z^u,z) =\{ 1+O(u^{-u}) \} \prod_{p\leq z}\bigg( 1-\frac 1p \bigg) \cdot  z^u
\]
so that $\sigma_+(u), \sigma_-(u)=1+O(u^{-u})=1+o_{u\to \infty}(1)$.
\end{proof}

\subsection{Best bounds known}
In Maier's paper he used the  well-known fact that for all $u\geq 1$,
 \[
    \#\{  n\leq  z^u:\ (n,P(z))=1\} \sim  \omega(u) \frac{z^u}{\log z}  
 \]
where $\omega(u)$ is the \emph{Buchstab function}, defined by $\omega(u)=\frac 1u$ for $1\leq u\leq 2$, and
$(u\omega(u))'=\omega(u-1)$ for all $u\geq 2$.  By Lemma \ref{lem: SieveBehaviour} we have
\[
\sigma_+(A) =\max_{B\geq A}\sigma_+(B) \geq e^{\gamma} \max_{B\geq A} \omega(B), 
\]
 and, similarly, $\sigma_-(A)  \leq e^{\gamma} \min_{B\geq A} \omega(B)$.   
For all we know, it could be that  
\[
\sigma_+(A) = e^{\gamma} \max_{B\geq A} \omega(B)  .
\]
 That is, it could be that the most extreme example of sieving an interval, $S(x,z^A,z)$, occurs where $|x|< z^{O(1)}$, that is when $x$ is very small, but  there is little evidence that there are no other intervals with even more extreme behaviour. 
 
 In \cite{MS}, Maier and Stewart noted one could obtain smaller upper bounds for $\sigma_-(A)$ for small $A$. Their idea was to construct a sieve based on the ideas used to prove that there are long gaps between primes: Fix $2>u>1$. One first sieves the interval $[1,x]$ where $x=z^u$ with the primes in $(z^{1/v},z]$ where $1\leq v\leq \frac 1{u-1}$. The integers left are the $z^{1/v}$-smooth integers up to $x$, and the integers 
 of the form $mp\leq x$ for some prime $p\in (z,x]$ (note that $m\leq x/p<x/z=z^{u-1}\leq z^{1/v}$). The number of these is
 \[
 \psi(z^u,z^{1/v})  + \sum_{z<p\leq x} \left[ \frac xp \right] \lesssim x \rho(uv)+ x \sum_{z<p\leq z^u} \frac 1p \sim x(\rho(uv)+\log u).
 \]
 Next we sieve ``greedily'' with the primes $\leq z^{1/v}$ so that the number of integers left is
 \[
 \lesssim \prod_{p\leq z^{1/v}}\bigg( 1-\frac 1p \bigg)\cdot x(\rho(uv)+\log u) \sim  v(\rho(uv)+\log u)  \frac{e^{-\gamma} x}{\log z} 
 \]
We now select $v=v_u\in [1,\frac 1{u-1}]$ to minimize $r_u(v):=v(\rho(uv)+\log u)$. Since
\[ 
r_u(v)'=\rho(uv)+\log u +uv\rho'(uv)=\rho(uv)+\log u -\rho(uv-1) ,
\]
we select $v_u$ so that $r_u'(v_u)=0$. 
If $u=1+1/\Delta$ with $1/\Delta=o(1)$ then  
\[
v_u\sim \frac{\log \Delta}{\log\log\Delta}    \text{ and so }  r_u(v_u)\sim \frac{\log \Delta}{\Delta \log\log\Delta}. 
\]
On the other hand if we use the Buchstab function then we cannot obtain a constant smaller than $e^\gamma/2$.   Thus for $1\leq A\leq 2$, we  have
\[
\sigma_-(A) \leq \min\{  e^\gamma/2,  r_A(v_A)  \}
\]
In \cite{MS} this argument is extended to show that $r_A(v_A)$ is the minimum exactly when $1\leq A\leq 1.50046\dots$.
Unfortunately we are only really interested in $\sigma_-(A) $ for $A\geq 2$ in this article.

Now $\omega'(u)$ changes sign in every interval of length 1, so $\omega(u)$ has lots of minima and maxima, which occur whenever $\omega(u)=\omega(u-1)$ (since $u\omega'(u)=\omega(u-1)-\omega(u)$). Nonetheless its global minimum occurs at $u=2$  so that  $\sigma_-(2) \leq e^{\gamma} \omega(2)=\frac {e^\gamma}2$ (and we saw earlier that the linear sieve bounds give $\sigma_-(2)\geq 0$). We are most interested in $\sigma_+(2)$, which is bounded below by $e^{\gamma} \max_{B\geq 2} \omega(B)$.
This maximum occurs at $B\approx 2.75$ with $\omega(B)\approx 0.57$, so that $\sigma_+(2)\geq 1.015\dots $ (and we saw earlier that 
the linear sieve bounds give  $\sigma_+(2)\leq e^\gamma=1.78107\dots$)

In section 1.3 we have 
\[
c_+=\sigma_+(2)
\]
 and took this to be equal to $1.015\dots $ in our computations as this is the best lower bound known on $\sigma_+(2)$. Similarly in section 1.4 we have 
 \[
 c_-=\sigma_-(2)
 \]
  and took this to be equal to $\frac {e^\gamma}2 $ in our model as this is the best upper bound known on $\sigma_-(2)$.  It could be that these are equalities, but there is little evidence either way.

 \section{Very short intervals ($y\leq \log x$)}

If a set of integers $A$ is  inadmissible then  there exists a prime $p$ which divides   $n+a$ for some $a\in A$, for each integer $n$, and so obstructs these from all being simultaneously prime, once $n$ is sufficiently large. On the other hand,  Hardy and Littlewood's prime $k$-tuplets conjecture \cite{HL} states that if $A$ is an admissible set then there are infinitely many integers $n$ for which $n+a$ is prime for every $a\in A$, and this seems to be supported by an accumulation of evidence.

We are interested in $\pi(n,n+y]$,  the number of primes in intervals $(n,n+y]$ of length $y$ (with $y$ small compared to $n$), particularly the minimum,
$m(x,y)$, and the maximum, $M(x,y)$, as $n$ varies between $x$ and $2x$. If the primes in $(n,n+y]$ are $\{ n+a: a\in A\}$ with $n>y$, then $A$ is an admissible set, say of size $k$, and therefore 
\[
\pi(n,n+y]:= \pi(n+y)-\pi(n) = k \leq S(y),
\]
where $S(y)$ is the maximum size of an admissible set $A$ of length $y$. Moreover this implies that if the prime $k$-tuplets conjecture holds then
\[
\max_{n\geq y} \pi(n,n+y] = S(y).
\]
How large is $S(y)$?  One can show that the primes in $(y,2y]$ yield an admissible set and so $S(y)\gtrsim \frac y{\log y}$ (by the prime number theorem). It is believed that 
\[
S(y)\sim \frac y{\log y}
\]
but the best upper bound known is $S(y) \lesssim \frac {2y}{\log y}$ (by the upper bound in \eqref{eq: JR theorem}), and this upper bound seems unlikely to be significantly improved in the foreseable future (as we again run into the Siegel zero obstruction).
Calculations support the believed size of $S(y)$.  One interesting theorem, due to Hensley and Richards, is that if $y$ is sufficiently large then $S(y)>\pi(y)$ and so, if the prime $k$-tuplets conjecture is true then for all sufficiently large $y$ there exist infinitely many intervals of length $y$ that have more primes than the initial interval $[1,y]$.  The known values of $S(y)$ and bounds, can be found on \texttt{http://math.mit.edu/$\sim$primegaps/} and from there we see that $S(3432)\geq 481> \pi(3432)=480$. Therefore we believe that there are infinitely many intervals of length $3432$ containing exactly 481 primes, more than the
480 primes $\leq 3432$ found at ``the start''. However, finding such an interval (via methods based on this discussion) involves finding a prime $481$-tuple, which would be an extraordinary challenge unless one is very lucky.

So assuming the prime $k$-tuplets conjecture we know that $\max_{n\geq y} \pi(n,n+y] = S(y)$ for fixed $y$, and we might expect that $M(x,y) = S(y)$ for $y$ which (slowly) grows with $x$. In sections 4.1 and 8.1 we present two quite different heuristics to suggest that 
\begin{equation} \label{eq: VSmall.Prediction}
M(x,y) = S(y) \text{ for all } y\leq \{ 1-o(1)\} \log x;
\end{equation}
and we saw, in section 2.1, that this is well supported by the data that we have.

By a simple sieving argument Westzynthius showed in the 1930s that for any constant $C>0$ there exist intervals $[x,x+C\log x]$ which do not contain any primes. This argument is easily modified to show that for any $c>0$
\[
m(x,c\log x):=\min_{X\in (x,2x]} (\pi(X+c\log x)-\pi(X)) =0 \text{ if }  x \text{ is sufficiently large.}
\]

We will give two theoretical justifications for our prediction \eqref{eq: VSmall.Prediction}, supporting the conclusions we have drawn from the data represented in the graphs above. The first is explained in the next section and relies on guessing at what point a given admissible set yields roughly as many prime $k$-tuplets as conjectured. The second a more traditional approach is explained in section 8.1, developing the Gauss-Cram\'er heuristic (given in section 6) so that it takes account of divisibility by small primes.

\subsection{An explicit prime $k$-tuplets conjecture}

For a given admissible set of linear forms $b_jn+a_j,\ j=1,\dots, k$, Hardy and Littlewood \cite{HL} conjectured that 
\begin{equation} \label{eq: kTuplets}
\#\{ x<n\leq 2x:\ \text{ Each } b_jn+a_j \text{ is prime}\} \sim \prod_p \bigg( 1-\frac 1p \bigg)^{-k}  \bigg( 1-\frac {\omega(p)}p \bigg) \cdot \frac x{(\log x)^k} ,
\end{equation} 
where $\omega(p)$ is the number of $n\pmod p$ for which $p$ divides $\prod_{j=1}^k (b_jn+a_j)$.\footnote{Here \emph{admissible} can be defined to be those $k$-tuples for which every $\omega(p)<p$. A set $A$ is admissible if and only if the set $\{ n+a: a\in A\}$ of linear forms is admissible.} We wish to know for what $x$ are the two sides of \eqref{eq: kTuplets} equal up to a small factor, and for what $x$ can we obtain a good lower bound on the right-hand side.

This conjecture is known to be true as $x\to \infty$ for  $k=1$ (where we may assume that $1\leq a\leq b-1$). 
There is a lot of data on primes in arithmetic progression and these all suggest that \eqref{eq: kTuplets} holds uniformly for all $x\geq b^{\epsilon}$ for any fixed $\epsilon>0$.\footnote{Surprisingly there is no way known to try to prove this. The best we know how to obtain, assuming the Generalized Riemann Hypothesis, is that if $k=1$ then \eqref{eq: kTuplets} holds for all $x\geq b^{1+\epsilon}$, though this can be obtained ``on average'' unconditionally.   Linnik's Theorem implies that there exists a constant $\lambda$ such that one can obtain a lower bound on the left-hand side of \eqref{eq: kTuplets} once $x\gg b^\lambda$ (and so there is a prime $\ll b^{\lambda+1}$ in each reduced residue class mod $b$). In 2011, Xylouris \cite{Xyl} showed that we can take $\lambda=4$, the smallest $\lambda$ known-to-date}

Let $A$ be an admissible set of size $k=S(y)\sim \frac {\alpha y}{\log y}$ (where we believe $\alpha=1$), a subset of the positive integers $\leq y$. Since there are $\ll \frac y{(\log y)^2}$ integers in $S(y)$ that are $<\frac y{\log y}$ (by the sieve), we deduce that $Q:=\prod_{a\in A} a = e^{(\alpha+o(1))y}=k^{(1+o(1))k}$. 
Now $\omega(p)=k$ for all $p\geq y$ (since no two elements of $A$ can be in the same congruence class mod $p$), 
so that
\[
\prod_{p>y} \bigg( 1-\frac 1p \bigg)^{-k}   \bigg( 1-\frac {\omega(p)}p \bigg)  = \prod_{p>y} \bigg( 1-\frac 1p \bigg)^{-k}   \bigg( 1-\frac {k}p \bigg)  = e^{-o(k^2/y)}.
\]
Otherwise $1\leq \omega(p)\leq \min \{ k,p-1\} $ so that 
\[
e^{o(k)} = \bigg( \frac{\log 2y}{\log k}  \bigg)^k \gg \prod_{y\geq p>k} \bigg( 1-\frac 1p \bigg)^{-k}   \bigg( 1-\frac {\omega(p)}p \bigg)  \geq e^{-o(k)}.
\]
For the primes $\leq k$ we have $p-1\geq \omega(p)\geq 1$ and so
\[
1\geq \prod_{p\leq k}     \bigg( 1-\frac {\omega(p)}p \bigg)  \geq 1\bigg/ \prod_{p\leq k} p=e^{-k+o(k)} .
\]
Therefore, by Mertens' theorem, we have
\[
\prod_p \bigg( 1-\frac 1p \bigg)^{-k}  \bigg( 1-\frac {\omega(p)}p \bigg)  = (e^{O(1)} \log k)^{k}.
\]
So there exists a constant $C>0$ such that the right-hand side of \eqref{eq: kTuplets} is $\geq 1$ when
$(C\log k)^{k}x>(\log x)^k$. This certainly happens when $x=k^{ck}$ for any fixed $c>1$; that is, $x>Q^{1+\epsilon}$. One might guess that there is an error term in \eqref{eq: kTuplets} of size $x^{1/2+o(1)}$, in which case we must take $c>2$,  that is $x>Q^{2+\epsilon}$, to guarantee that the left-hand side of \eqref{eq: kTuplets} is positive.

Now if $\#\{ x<n\leq 2x:\ \text{ Each } n+a \text{ is prime, for each } a\in A\}\geq 1$ then $M(x,y)=S(y)$. From the above we might guess this holds when
$x>Q^{1+\epsilon}$ where $Q=e^{(1+o(1))y}$; that is, if $y\leq (1-o(1))\log x$. Indeed we only need the above heuristic discussion to be roughly correct ``on average'' over all such admissible sets, to support the conjecture in \eqref{eq: VSmall.Prediction}.\footnote{This reasoning suggests that even if we are  pessimistic then we would simply change the range in \eqref{eq: VSmall.Prediction} to $y\leq (c+o(1))\log x$ for some constant $c\in (0,1)$.}

\section{Cram\'er's heuristic} 

Gauss noted from calculations of the primes up to 3 million, that the density of primes at around $x$ is about $\frac 1{\log x}$. Cram\'er used this as his basis for a heuristic to make predictions about the distribution of primes: Consider an infinite sequence of independent random variables $(X_n)_{n\geq 3}$ for which
\[
\text{Prob}(X_n=1) = \frac 1{\log n} \text{ and } \text{Prob}(X_n=0) =1- \frac 1{\log n}.
\]
    By determining what properties are true with probability $1+o(1)$ for the sequence of $0$'s and $1$'s given by $X_3,X_4,\dots$, Cram\'er suggested   that such properties must also be true of the sequence $1,0,1,0,1,0,0,0,1,\dots$ of $0$'s and $1$'s which is characteristic of the odd prime numbers. For example, if $N$ is sufficiently large then
 \[
 S_N:=\sum_{n=3}^N X_n
 \]
 has mean $\int_2^N \frac{dt}{\log t}+O(1)$ and roughly the same variance, which suggests the conjecture that 
 $\pi(N)=\int_2^N \frac{dt}{\log t}+O(N^{1/2+o(1)})$; it is known that  this conjecture is equivalent to the Riemann Hypothesis. So for this particular statistic, Cram\'er's heuristic makes an important prediction and it can be applied to many other problems to make equally suggestive predictions.
 
  However Cram\'er's heuristic does have an obvious flaw: Since it treats all the random variables as independent, we have   $\text{Prob}(X_n=X_{n+1}=1)\approx \frac 1{(\log n)^2} $, so that 
 \[
 \ex \bigg( \sum_{n=3}^{N-1} X_nX_{n+1}  \bigg)= \int_2^N \frac{dt}{(\log t)^2}+O(N^{1/2+o(1)})
 \]
 with probability $1+o(1)$,
 which, Cram\'er's heuristic suggests, implies that there are infinitely many prime pairs $n,n+1$. But we have seen this is not so as $\{ 0,1\}$ is an inadmissible set.  More dramatically this heuristic would even suggest that $M(x,y)=y$ for all values of $y\leq \{ 1+o(1)\} \log x$. From the previous section we know  that this is false because $M(x,y)\leq S(y)$, as every $\pi(n,n+y]$ is restricted by those integers that are divisible by ``small''  primes, that is primes $\leq y^{1+o(1)}$.  This heuristic also suggests that the primes are equi-distributed amongst all of the  residue classes modulo a given integer $q$, rather than just the reduced classes.
 
 It therefore makes sense to modify Cram\'er's probabilistic model for the primes to take account of divisibility by ``small'' primes. The obvious way to proceed is to begin by sieving out the integers $n$ that are divisible by a prime $p\leq z$ (perhaps with $z=y$), and then to apply an appropriate modification of Cram\'er's model to the remaining integers, that is the integers that have no prime factor $\leq z$. The number of such integers up to $x$ is 
 \[
 \sim   \kappa x \text{ where } \kappa=\kappa(z):=\prod_{p\leq z} \bigg( 1 - \frac 1p \bigg) 
 \]
 if $z=x^{o(1)}$,
 and so the density of primes amongst such integers is $\frac 1{\kappa \log x}$.  We therefore proceed as follows:

 Define $P=P(z):=\prod_{p\leq z} p$ so that  $\kappa(z)=\frac{\phi(P)}{P}$. We consider
  an infinite sequence of independent random variables $(X_n)_{n\geq 3}$ for which $X_n=0$ if $(n,P)>1$; and 
\[
\text{Prob}(X_n=1) = \frac 1{\kappa \log n} \text{ and } \text{Prob}(X_n=0) =1- \frac 1{\kappa \log n} \text{ if }  (n,P)=1.
\]
With this model we can again accurately predict the prime number theorem (and the Riemann Hypothesis), as well as asymptotics for primes in arithmetic progressions,  for prime pairs, and even for admissible prime $k$-tuplets (with $k\leq z$). Moreover,   this will allow us to obtain our predictions  for maximal and minimal values of $\pi(x,x+y]$ (including the prediction for $y\ll \log x$ that we already deduced from assuming enough uniformity in the prime $k$-tuplets conjecture in section 4.1).

If $n\in (x,2x]$ with $(n,P)=1$  then  $\text{Prob}(X_n=1) = \frac 1{L}+O( \frac 1{L\log x})$ where $L:=\kappa \log x$, so for convenience we will work with a model where each $\text{Prob}(X_n=1) = \frac 1{L}$. There are, say, $N$ integers in $(X,X+y]$ that are coprime to $P$ where, a priori, $N$ could be any number between $0$ and $y$ (though we can refine that to $0\leq N\leq S^+(y,z) \ll \frac y{\log z}$ by the sieve). We now develop a model where $L$ and $N$ are fixed:

\section{The maxima and minima of a binomial distribution} \label{sec: maxmin}
Suppose that we have a sequence of  independent, identically distributed random variables $X_1,\dots,X_N$ with
 \[
 \mathbb{P}(X_n=1) = \frac 1L \text{ and }  \mathbb{P}(X_n=0) =1- \frac 1L,
 \]
 where $L$ is large. Let
 \[
 \mathbb Y:=\sum_{n\leq N} X_n.
 \]
 Then $ \mathbb Y$ is a \emph{binomially distributed random variable}, which is often denoted $B(N,\tfrac 1L)$.

 \begin{prop} \label{prop: k-values} Suppose that $N\ll L\log x$, and that $L\to \infty$ as $x\to \infty$.
If $k_-=k_-(N,L,x)$ is the largest integer for which
 \[
 \mathbb{P} (    \mathbb Y < k_- )  \leq \frac 1{x}
 \]
 then
 \[
 k_- = \begin{cases} \, \qquad 0 &\text{ if } N\leq  \{ 1+o(1)\} L\log x;\\
 \{ \delta_-(\lambda)+o(1)\} \tfrac NL &\text{ if } N=  \{ \lambda+o(1)\} L\log x \text{ with } \lambda>1;\\
 \end{cases}
 \]
 where $\delta_-=\delta_-(t)$ is the smallest positive solution to  $ \delta    (\log \delta-1)+1 =1/t$.
 
 If $k_+=k_+(N,L,x)$  is the smallest integer  for which
 \[
  \mathbb{P} (    \mathbb Y \geq k_+ )  \leq \frac 1{x}.
 \]
 then
 \[
 k_+= \begin{cases} \, \qquad N &\text{ if }     N\leq   \frac{\log x}{\log L} ;\\
     \{ 1+o(1)\}  \frac {\log x}{ \log \big(\tfrac{L\log x}{N}  \big) } & \text{ if }       \frac{\log x}{\log L} \leq  N=o(L\log x);\\
 \{ \delta_+(\lambda)+o(1)\} \tfrac NL &\text{ if } N=  \{ \lambda+o(1)\} L\log x \text{ with } \lambda>0;\\
 \end{cases}
 \]
 where $\delta_+=\delta_+(t)$ is the largest positive solution to  $ \delta    (\log \delta-1)+1 =1/t$.
 We observe that $k_-\leq k_+\ll \log x$ if $N\ll L\log x$.
   \end{prop}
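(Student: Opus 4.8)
The plan is to reduce both assertions to a single estimate for the exponential rate of decay of the tails of $\mathbb{Y}=B(N,1/L)$, and then to solve the resulting transcendental equation. Write $m:=N/L$ for the mean. Applying Stirling's formula to the point probabilities $\mathbb{P}(\mathbb{Y}=k)=\binom{N}{k}L^{-k}(1-1/L)^{N-k}$ — using $N!/(N-k)!=N^k\exp(-k^2/2N+O(k^3/N^2+k/N))$, $k!=(k/e)^k e^{O(\log k)}$, and $(1-1/L)^{N-k}=\exp(-m+k/L+O(N/L^2))$ — one gets
\[
\log\mathbb{P}(\mathbb{Y}=k)=-\bigl(k\log\tfrac{k}{m}-k+m\bigr)+\mathrm{Err},\qquad \mathrm{Err}=O\!\left(\tfrac{k^2}{N}+\log k+\tfrac{k}{L}+\tfrac{N}{L^2}\right).
\]
The first task is to verify $\mathrm{Err}=o(\log x)$ uniformly for $k$ in the ranges below; this uses $L\to\infty$, $N\ll L\log x$, and (a posteriori) $k\ll\log x$, and is genuinely delicate in the middle regime for $k_+$, where $N$ may be as small as $\log x/\log L$ but then $k$ is correspondingly small, so that $k^2/N$ stays $o(\log x)$.

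For the upper tail, the ratio of consecutive point probabilities is $\mathbb{P}(\mathbb{Y}=j+1)/\mathbb{P}(\mathbb{Y}=j)=\tfrac{N-j}{(j+1)(L-1)}\sim m/j$, so once $k$ exceeds $m$ by more than a fixed factor this ratio is bounded away from $1$ and $\mathbb{P}(\mathbb{Y}\ge k)$ is comparable, up to a bounded factor, to $\mathbb{P}(\mathbb{Y}=k)$ (equal to it up to $1+o(1)$ once $k/m\to\infty$; alternatively a Chernoff bound gives the matching upper bound with the same exponent). Equating $\mathbb{P}(\mathbb{Y}\ge k)$ to $1/x$ then amounts to solving $k(\log(k/m)-1)+m=\{1+o(1)\}\log x$. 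When $N=\{\lambda+o(1)\}L\log x$ with $\lambda>0$, substituting $k=\delta m=\delta\lambda\log x$ turns this into $\lambda[\delta(\log\delta-1)+1]=1+o(1)$, i.e. $\delta(\log\delta-1)+1=1/\lambda$; since $g(\delta):=\delta(\log\delta-1)+1$ decreases from $1$ to $0$ on $(0,1)$ and increases to $\infty$ on $(1,\infty)$, the \emph{smallest} integer $k_+$ with $\mathbb{P}(\mathbb{Y}\ge k_+)\le1/x$ corresponds to the larger root $\delta_+(\lambda)>1$, giving $k_+=\{\delta_+(\lambda)+o(1)\}N/L$. When $\log x/\log L\le N=o(L\log x)$ the mean $m=o(\log x)$ is negligible against the main term, and with $w:=\log x/m=L\log x/N\to\infty$ the equation becomes $(k/m)(\log(k/m)-1)=\{1+o(1)\}w$, whose solution is $k/m=\{1+o(1)\}w/\log w$, i.e. $k_+=\{1+o(1)\}\log x/\log(L\log x/N)$; this matches the previous case as $N\to\log x/\log L$. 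Finally, if $N\le\log x/\log L$ then $\mathbb{P}(\mathbb{Y}=N)=L^{-N}\ge1/x$ while $\mathbb{P}(\mathbb{Y}\ge N+1)=0$, forcing $k_+=N$.

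For the lower tail the point probabilities are increasing in $j$ for $j<m$, so for $k<m$ the sum $\mathbb{P}(\mathbb{Y}<k)=\sum_{j<k}\mathbb{P}(\mathbb{Y}=j)$ is comparable, up to a bounded factor, to its largest term $\mathbb{P}(\mathbb{Y}=k-1)\asymp\mathbb{P}(\mathbb{Y}=k)$, and we are led to the \emph{same} equation $k(\log(k/m)-1)+m=\{1+o(1)\}\log x$, now seeking the \emph{smallest} $k$. If $N\le\{1+o(1)\}L\log x$ then already $\mathbb{P}(\mathbb{Y}<1)=\mathbb{P}(\mathbb{Y}=0)=(1-1/L)^N=e^{-m+o(\log x)}\ge1/x$, whence $k_-=0$. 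If $N=\{\lambda+o(1)\}L\log x$ with $\lambda>1$, the substitution $k=\delta m$ again yields $\delta(\log\delta-1)+1=1/\lambda$ with $1/\lambda\in(0,1)$, and the smallest admissible integer corresponds to the smaller root $\delta_-(\lambda)\in(0,1)$, so $k_-=\{\delta_-(\lambda)+o(1)\}N/L$; this joins continuously to the case $\lambda\le1$ since $\delta_-(\lambda)\to0^+$ as $\lambda\to1^+$. The inequality $k_-\le k_+$ is then immediate (if $k_-\ge k_+$ then $\mathbb{P}(\mathbb{Y}<k_-)\ge1-\mathbb{P}(\mathbb{Y}\ge k_+)\ge1-1/x>1/x$, a contradiction), and each displayed formula for $k_+$ is $O(\log x)$ since $L\to\infty$ and $\lambda$ is bounded.

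The main obstacle is precisely the uniformity of the error term: one must keep $\mathrm{Err}=o(\log x)$ simultaneously across all three regimes and across their boundaries (and across the allowed range of $\lambda$), where $\delta_\pm(\lambda)$ may approach $0$, $1$ or $\infty$ and where the geometric-tail comparison factors can degrade. Controlling $k^2/N$ and $\log k$ in the middle regime — using that small $N$ there forces small $k$ — together with gluing the three asymptotic formulas continuously at $N\asymp\log x/\log L$ and at $N\asymp L\log x$, is the crux.
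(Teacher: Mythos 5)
Your argument is correct and follows essentially the same route as the paper's proof: Stirling's formula applied to the point probabilities $\mathbb{P}(\mathbb{Y}=k)=\binom{N}{k}L^{-k}(1-1/L)^{N-k}$, the boundary observations $\mathbb{P}(\mathbb{Y}=N)=L^{-N}$ and $\mathbb{P}(\mathbb{Y}=0)=(1-1/L)^{N}$, and the substitution $k=\delta N/L$ leading to $\delta(\log\delta-1)+1=1/\lambda$. The only material addition is that you make explicit the comparison between tail and point probabilities (via the ratio of consecutive terms), which the paper leaves implicit in the proof and instead addresses in the subsequent remark via relative-entropy/Hoeffding bounds.
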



\begin{proof} From the independent binomial distributions we deduce that if $0\leq k\leq N$ then
\[
\mathbb{P}(\mathbb Y=k)  = \mathbb{P}\bigg(   \sum_{n\leq N} X_n=k\bigg)  = \binom Nk \bigg(  \frac 1L\bigg)^k \bigg( 1- \frac 1L \bigg)^{N-k}.
\]
 
 Therefore $\mathbb{P}(\mathbb Y=N) =1/L^N$ and this is $>1/x$ provided $N\leq \tfrac{\log x}{\log L}$. 
 
 Also $\mathbb{P}(\mathbb Y=0) =   ( 1- \frac 1L )^{N}=e^{-  N/L+O(N/L^2)}$ 
which is   $>\frac 1x$ for $N\leq \{ L+O(1)\} \log x$.\footnote{To be more precise we obtain $N\leq \frac{\log x}{-\log(1-\frac 1L)} = (L-\tfrac 12-\tfrac 1{12L}+O(\tfrac 1{L^2})) \log x$.}
 
We now estimate the terms in our formula for $\mathbb{P}(\mathbb Y=k)$:
 \begin{align*}
\binom Nk &= \frac{N^k}{k!}  \prod_{i=0}^{k-1} \bigg( 1-\frac iN\bigg) =  \frac{N^k}{(k/e)^k} k^{O(1)} \exp \bigg( \sum_{i=0}^{k-1} O\bigg( \frac iN \bigg) \bigg)  \\
& =  \frac{N^k}{(k/e)^k}   \exp \bigg(  O\bigg( \frac {k^2}{N} +\log k\bigg) \bigg) .
 \end{align*}
by Stirling's formula. We also have $( 1- \frac 1L )^{N-k}=\exp( -\frac NL +O(\frac kL+\frac N{L^2}))$, and so
 \[
\mathbb{P}(\mathbb Y=k)    =   \bigg(  \frac {eN}{kL} \bigg)^k   \exp\bigg(- \frac NL +O\bigg( \frac{k^2}N + \log k+ \frac kL + \frac N{L^2}  \bigg)\bigg)
\]

Therefore if $N=o(L\log x)$ and $k=o(\log x)$ then  $k^2/N\leq k=o(\log x)$   so that
\[
\mathbb{P}(\mathbb Y=k)     =  \bigg(  \frac {eN}{kL}\bigg)^k   x^{o(1)},
\]
and this equals $x^{-1+o(1)}$ if and only if
 \[
k \sim \frac {\log x}{ \log (\tfrac{L\log x}{N}  ) }
\]

Finally we deal with the range $N=\lambda L\log x$ with $\lambda>0$. If $k=\delta \lambda \log x$ with $\delta>0$  then, by the above estimate, 
\[
\mathbb{P}(\mathbb Y=k)=   \bigg(  \frac{e\lambda \log x}k \bigg)^k   \exp\bigg( -\lambda \log x +O\bigg( \frac{\log x}L    \bigg)\bigg) = 1/x^{\lambda(1-\delta    \log(e/\delta)) +o(1)},
\]
which equals $1/x^{1+o(1)}$ if  $\delta=\delta_{\pm}(\lambda)$ so that $\lambda(1-\delta \   \log(e/\delta) )=1$. 
\end{proof}

\begin{remark} There are well-known bounds on the tail of the binomial distribution (see, e.g., \cite{Fell}) which can be used to obtain this last result:
 \[
\frac 1{\sqrt{8k(1-\tfrac kN)}}\exp\bigg(  -N\,   \mathbb  D \bigg(  \frac kN \bigg|   \frac 1L      \bigg)   \bigg)  \leq  
\begin{cases} \mathbb{P} (    \mathbb Y \leq k)  &\text{ if } k\leq \frac NL\\
\mathbb{P} (    \mathbb Y \geq k)  &\text{ if } k\geq \frac NL
\end{cases}
\leq \exp\bigg(  -N\,   \mathbb  D \bigg(  \frac kN \bigg|   \frac 1L      \bigg)   \bigg) 
 \]
 where 
 \[
 \mathbb D(a|p):= a \log \frac ap + (1-a) \log \frac {1-a}{1-p}
 \]
which is called  the \emph{relative entropy} in some circles  (this clean upper bound can be obtained by an application of Hoeffding's inequality); the two cases are equivalent since if $k\geq \frac NL$ then 
$\mathbb D(1-a|1-p)=\mathbb D(a|p)$. Using these inequalities we would  determine $\delta=\delta(t,L)$ from the functional equation
\[
L\, \mathbb  D \bigg(  \frac \delta L \bigg|   \frac 1L      \bigg)     = \frac 1 t\bigg( 1 + O \bigg( \frac {\log\log x}{ \log x} \bigg) \bigg) ,
\]
which is slightly different, but yields $\delta(t,L)=\delta(t)+O(\tfrac 1{\log \delta(t)} ( \tfrac 1L+ \tfrac {\log\log x}{\log x}))$, a negligible difference in the ranges we are concerned about.
\end{remark}

\section{Asymptotics} \label{sec: uandv}

In section 1.3 we used the solutions $u=u_-\in (0,t)$ and $u=u_+\in (t,\infty)$ to 
\[
u(\log u -\log t -1)+t=1
\]
where $u(t)=t\delta(t)$, and $\delta=\delta_-\in (0,1)$ and $\delta=\delta_+\in (1,\infty)$ are the solutions to 
\[
f(\delta):=1-\delta \log(e/\delta)=\frac 1t.
\]
To verify these claims, we note that $f(0)=1, f(1)=0$ and $f(\infty)=\infty$
We have  $\frac{df}{d\delta}=   \log \delta $ so $f$ (as a function of $\delta$) has its minimum $f(1)=0$ with $f''(\delta)>0$ for all $\delta>0$. 
Therefore there exists a unique $\delta_-\in (0,1)$ with $f(\delta_-) =1/t$
for all $t>1$ and no such $\delta_-$ otherwise. Moreover $\delta_-(t)$ is an increasing function with limit $1$.
Also, there exists a unique $\delta_+>1$ with $f(\delta_+) =1/t$ for all $t>0$.
Moreover $\delta_+(t)$ is a decreasing function with limit $1$.

  We will now show that $ u_+(t)$ is increasing in $t>0$ and  $ u_-(t)$ is increasing in $t\geq 1$
  Differentiating $f(\delta)=\frac 1t$ we obtain $\log \delta \cdot \frac{d\delta}{dt} = - \frac 1{t^2}$.  Therefore 
 \[
 \frac d{dt} \log u(t)=\frac d{dt} \log t\delta=   \frac{1}{\delta} \frac{d\delta}{dt} +\frac 1t = \frac 1t - \frac 1{t^2\delta \log \delta}
 =  \frac {t\delta \log \delta-1} {t^2\delta \log \delta}= \frac {\delta-1} {t\delta\log \delta} >0
 \]
 for all $\delta>0$.

We can be more precise about the limits:

\subsection{Estimates as $t\to \infty$} Write $\delta= 1+\theta$ so that 
\[
1-1/t=(1+\theta)(1-\log (1+\theta)) =1-\frac{\theta^2}2+\frac{\theta^3}6-\frac{\theta^4}{12}+\dots
\]
Therefore $\theta=\pm \frac{2^{1/2}}{t^{1/2}} + \frac 1{3t}\pm \frac{1}{ 9(2t)^{3/2}} +O(\frac 1{t^2})$ as $t\to \infty$,
so that 
 \begin{align*}
u_+(t)=t \delta_+(t)&=t+ (2 t)^{1/2} + \frac 1{3 }+ \frac{1}{ 9\cdot 2^{3/2}t^{1/2}} +O(\frac 1{t}) \\
 u_-(t)=t  \delta_-(t)&=t- (2 t)^{1/2} + \frac 1{3 }- \frac{1}{ 9\cdot 2^{3/2}t^{1/2}} +O(\frac 1{t}),
 \end{align*}
 for large $t$. So if $t$ is large and $N=t L \log x$ then, in Proposition \ref{prop: k-values},
 \[
 k_\pm = \bigg(t \pm (2 t)^{1/2} + \frac 1{3 }- O\bigg( \frac 1{t^{1/2}}  \bigg)  \bigg)   \log x  \text{ as } t\to \infty.
 \]

\subsection{Approximating the normal distribution}
A random variable given as the sum of enough independent binomial distributions tends to look like the normal distribution, at least at the center of the distribution. However since we are looking here at tail probabilities, the explicit meaning of ``enough'' is larger than we are used to. To be specific,  $\mathbb Y$ has mean $\mu:=\tfrac NL$ and variance $\sigma^2=\tfrac NL(1-\tfrac 1L)$, and we  expect $\mathbb Y$ will eventually be  normally distributed with these parameters. If so, then
\[
\mathbb{P}(\mathbb Y<\mu-\tau\sigma), \mathbb{P}(\mathbb Y>\mu+\tau\sigma) \approx \frac 1{\sqrt{2\pi}} \int_{\tau}^\infty e^{-t^2/2} dt\sim 
\frac{e^{-\tau^2/2}}{\tau \sqrt{2\pi}} 
\]
and if this is $\approx 1/x$ then $\tau\sim \sqrt{2\log x}$. Therefore $\tau\sigma \sim ( 2\tfrac NL \log x)^{1/2}$.
Writing $N=\lambda L\log x$ we have $\tau\sigma \sim ( 2\lambda )^{1/2}\log x$. Therefore we might expect
the maximum and minimum values of $\mathbb Y$ to be $(\lambda\pm (2 \lambda)^{1/2} +o(1))\log x$. We see from section 7.1 that this is correct as $\lambda\to\infty$ (but not for fixed $\lambda$).

We can see this issue more simply:  If $k=\kappa N/L$ with $\kappa>1$ then  the binomial distribution gives
\[
\text{Prob} (\mathbb Y\geq k) \asymp \bigg(1-\frac 1L\bigg)^N \binom Nk \frac 1{(L-1)^k} 
=\exp\bigg( -\frac NL (\kappa(\log \kappa-1)+1+o(1))\bigg)
\]
and the normal distribution (with the same mean and variance) gives
\[
\text{Prob} (\mathbb Y\geq k)=\exp\bigg( -\frac NL ( \tfrac 12(\kappa-1)^2+o(1))\bigg)
\]
and the main terms here are only the same when $\kappa\to 1^+$.

\subsection{Estimates as $t\to 0^+$}

 In the other direction we obtain estimates for $\delta_\pm(t)$  as $t$ gets smaller.
 
If $t\to 0^+$ then we deduce from $\delta_+ ( \log \delta_+-1)+1=1/t$ that 
\begin{equation} \label{eq: delta_+estimate}
\delta_+(t) = \frac{1/t}{ \log \big(   \frac{1/t}{e\log 1/t} \big) }  \bigg( 1 + O \bigg(  \frac{\log\log 1/t}{( \log 1/t)^2}  \bigg) \bigg)
 \end{equation}
 so that 
 \[
 u_+(t)=t \delta_+(t)= \frac{1}{ \log ( 1/t) }  \bigg( 1 + O \bigg(  \frac{\log\log 1/t}{ \log 1/t}  \bigg) \bigg)
 \]
and therefore
\[
k_+ \sim u_+(t) \log x \sim \frac{\log x}{ \log(1/t) }   \text{ as } t\to 0^+.
\]
Combining this with the second estimate for $k_+$ in Proposition 1, we deduce that $k_+(N)$ is a continuous function in $N$ in the range of Proposition 1.

If $t\to 1^+$ then writing $t=1+\eta$ with $\eta>0$ small and $\delta_-=1/B$, we deduce from $\delta_- ( 1-\log \delta_-)+1=1/t$ that $\frac{1+\log B}B=\eta+O(\eta^2)$ and so
\[
1/\delta_-=B = (1/\eta) \log (1/\eta)  \bigg( 1 + O \bigg(  \frac{\log\log 1/\eta}{\log 1/\eta}  \bigg) \bigg) .
 \]
 This implies that 
  \[
 u_-(t)=t \delta_-(t)=  \frac{\eta}{ \log (1/\eta) } \bigg( 1 + O \bigg(  \frac{\log\log 1/\eta}{\log 1/\eta}  \bigg) \bigg) 
 \]
and therefore
\[
k_- \sim u_-(t) \log x \sim   \frac{(t-1)\log x}{\log (\tfrac 1{t-1})}  \text{ as } t\to 1^+,
\]
which $\to 0$ as $ t\to 1^+$.  This suggests that  $k_-=0$ for $N<\{ 1-o(1)\} L\log x$ but grows like
\[
  \frac {N-L\log x} {L \log \tfrac{N} {N-L\log x}}
\]
for a small range near $L\log x$ which we denote by $L\log x<N<\{ 1+o(1)\} L\log x$.

\section{Applying the modified Cram\'er heuristic}

Here is the general set-up. For some $z\leq y$ define $P=P(z):=\prod_{p\leq z} p$ so that $P(z)=e^{(1+o(1))z}$ by the prime number theorem.
For $S(x,y,z) := \#\{ n\in (x,x+y]:\ (n,P(z))=1\}$ (as in section 2.2) we define
\[
 I(N)=\{ X\in (x,2x]: S(X,y,z) =N\} .
 \]
for each integer $N$ in the range $0\leq N\leq S^+(y,z)$.  Our heuristic is that the values
\[
\pi(X,X+y] \text{ for } X\in  I(N),
\]
are distributed like the binomially distributed random variable 
\[
B(N,\tfrac 1L) \text{ where } L=\frac {\phi(P)}P \log x.
\]
We therefore use  Proposition 1 (with $x$ there equal to $\# I(N)$)  to predict the value of
\[
M_N(x,y) := \max_{X\in I(N)}  \pi(X,X+y] 
\]
for each $N$ with $I(N)$ non-empty.  From these predictions we obtain our  predictions for
 \[
 M(x,y) = \max_N  M_N(x,y).
 \]

One can work out the details of this heuristic to make precise conjectures provided we can get a good estimate for $\log \# I(N)$.
This is not difficult when $z\leq \epsilon \log x$: For each $m,0\leq m\leq P-1$ we have 
\[
S(X,y,z)=S(m,y,z) \text{ whenever } X\equiv m \pmod {P(z)},
\]
since $(X+j,P)=(m+j,P)$ for all $j$.  Moreover these intervals $(X,X+y]$ with $X\equiv m \pmod {P(z)}$ are all disjoint so can be considered to be independent.
Therefore if $N=S(m,y,z) $ then $P=P(z)\leq x^{\epsilon+o(1)}$ and so
\[
\#I(N)\geq \#\{ X\in (x,2x]: X\equiv m \pmod {P(z)}\} = x/P+O(1) \geq x^{1-\epsilon+o(1)}.
\]
Hence, when $z$ is this small, the answer given by our heuristic depends only on the extreme values, $S^-(y,z)$ and $S^+(y,z)$.

Getting a good estimate for $\log \# I(N)$ is not straightforward if $z$ (and therefore $y$) is significantly larger than $\log x$. However one expects our heuristic   to be more accurate the larger $z$ is, so we have to find the right balance in our selection of $z$.

\subsection{Very short intervals ($y\ll \log x$)}\label{applyingmodel-veryshort}

If $y\leq \eta \log x$ with $0<\eta<\tfrac 12$ small, then the above discussion suggests taking $z=y$.
Hence $S^+(y,z)=S^+(y,y)=S(y)$. For each $m\pmod P$ we apply Proposition 1 with 
\[
N=S(m,y,y),\  L=\frac {\phi(P)}P \log x, \text{ and } x \text{ replaced by } x^{1-\eta}.
\]
For given $L$ and $x$,
one obtains the largest value of $k_+$ in Proposition 1, when   $N$ is as large as possible. This happens here when $N=S(y)$, which we believe is $\sim \frac{y}{\log y}$ and know is no more than twice this.  Now $ L\asymp \frac{\log x}{ \log y}$ and Proposition 1 then implies that $k_+=N=S(y)$ as long as
$S(y) \leq (1-\eta+o(1)) \frac{\log x}{\log L}$, which should be true for any fixed $\eta < \tfrac 12$ (and at worst for $\eta < \tfrac 13$).

This supports the conjecture \eqref{eq: VSmall.Prediction} in a range   like $y\leq (\tfrac 12-o(1))\log x$. What about for larger $y$?

\subsection{Larger $y$ with a different choice of intervals}

For larger $y$, say $  \log x\ll y <(\log x)^A$ with $A>2$, we need to decide how to select our value for $z$.
One might guess that the right way to do so is   to take   $z=y$.\footnote{We do not wish to sieve with  primes  larger than the length of the interval, since any larger primes cannot divide more than one element in an interval of length $y$, so cannot be helpful in a sieve argument.}
That is, to sieve the intervals of length $y$ with all of the primes $\leq z=y$, and then apply the modified Cram\`er model.  
In this case the sets $\{ j\in [1,y]: (X+j,P)=1\}$
are probably different for every $X\in (x,2x]$ (certainly they do not repeat periodically as in the earlier subsection), which seems difficult   to cope with.  However we do not need to understand these sets so precisely, we only need to understand their size, that is, to have good estimates for $\log \# I(N)$ for each $N$, but even this seems to be out of reach. Therefore this is the less desirable option (though we work through some of the details in Appendix C). In general, we do not know how to get good estimates for $\log \# I(N)$ whenever $z$ is substantially larger than $\log x$. 

These (for now insurmountable) issues, suggest that we should proceed as before, with a smallish value of $z$, so as to recover the sieved sets repeating predictably. Therefore we pre-sieve the intervals of length $y$ with all of the primes $\leq z:=\epsilon \log x$, and then apply the modified Cram\`er model.  
There might be a substantial difference when sieving with the primes $\leq z$, as opposed to $y$, though we hope not.  If there is a substantial difference then this needs further investigation.

\subsection{Larger $y$; Predictions by pre-sieving up to $z=o( \log x)$}

We pre-sieve with the primes up to $z=\epsilon \log x$ where $\epsilon\to 0$ very slowly as $x\to \infty$. 
In this case we have seen that we may cut to the chase by taking
\[
N_+=S^+(y,z)=:e^{-\gamma} \frac{y}{\log\ z}c_+ \text{ and } L=\frac{\phi(P)}P \log x\sim e^{-\gamma} \frac{\log x}{\log\log x}
\]

  \medskip
  
\noindent \textbf{Prediction:\ Pre-sieving up to $z=\epsilon \log x$}:  
\textsl{ If $\log x\ll y= o( (\log x)^2)$ then
\[
M(x,y) = \min\bigg\{ S^+(y,z),   \{ 1+o(1)\}  \frac {\log x}{ \log \big(\tfrac{(\log x)^2}{y}  \big) }\bigg\} .
\]
If $y=\lambda (\log x)^2$ with $\lambda>0$ then 
 \[
 M(x,y) \sim u_+(\lambda c_+)   {\log x} \text{ and }  m(x,y) \sim u_-(\lambda c_-)   {\log x} .
 \]
 }
 \smallskip
 
 If $y\asymp \log x$ then this might predict that $M(x,y)=S^+(y,z)>S(y)$ which is obviously false (though not by much) --   in this range it therefore makes sense to sieve up to $z=y$, which  will assure the feasible prediction $M(x,y)=S(y)$ (as we work out in Appendix C).

   If $\lambda$ is large and $y=\lambda (\log x)^2$ then
 \[
 u_+(\lambda c_+)   = \lambda c_+ + \sqrt{ 2\lambda c_+ } + O(1) ,
 \]
 and so $M(x,\lambda (\log x)^2)\sim c_+ \frac y{\log x}$ as $\lambda\to \infty$; and analogously 
$m(x,\lambda (\log x)^2)\sim c_- \frac y{\log x}$.

 \begin{proof}[Deduction from the predictions of Proposition 1] We apply Proposition 1 to predict, for each $ 0\leq j\leq P-1$ where $P=P(z)$,
\[
M_j(x,y):=\max_{ \substack{X\in (x,2x]\\ X\equiv j \pmod P}}   \pi(X+y)-\pi(X)
\]
and then we guess that $M(x,y)=\max_j M_j(x,y)$. We observe that 
  \[
   \#  \{ X\in (x,2x]: \ X\equiv j \pmod P\}= \frac xP+O(1)=x^{1-o(1)}\]
    for each $j$, so we apply Proposition 1 to a set of this size, and the result follows directly.  The analogous proof works for $m(x,y)$.
   \end{proof}

 \section{Which choices should we make?}
 
 We will now distill these discussions, which each yield slightly different predictions.
 
 \subsection{Very short intervals ($y\ll \log x$)}
 In section 1.1, we predicted that if $y\leq c \log x$ then $M(x,y)=S(y)$. This was confirmed by one heuristic in section 4.1,
and by a very different heuristic in section 8.1, giving us some confidence in this conclusion.
 
 From all three discussions it is not obvious what explicit constant one should take in place of the inexplicit ``$c$''.
  Our guess is that for any $\epsilon>0$ one has
 \[
 M(x,y)=S(y) \text{ for } y\leq (1-\epsilon) \log x,
 \]
if $x$ is sufficiently large, as well
 \[
 M(x,y)\sim \frac{\log x}{\log\log x} \text{ for }  (1-\epsilon) \log x\leq y\leq (1+o(1)) \log x.
 \]
 The ``$o(1)$'' is inexplicit and our methods do not  pinpoint the transition more accurately. The data represented in figure 1 appear to more-or-less confirm these predictions.  However  these small $x$-values do suggest that $c>1$ which we do not believe, since that would force contradictions to our predictions for $M(x,y)$ for larger $y$.   
 
  \subsection{Intermediate length intervals ($\log x\leq y=o( (\log x)^2)$)} 
 In the range
 $\log x\leq y= o( (\log x)^2)$ we have predicted \eqref{eq: Intermediate intervals} no matter whether we presieve up to $z$ or up to $y$.

 One can revisit the heuristic arguments above to try to get a more accurate approximation: By \eqref{eq: delta_+estimate} we believe that if $y=\lambda (\log x)^2$ with $\lambda\to 0$ then
 \[
 M(x,y) \text{ is better approximated by }   \frac{\log x}{ \log \big(   \frac{1/\lambda}{e\log 1/\lambda} \big) }  .
  \]
However the data for this prediction is no more compelling then for the less precise prediction $L(x,y)$  in this range, presumably because $x$ is so small.

\subsection{Comparatively long intervals ($y/(\log x)^2\to \infty$ with $y\leq x$)}

Here we write $y=(\log x)^A$ with $A\geq 2$ and understanding that if $A=2$ then $y/(\log x)^2\to \infty$.
If \eqref{eq: asymp for S's} holds then Proposition 1 suggests that 
\[
M(x,y) \sim \sigma_+(A) \frac y{\log x} \text{ and } m(x,y) \sim \sigma_-(A) \frac y{\log x} 
\]
which is what we believe.

If we were to pre-sieve up to $y$ then Proposition 1 suggests that one should make a similar prediction but with 
$\sigma_+(A) $ replaced by  
\[
  \max_{x<X\leq 2x} \#\{ j\leq y:\ (X+j,P(y))=1\} \bigg/\frac{\phi(P(y))}{P(y)} y.
\]
(and $\sigma_-(A) $ by the analogous expression with the min). However we have no idea how  to study this ratio  in this restricted range
for $X$.

  \subsection{Longish intervals ($y\asymp (\log x)^2$) } 
  
  In section 1.3 we saw that if  $y=\lambda (\log x)^2$ then we should expect that 
 \[
 M(x,y) \sim u_+(  c_+\lambda)    \cdot  {\log x}
 \] 
Now $u_+(  c_+\lambda) \sim c_+\lambda$ as $\lambda\to \infty$ and so $M(x,y) \sim c_+ \frac y{\log x}$.
 This implies, letting $\lambda\to \infty$ and comparing this prediction to that in the last subsection, that $c_+=\sigma_+(2) $.
 
  Following the same heuristic  but now focusing on the minimum we see that if  $y=\lambda (\log x)^2$ then we should expect that 
 \[
 m(x,y) \sim u_-(  c_-\lambda)    \cdot  {\log x}
 \] 
for some constant $c_->0$. This analogously yields that $c_-=\sigma_-(2) $.

\subsection{More precise guesses for the maximal gap between primes}
   
 We can be more precise about our prediction for gaps between primes using the footnote in the proof of Proposition 1. The estimate there
   $N\leq (L-\tfrac 12+o(1))\log x$ with $L= \frac  {\phi(P)}P \log x$ which would suggest that 
 \[
 \max_{x<p_n\leq 2x} p_{n+1}-p_n \approx c_-^{-1} \log x \bigg( \log x - \frac 12\frac P{\phi(P)} \bigg) \approx
 c_-^{-1} \log x \bigg( \log x - \tfrac 12 \log\log x \bigg) .
 \]
 Here $P=P(z)$ and $c_-$   depend on $z$.
 
 Cadwell \cite{Cad} presented a variant of Cram\'er's model.  He took the viewpoint that certain aspects of the distribution of $H:=\pi(2x)-\pi(x)$ primes in $(x,2x]$ can be assumed to be like the distribution of $H$ randomly selected integers in $(x,2x]$. He very elegantly proved that the expected largest gap has length $\frac x{H+1} (\frac 11+ \frac 12 +\dots +\frac 1{H+1})$. This can be used to predict that\footnote{Cadwell's conjecture of $\log x(\log x -\log\log x)$ for the largest prime gap $\leq x$ was briefly mentioned in section 1.4. However since $x/\pi(x)$ is more accurately approximated by $\log x -1$, a famous correction of Legendre's prediction by Gauss, he should have deduced $(\log x-1)(\log x -\log\log x)$ from his model!  Here we are looking at gaps in $(x,2x]$ rather than up to $x$, which explains the difference in the constants.}
 \[
 \max_{x<p_n\leq 2x} p_{n+1}-p_n \approx  \log (4x/e) (\log x -\log\log x +\gamma).
 \]
 It is not clear how to incorporate divisibility by small primes into this argument, particularly working only with those intervals with an unexpectedly small number of integers left unsieved.   
 
 There are some similarities in these two conjectural formulas but it is not clear which to choose and on what basis.
 We did see in Figure 5 that the data suggests that one should subtract a larger multiple of $\log\log x$ in the formulas above but we have not found a believable heuristic to do so, though finding a way to combine the two heuristics would be a good start.

\section{Short arithmetic progressions}   We can proceed similarly with the distribution of $\pi(qy;q,a)$,  
the number of primes among the smallest $y$ positive integers in the  arithmetic progression $\equiv a \pmod q$,
as we vary over reduced residue classes $a \pmod q$ and where $y$ is small compared to $q$.  As before we sieve out with the primes $\leq z$ (that do not divide $q$) before trying to find primes.
If $P_q(z):=\prod_{p\leq z,\ p\nmid q} p$ then the probability that a random such integer of size $q^{1+o(1)}$ is prime is
\[
\sim \frac{ qP_q(z)}{\phi(qP_q(z))} \frac 1{\log q}
\]

Now the number of unsieved integers in such an interval of length $y$ is expected to be 
\[
\frac{\phi(P_q(z))} { P_q(z)}  y,
\]
and so the ``expected'' number of primes is 
\[
\sim \frac{ q}{\phi(q)} \frac y{\log q}
\]
(which is what suggested by the prime number theorem for arithmetic progressions).
This set up allows us to proceed much as in the questions about primes on short intervals.
We shall explore this in detail, with copious calculations, in a subsequent article.

  \appendix
  
\section{The largest prime gap conjecture in computing range}

In section 1.4, particularly in figure 5, we saw that our predictions for $\displaystyle\max_{p_n\leq x} (p_{n+1}-p_n)$ appear to be significantly too large. The technique we used to make our prediction involves several asymptotic predictions for the distribution of primes and for the sieve and so any of these may be sufficiently far out for small integers that this might have led to the difference from the data that we have seen.  Our belief is that the main issue is the sieving and not the probabilistic argument and so we test that in this section.  We take an example near to the upper limit of what is currently computable:

We take $\log x=40$: The largest prime gap up to $x$ is $1248$ immediately following $218034721194214273$.
The Cram\'er prediction is $1600$ and ours is $1797$.  We follow the argument in this paper:

We want to determine the maximal gap $y$ which should be (at a first guess) around $(\log x)^2=1600$ (at least according to Cram\'er), so we will now study sieving all intervals of length $1600$ with the primes $\leq z=\tfrac 12 \log x=20$. Define $P=P(20)$ and
\[
R(n):=\#\{ X \pmod {P}: S(X,y,z)=n\} \text{ where } n=:c_n \tfrac{\phi(P)}{P} y.
\]
In the notation of Proposition 1, we want   $N=n$ to be as large as possible so that $k_-=0$ where $L=\frac{\phi(P)}{P}\log x$ and $x$ (there) equals
$R(n)x/P$ here. Proposition 1 then suggests that we should take $N\sim L \log (R(n)x/P)$. Referee \#3 observed that the proof of Proposition 1 indicates that 
replacing $L$ in this formula by $\frac 1{-\log(1-\frac 1L)}$ is more accurate, and indeed is about 7.5\% better for this value of $L$. This then suggests that \[
y \approx \   \max_n \frac {37}{c_n} (23.91 +\log R(n)).
\]
as $\log (x/P)\approx 23.91$ (where we had ``40'', which is $\log x$, instead of ``37'', before the referee's suggestion).
We can easily determine this function for each $n$ on a computer, and from this we obtain a prediction of $y=1420$,\footnote{It was $y=1536$ before the referee's intervention.} significantly smaller than either previous prediction, but still unaccountably larger than the truth. The data   for each $n$  is given in the following:

\begin{figure}[H]\centering{

\centerline{\vbox{\offinterlineskip \halign{\vrule #&\ \ #\ \hfill
&& \hfill\vrule #&\ \ \hfill # \hfill\ \ \cr \noalign{\hrule} \cr
height5pt&\omit && \omit && \omit & \cr &$ \ \ n $&& $R(n)$ &&
 \hfill$\frac {37}{c_n} (23.91 +\log R(n))$\hfill & \cr
height5pt&\omit && \omit && \omit & \cr \noalign{\hrule} \cr
height3pt&\omit && \omit && \omit & \cr
&  234 && 24 && 1040.2 & \cr
           height3pt&\omit && \omit && \omit & \cr
& 235 && 784 && 1169.0  & \cr
           height3pt&\omit && \omit && \omit & \cr
& 236 && 6392 && 1244.0 & \cr
           height3pt&\omit && \omit && \omit & \cr
& 237 && 32404 && 1300.3 & \cr
           height3pt&\omit && \omit && \omit & \cr
& 238 && 123540 && 1345.4 & \cr
           height3pt&\omit && \omit && \omit & \cr
& 239 && 342796 && 1378.1 & \cr
           height3pt&\omit && \omit && \omit & \cr
& 240 && 737536 && 1401.0 & \cr
           height3pt&\omit && \omit && \omit & \cr
& 241 && 1263416 && 1415.3 & \cr
           height3pt&\omit && \omit && \omit & \cr
& 242 && 1714444 && 1420.8 & \cr
           height3pt&\omit && \omit && \omit & \cr
& 243 && 1841372 && 1417.6 & \cr
           height3pt&\omit && \omit && \omit & \cr
& 244 && 1569650 && 1405.9 & \cr
           height3pt&\omit && \omit && \omit & \cr
& 245 && 1075420 && 1386.3 & \cr
           height3pt&\omit && \omit && \omit & \cr
& 246 && 594076 && 1359.0 & \cr
           height3pt&\omit && \omit && \omit & \cr
& 247 && 265624 && 1324.2 & \cr
           height3pt&\omit && \omit && \omit & \cr
& 248 && 95356 && 1281.8 & \cr
           height3pt&\omit && \omit && \omit & \cr
& 249 && 28584 && 1233.1 & \cr
           height3pt&\omit && \omit && \omit & \cr
& 250 && 6652 && 1175.8 & \cr
           height3pt&\omit && \omit && \omit & \cr
& 251 && 1320 && 1113.2 & \cr
           height3pt&\omit && \omit && \omit & \cr
& 252 && 268 && 1051.9 & \cr
           height3pt&\omit && \omit && \omit & \cr
& 253 && 32 && 972.3 & \cr
           height3pt&\omit && \omit && \omit & \cr
 \noalign{\hrule}}} }
 
\caption{Data when $y=1420$}
}
\end{figure} 

\noindent We see that there are about $1.71$ million intervals mod $P(20)$ of length $1420$ which contain exactly $242$ integers that are coprime to $P(20)$. The probabilistic argument then suggests that some of the corresponding intervals in $(x,2x]$ contain no primes at all.
If instead we work with $P(25)$ then our prediction reduces a little but not much, and indeed we tried all the obvious possibilities but could not manipulate the variables to construct a prediction that would reduce $1420$ to anywhere near the truth, namely $1248$.

\section{Is the model valid?}

\subsection{A first example, $x=10^8, y=340, z=11$}

For $ x=10^8$  we are going to study the distribution of primes in intervals of length $y=340\approx (\log x)^2$, which lie between $x$ and $2x$, grouping them according to the value of $S(X,y,z)$ where $z=11$.

A quick calculation reveals that $S(X,340,11)$ takes each value between $68$ and $73$.  
Let $C(N):=\#\{ m\pmod P: S(m,y,z)=N\}$.
As discussed in section 8, we have $S(X,y,z)=S(m,y,z)$ whenever $X\equiv m \pmod {P(z)}$, so that
\[
  I(N)=\bigcup_{ m \in C(N)}   \{ X\in (x,2x]:  X\equiv m \pmod P\},
\]
and therefore $\#I(N)= \frac xP \#C(N) +O(P)$.  A simple calculation yields that $P(11)=2310$ with
\[
\#C(68)=28,  \#C(69)=228,  \#C(70)=784,  
\]
\[
\#C(71)=820,  \#C(72)=386,  \#C( 73)=64.
\]

For each $N\in [68,73]$ we define, for each integer $h$,
\[
I(N,h):= \{  X\in I(N): \pi(X+y)-\pi(X)=h\}.
\]
Then we create the bar graph where the column rooted at $h$ on the vertical axis has height $\# I(N,h)$.

We wish to compare this to our assumptions, and to the  binomial distribution.
The first thing we might want to look at is how the sieving effects the probability of being prime. Thus if $\mu(N)$ is the calculated mean number of primes in an interval in $I(N)$, then we are interested in the probability of an unsieved integer being prime, namely $1/L(N)$ where
$L(N)=N/\mu(N)$.  In our model we would take $L=\frac {\phi(P)}P \log x=3.82767\dots$; but to compare this to small data we need to be more precise, noting that a better approximation to 
\[
\frac{x}{\pi(2x)-\pi(x)} \text{ is given by } \log 4x/e,
\]
and using this we have $L=\frac {\phi(P)}P \log 4x/e=3.90794 \dots$
Our data yields
\[
L(68)= 3.8665\dots, 
L(69) = 3.8847\dots, 
L(70) = 3.8977\dots, 
\]
\[
L(71) = 3.9133\dots, 
L(72) = 3.9265\dots,  
L(73) =  3.9418\dots,
\]
which are all reasonably close to $L$ (no more than about 1\% out).
The $L$-values here appear to be growing, more or less linearly, which deserves an explanation.
A `best fit' approximation yields that $L(N)\approx L + 0.01478(N-70.69)$.

Next we compare what the binomial distribution predicts to the actual counts for primes when $S(X,y,z)=N$.
Here $N$ runs from $68$ to $73$ and we graph $I(N,h)$ compared to the prediction 
\[
 \binom Nh    \frac 1{L^h}\  \bigg( 1- \frac 1{L} \bigg)^{N-h}  
\]
from the binomial distribution. We also mark the mean $\mu(N)$ number of primes in these intervals, as well as $m_N(x,y), M_N(x,y)$, the minimum and maximum number of primes in such intervals, and $m(x,y), M(x,y)$, the global minimum and maximum.

\begin{figure}[H]\centering{
\includegraphics[scale=.5]{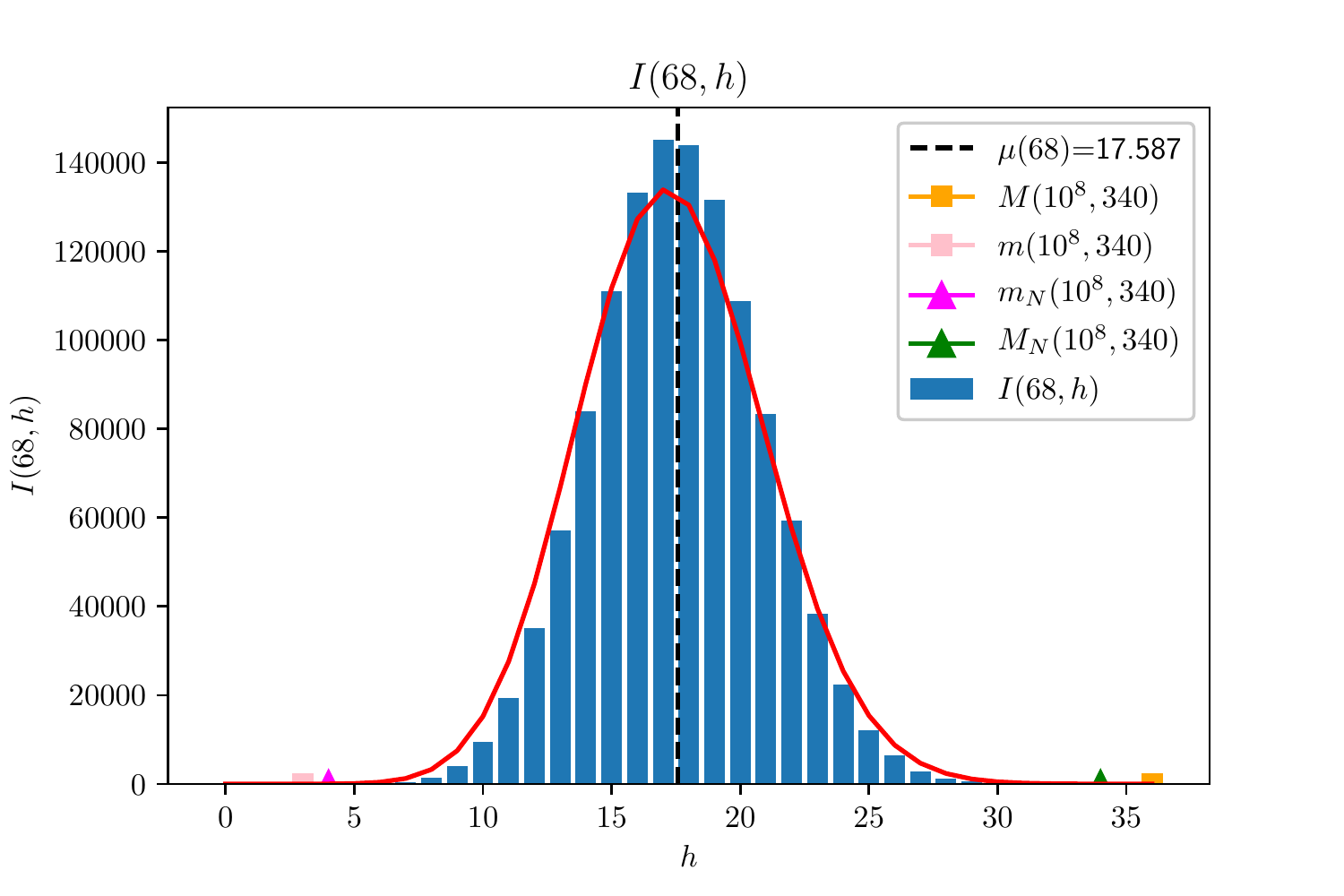}
\includegraphics[scale=.5]{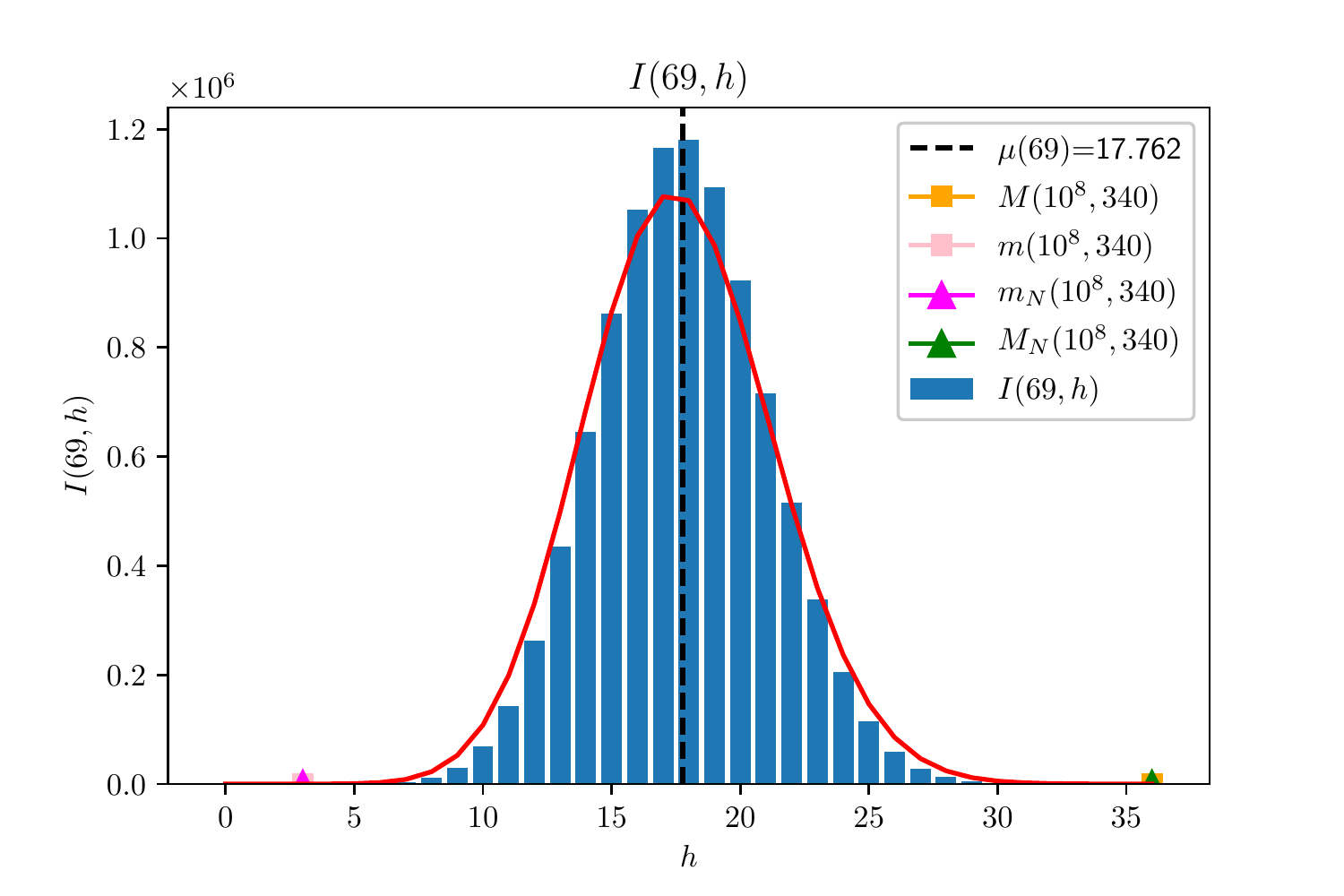}
\newline

\includegraphics[scale=.5]{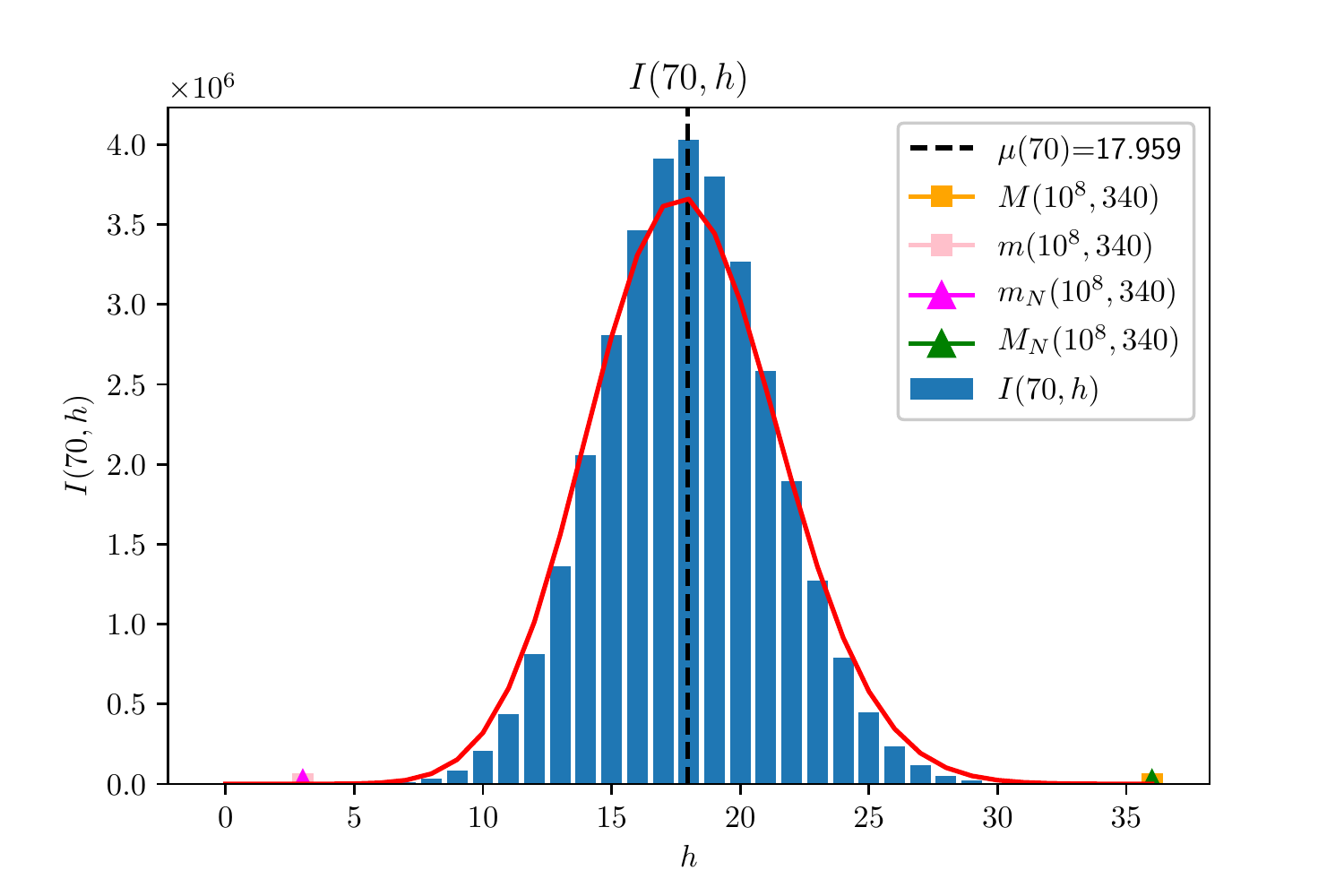}
\includegraphics[scale=.5]{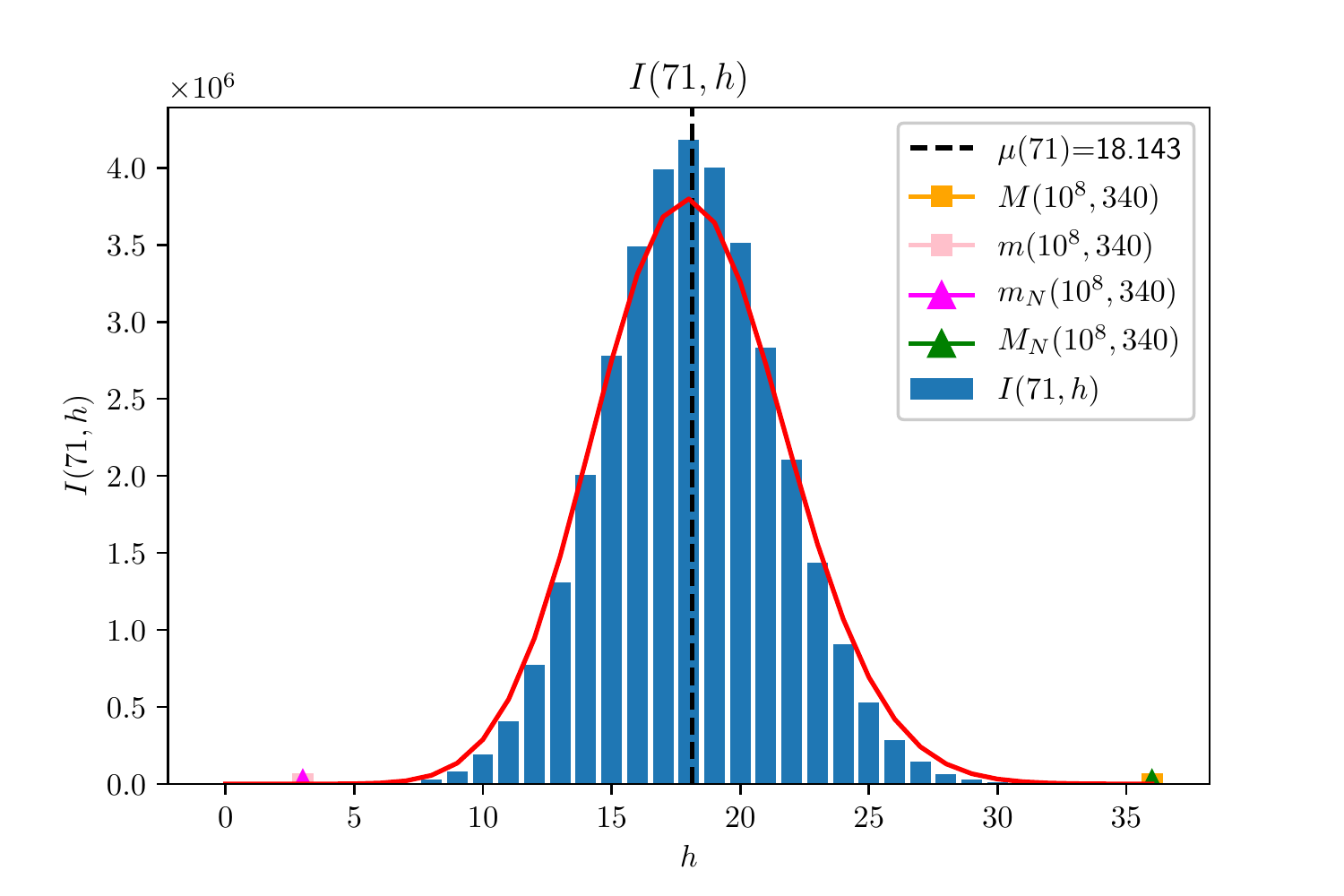}
\newline

\includegraphics[scale=.5]{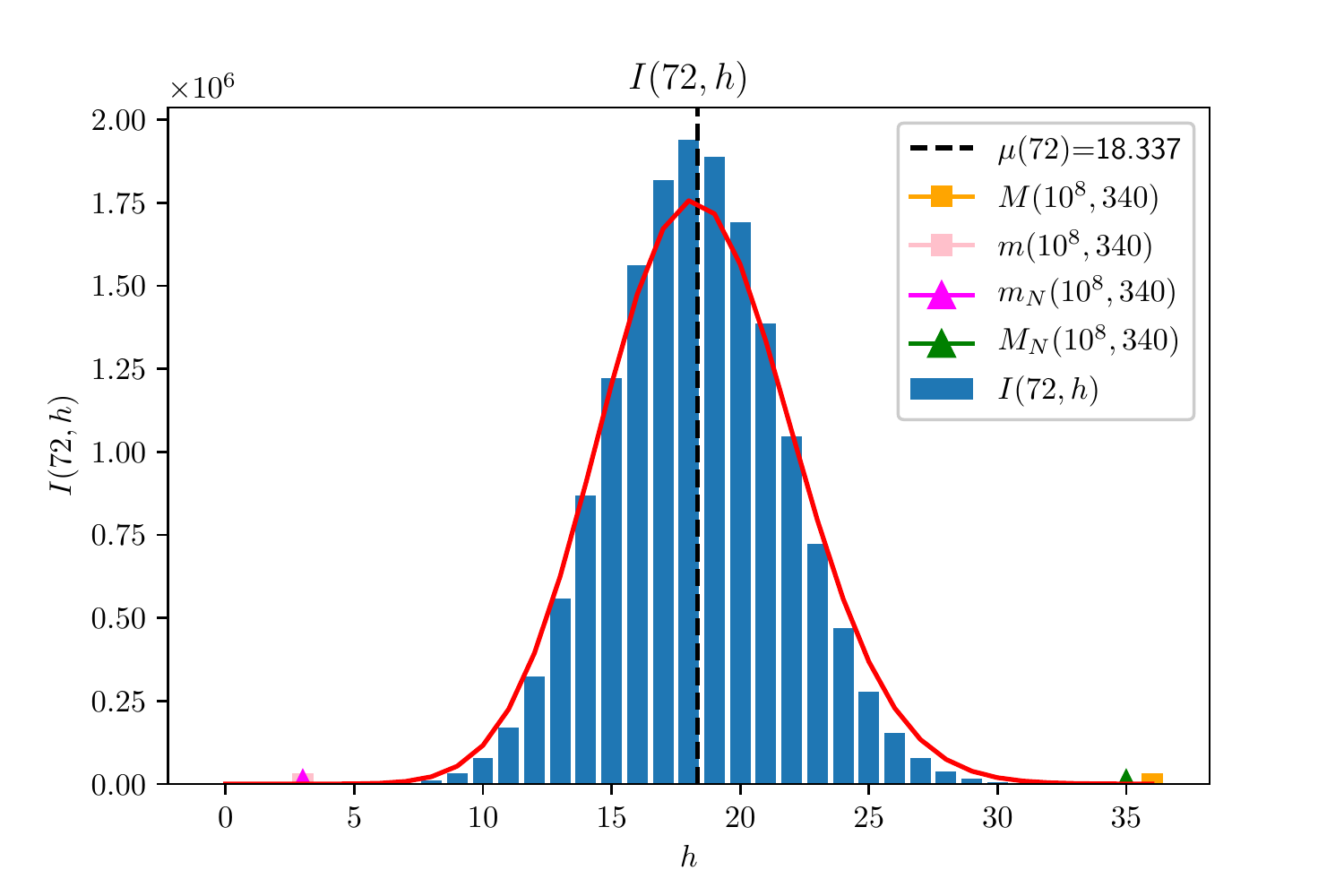}
\includegraphics[scale=.5]{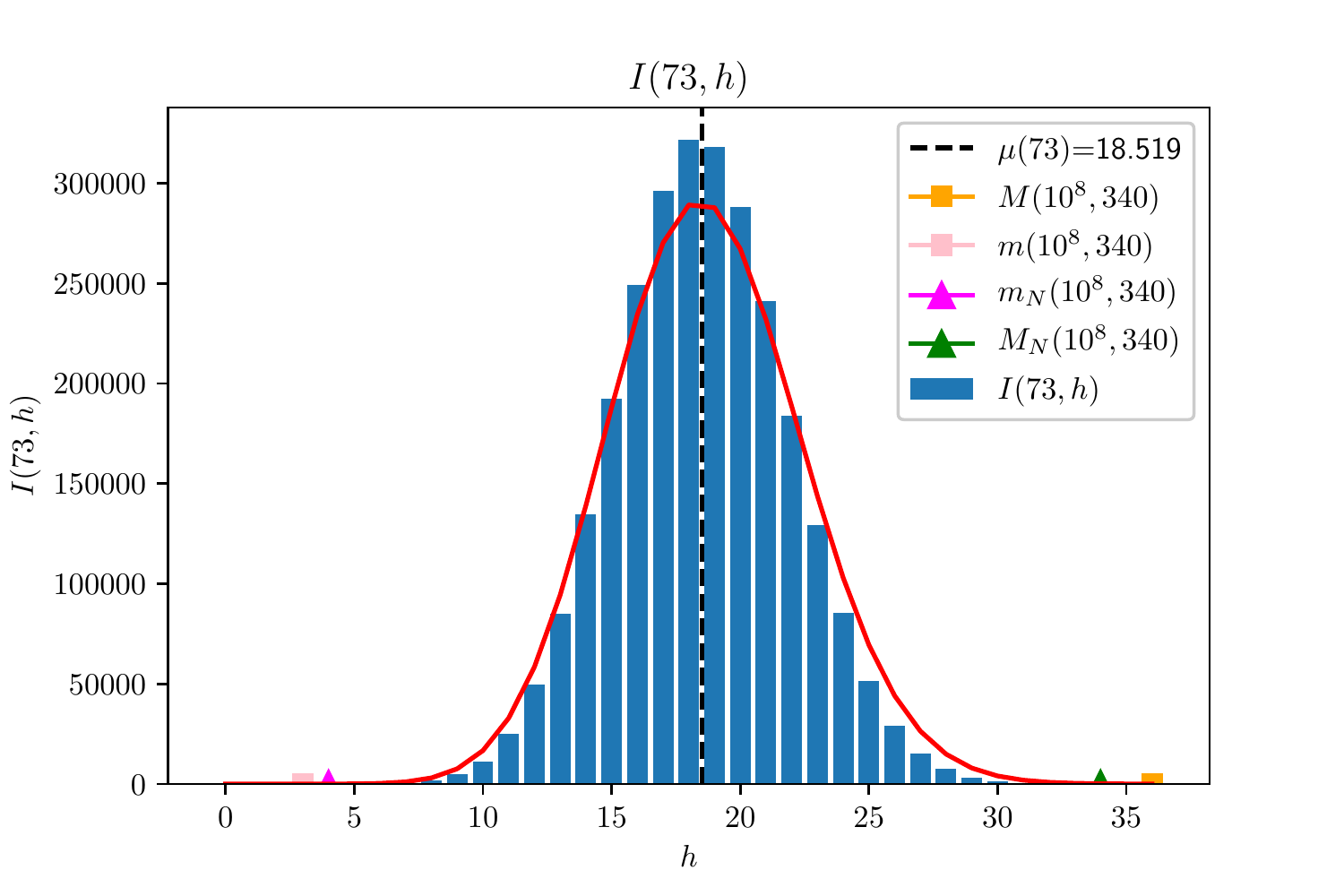}  }
\caption{Testing the distributions, $h$ vs $I(N,h)$, for each $N$ in our range.}
\end{figure} 

In each case we see that our prediction has the same basic shape as the data (a Bell curve) but is wider than the data, with less density around the mean.
We can analyze this by simply looking at the mean and variance compared to what is expected from our model.

\[ 
\begin{array}{rcccccc}
 N:  & \underline{68} & \underline{69} & \underline{70} & \underline{71} & \underline{72} & \underline{73}  \\
\mbox{Expected mean:} & 17.40 & 17.66 & 17.91 & 18.17 & 18.42 & 18.68 \\
\mbox{Actual mean:} & 17.59 & 17.76 & 17.96 & 18.14 & 18.34  & 18.52 \\
\mbox{Expected variance:} & 12.95 &  13.14 &  13.33 &  13.52 & 13.71 & 13.90 \\
\mbox{Actual variance:} & 10.82  & 10.93  & 11.06  & 11.17  & 11.25 & 11.34 
\end{array}
\] 
  
 Although both the actual and expected means increase with $N$ we see that the actual mean increases more slowly than the expected.
 More striking is that the actual variance, that is the variance given by the data, is far smaller than in our prediction. 
 
 According to  Montgomery and Soundararajan \cite{HLM} we should have 
 \[
 \sum_{X=x}^{2x} (\psi(X+y)-\psi(X)-y)^{2k} \sim y^k \cdot \int_{t=x}^{2x} \bigg(\log \frac{e^{-\gamma}t}{2\pi y} +1  \bigg)^k dt
 \]
 for $\log x\leq y\leq x^{1/2k}$. Therefore the variance here (for the primes) is, more-or-less
 \[
\frac y{x(\log x)^2}\cdot \int_{t=x}^{2x} \bigg(\log \frac{e^{-\gamma}t}{2\pi y} +1  \bigg) dt =
\frac y{\log x}\cdot   \frac{\log \frac{2 e^{-\gamma}x}{\pi y} }{\log x} .
 \]
 Thus a first approximation gives 
 mean $\frac y{\log x}\approx 18.46$ and variance $\approx 11.586$.
 If we replace $\log x$ by $\log 4x/e$ (since this gives a more accurate description of the density of primes in $[x,2x]$) then we get $\approx 18.08$ and $\approx 11.11$, respectively.
 This corresponds very well to the data.

\subsection{A second example, $x=10^8, y=500, z=17$}

Here $S(X,500,17)$ takes each value between $84$ and $97$.
Now $P(17)=510510$ and the $C$-values are given by
\[
\begin{array}{c|ccccccc}
h& 84 & 85 & 86 & 87 & 88 & 89 & 90  \\
\hline
\#C(h) & 52 &  576 &  3764 &  15836 &  47186 &  91432 &  125688  \\
\end{array}
\]
\[
\begin{array}{c|ccccccc}
h&   91 & 92 & 93 & 94 & 95 & 96 & 97\\
\hline
\#C(h) &    115800 &  70096 &  29428 & 8050 &  1520 &  212 &  28\\
\end{array}
\]
We see that there are very few such intervals for the outlying $h$-values, and indeed the data for these $h$-values does not conform to the patterns that we observe.

We have that $L=\frac{\phi(P(z))}{P(z)}\log(4x/e)= 3.39513\dots$ and our data yields the following $L$-values to four decimal places
\[
\begin{array}{c|ccccccc}
h& 84 & 85 & 86 & 87 & 88 & 89 & 90  \\
\hline
L(h) & 3.3853 &  3.3805& 3.3845 & 3.3843& 3.3873 & 3.3906 & 3.3938 \\
\end{array}
\]
\[
\begin{array}{c|ccccccc}
h&   91 & 92 & 93 & 94 & 95 & 96 & 97\\
\hline
L(h) & 3.3974 & 3.4011   & 3.4043 & 3.4062 & 3.4082 &  3.4156& 3.4450\\
\end{array}
\]
Again it is usually within 1-2\% of the true $L$-value, but is slightly increasing.
Our best linear approximation is $L(N)\approx L + .003054(N-90.09)$. The corresponding graphs are given by

\begin{figure}[H]\centering{
\includegraphics[scale=.3]{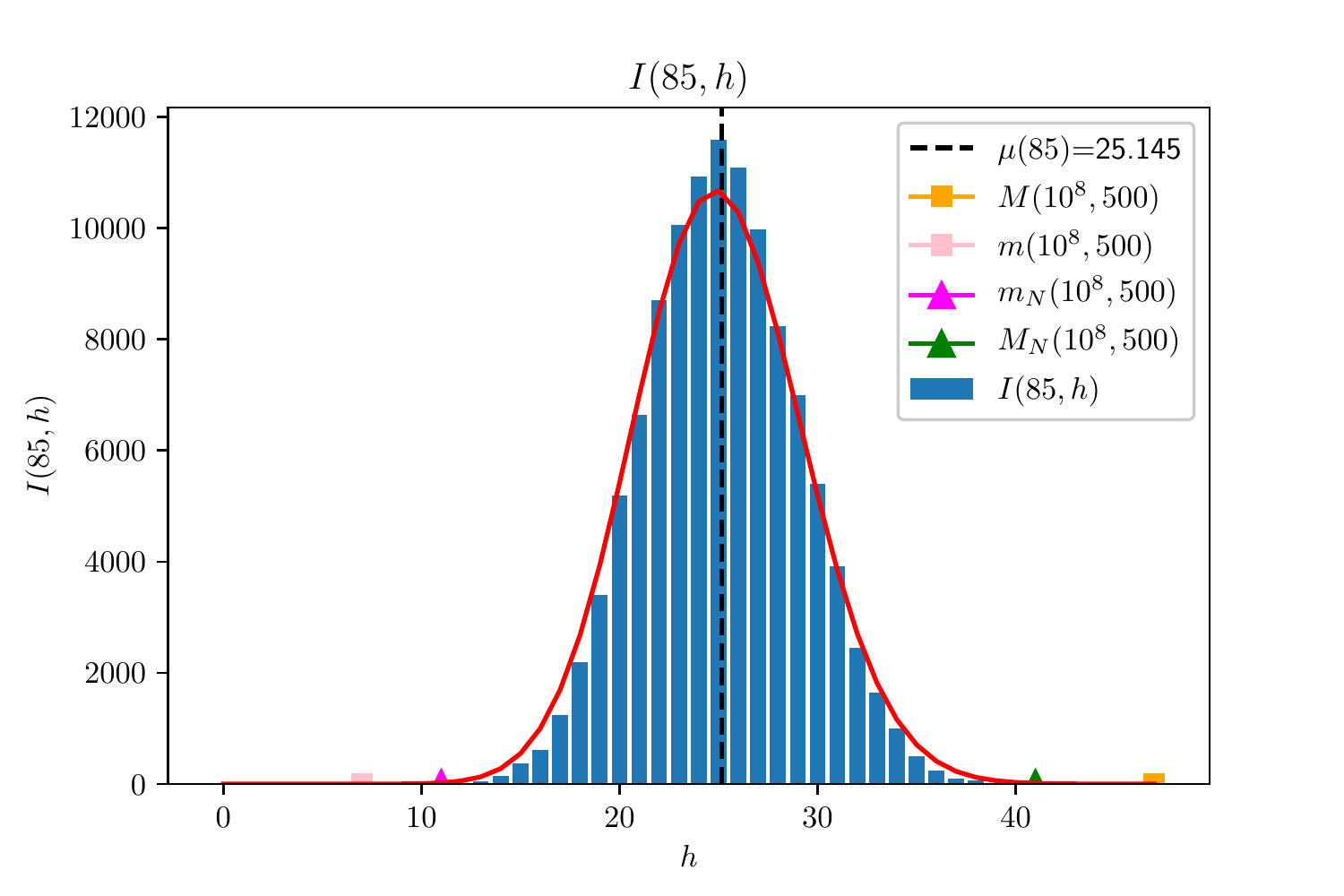}
\includegraphics[scale=.3]{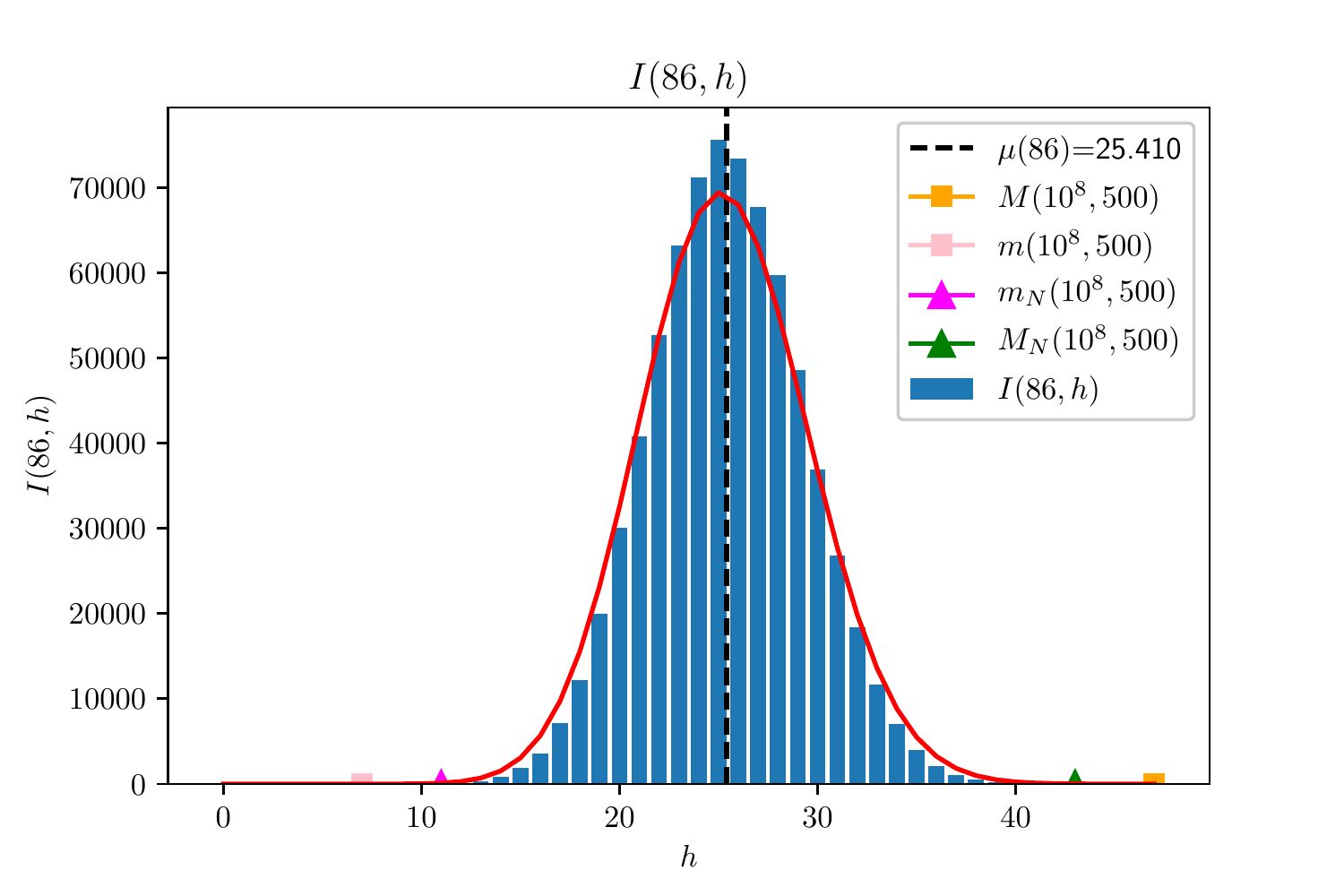}
\includegraphics[scale=.3]{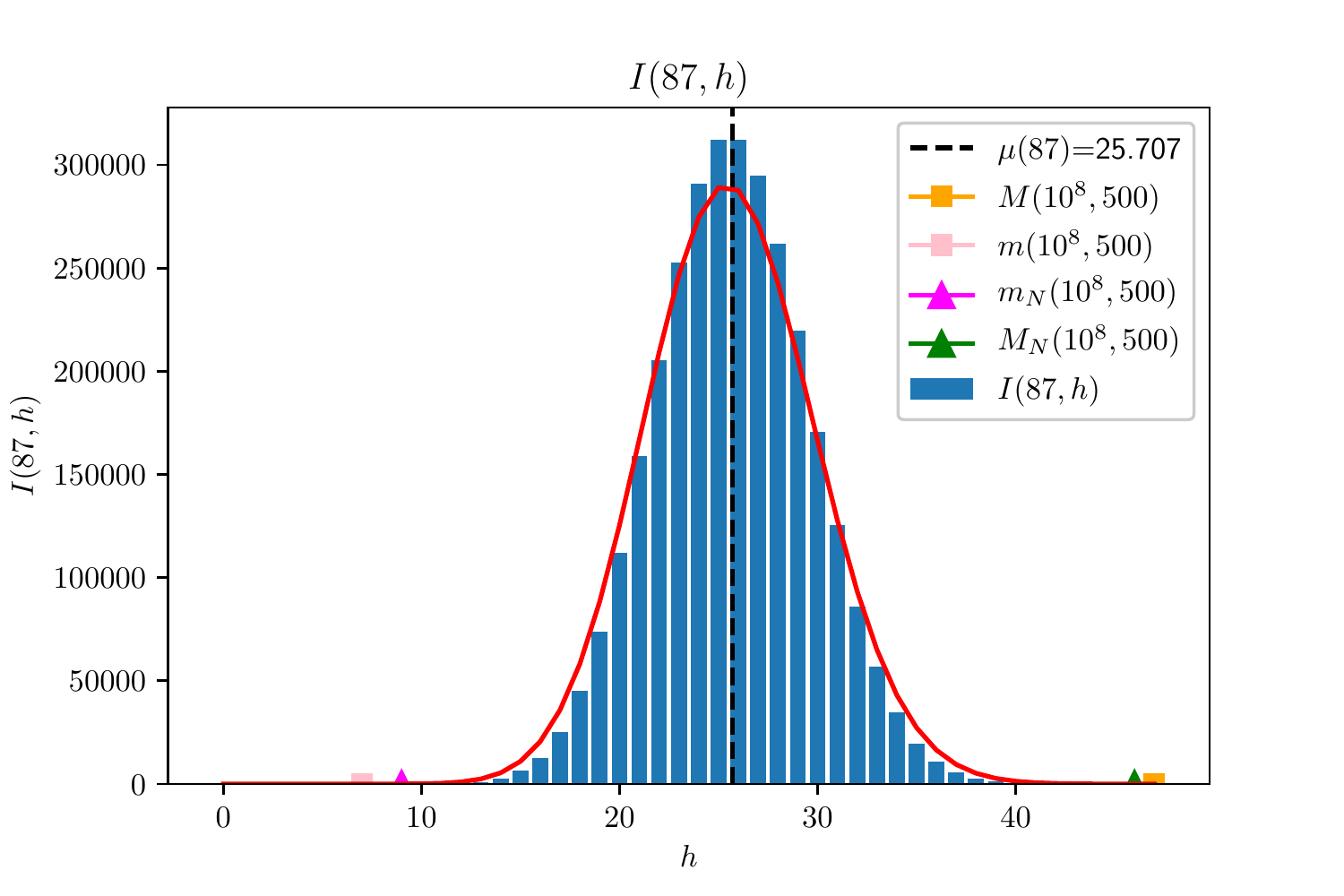}
\newline

\includegraphics[scale=.3]{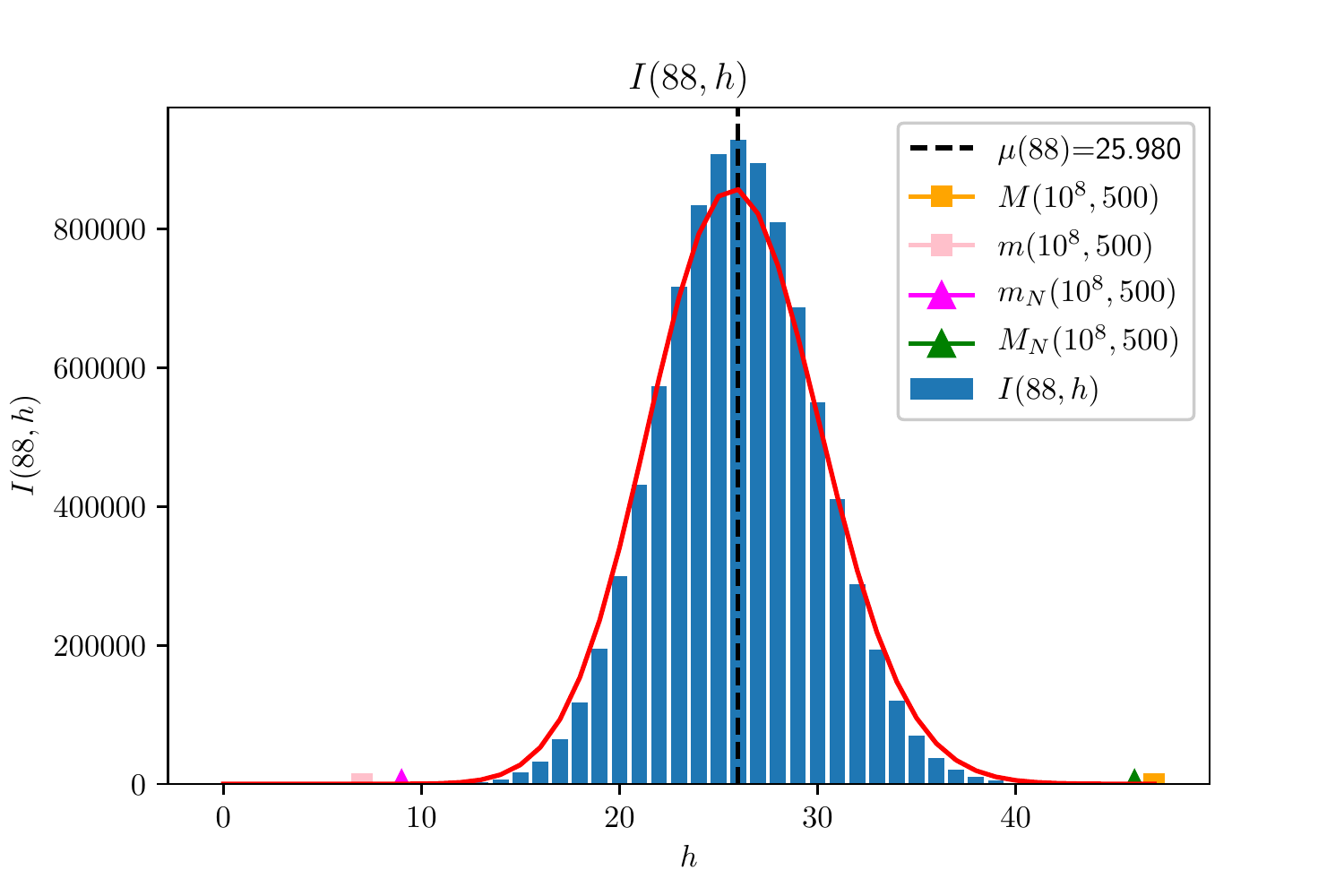}
\includegraphics[scale=.3]{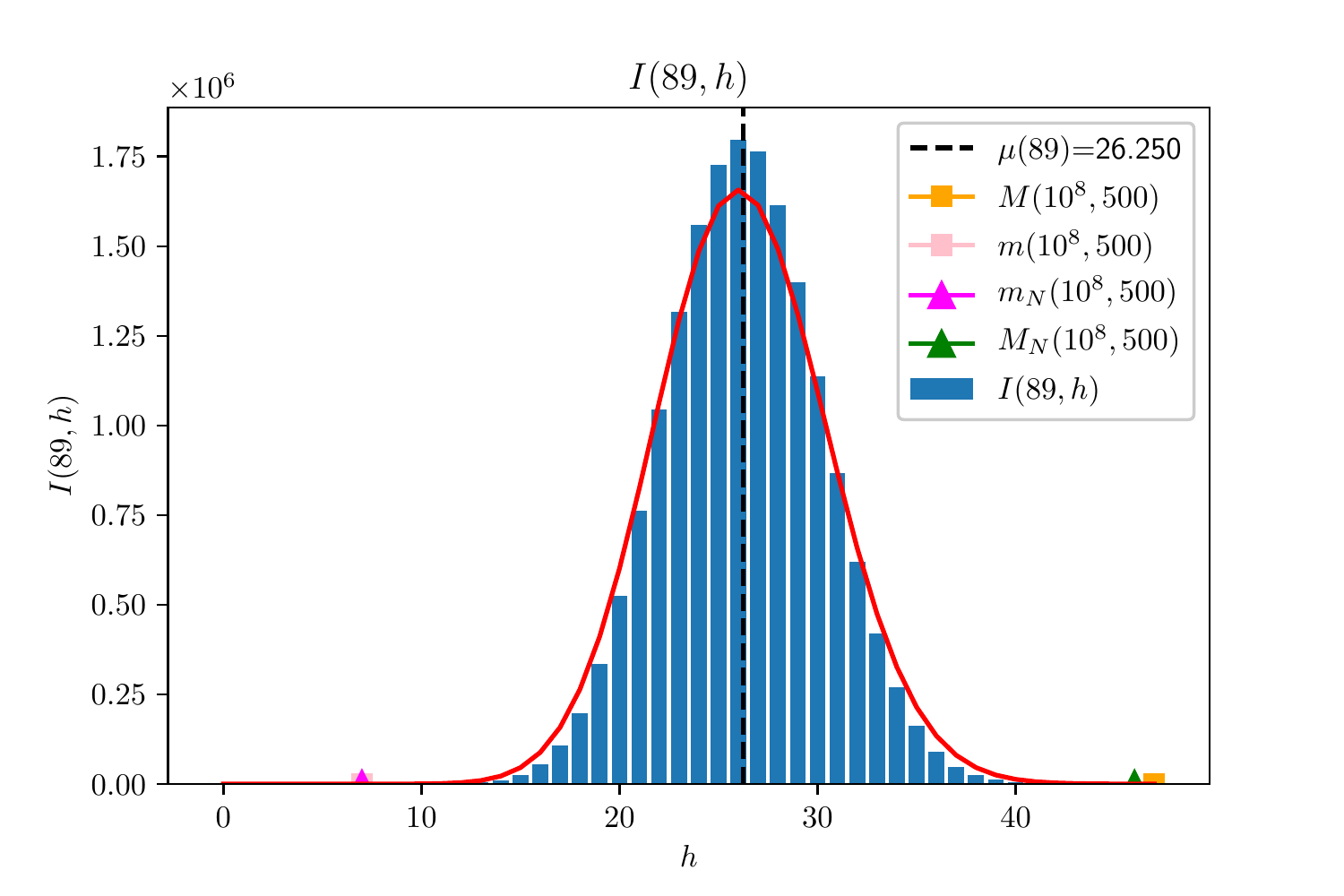}
\includegraphics[scale=.3]{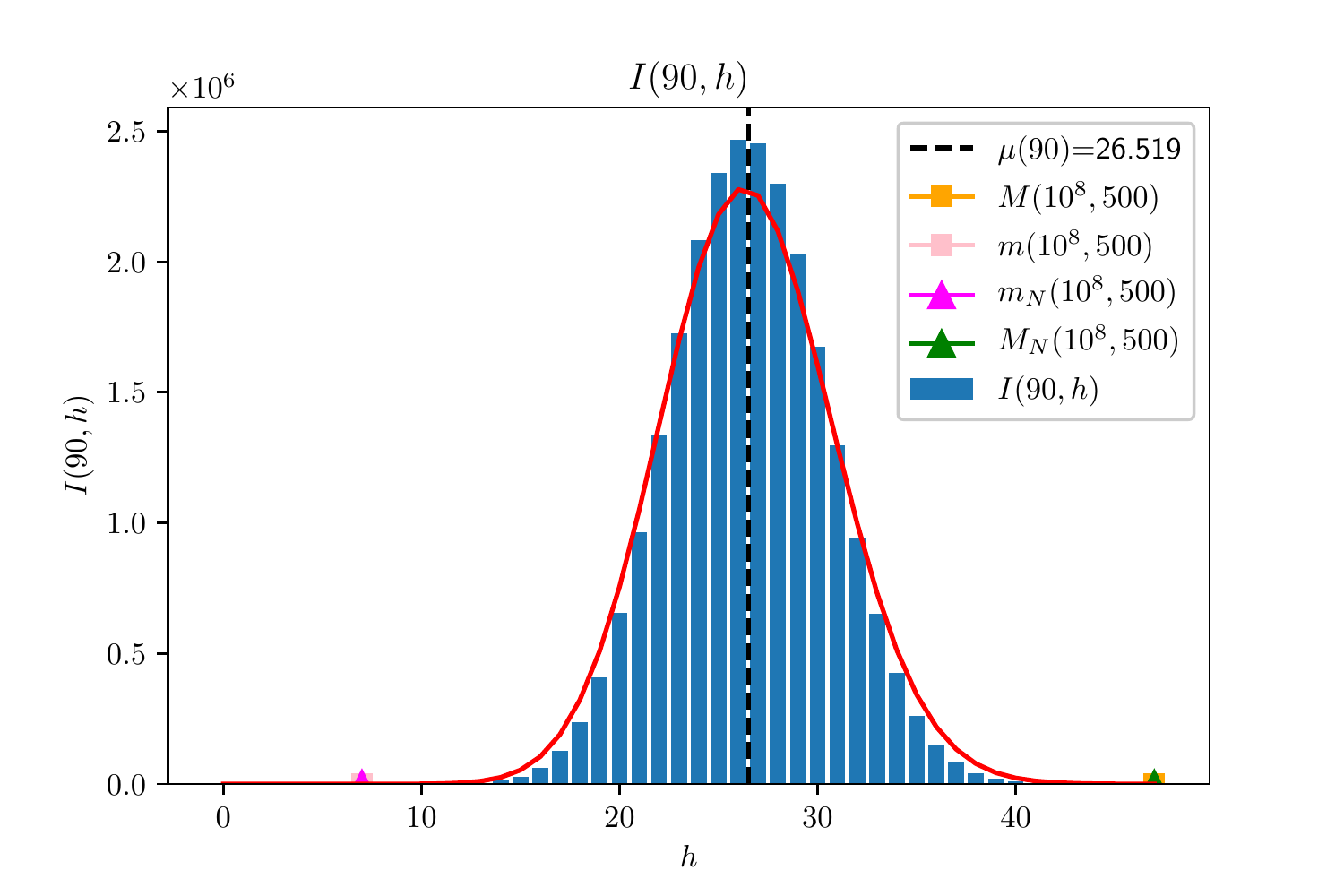}
\newline

\includegraphics[scale=.3]{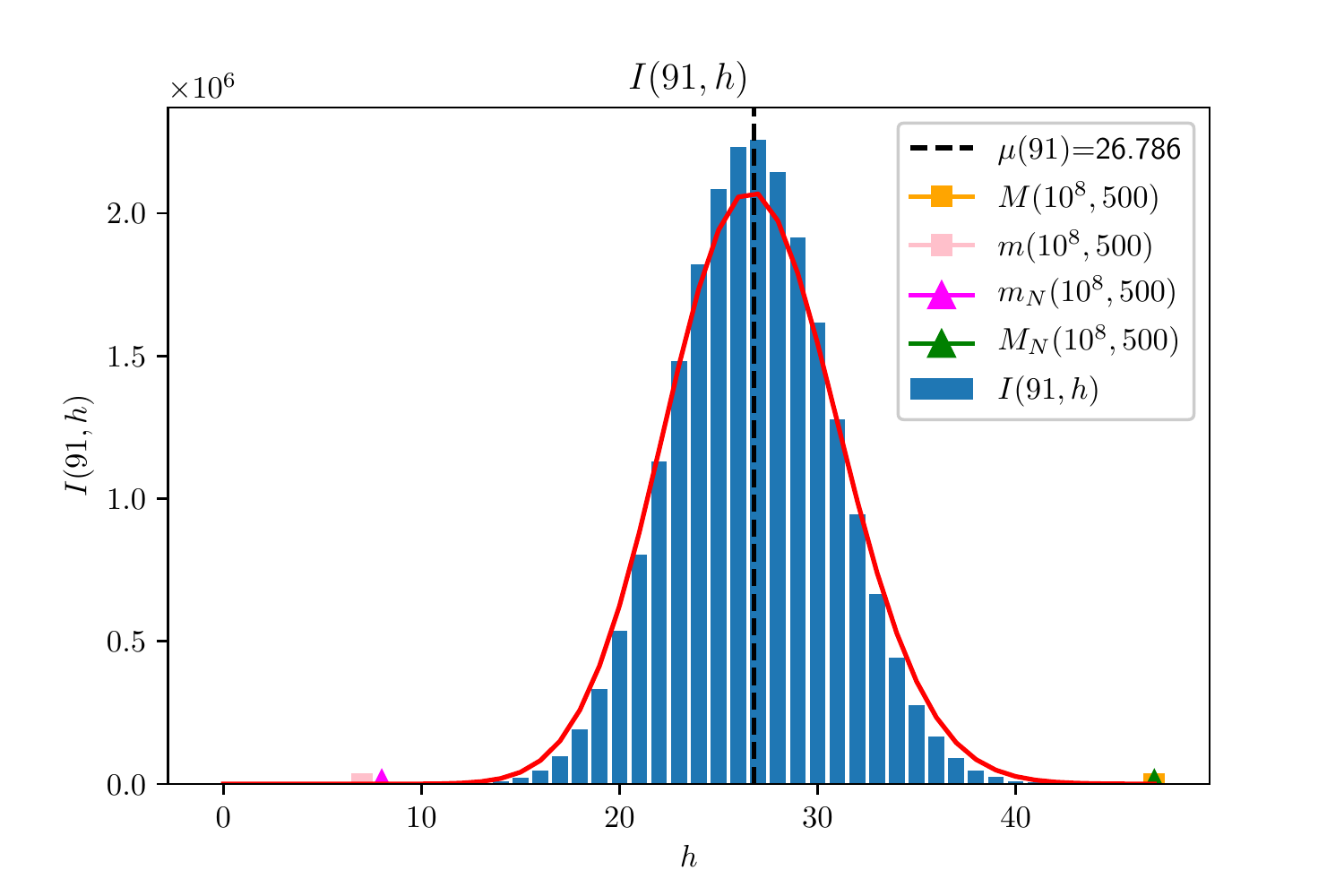}
\includegraphics[scale=.3]{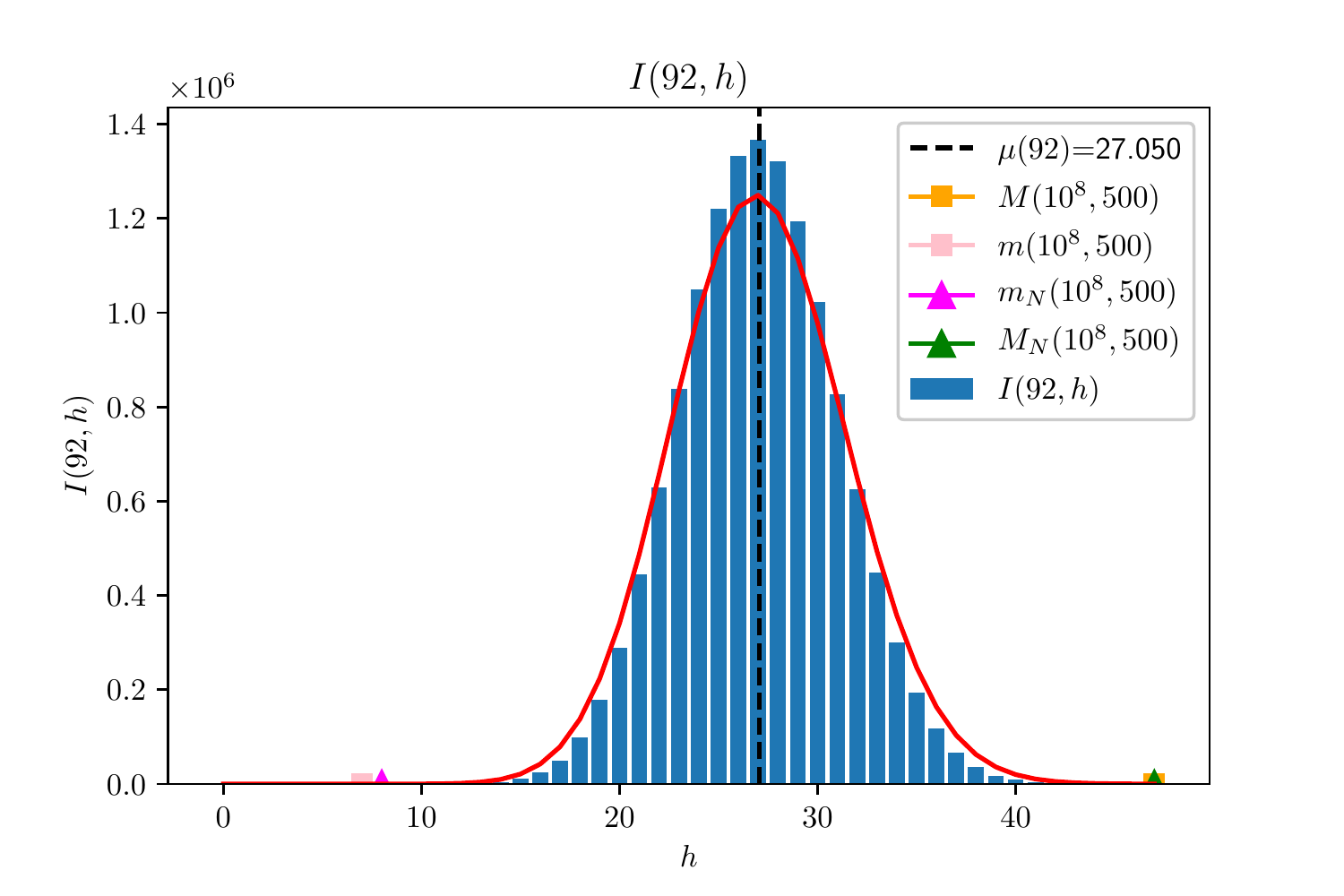}
\includegraphics[scale=.3]{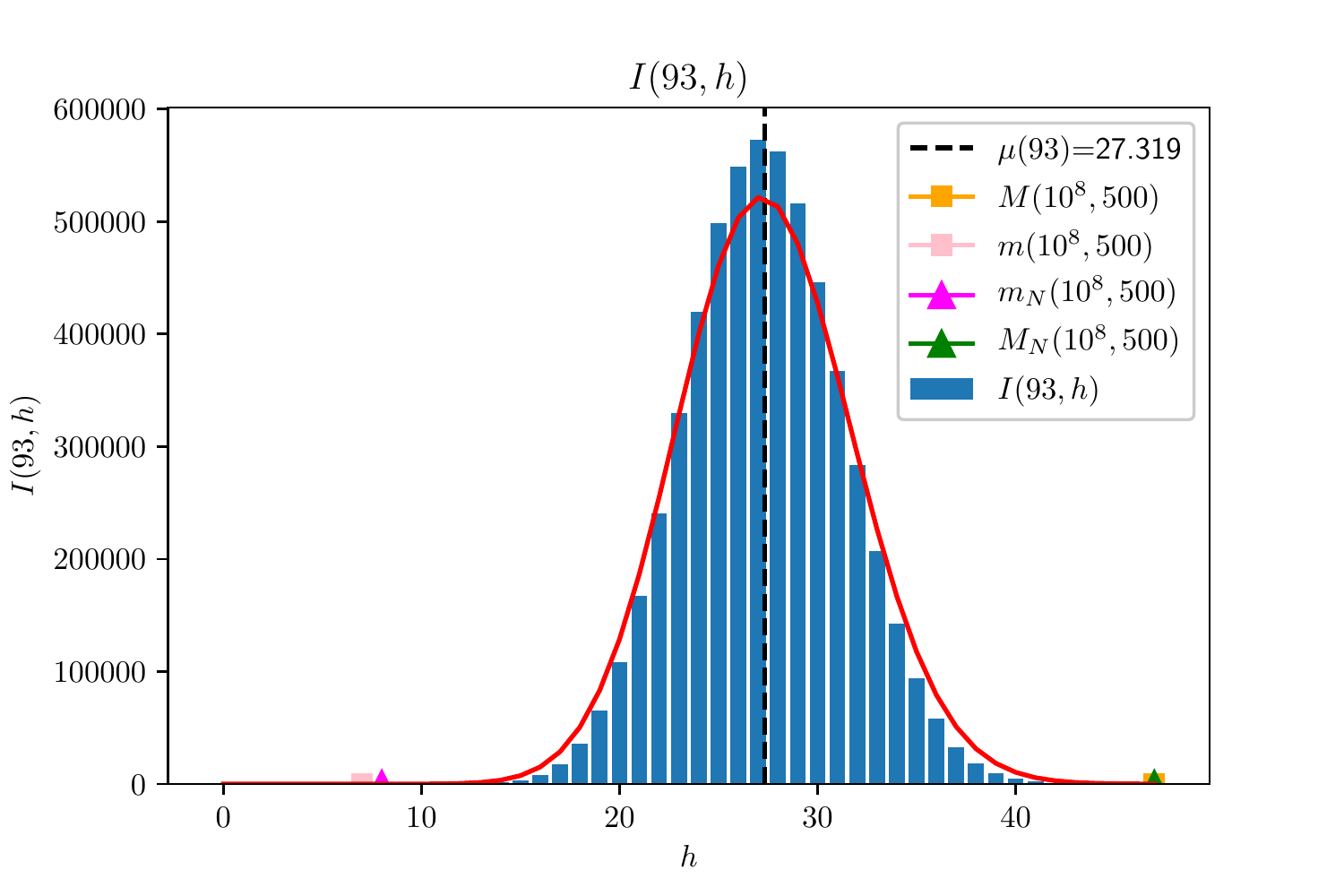}
\newline

\includegraphics[scale=.3]{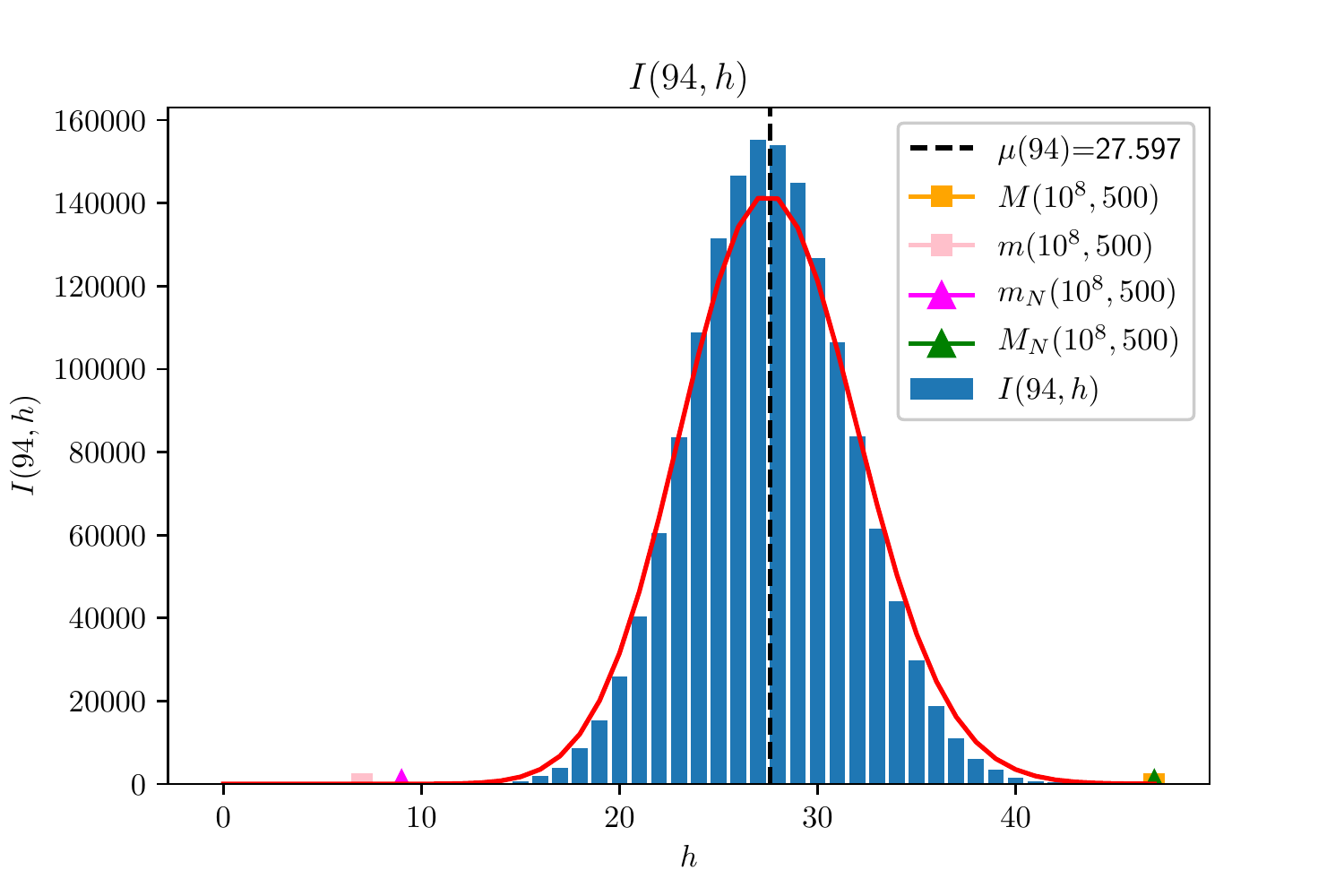}
\includegraphics[scale=.3]{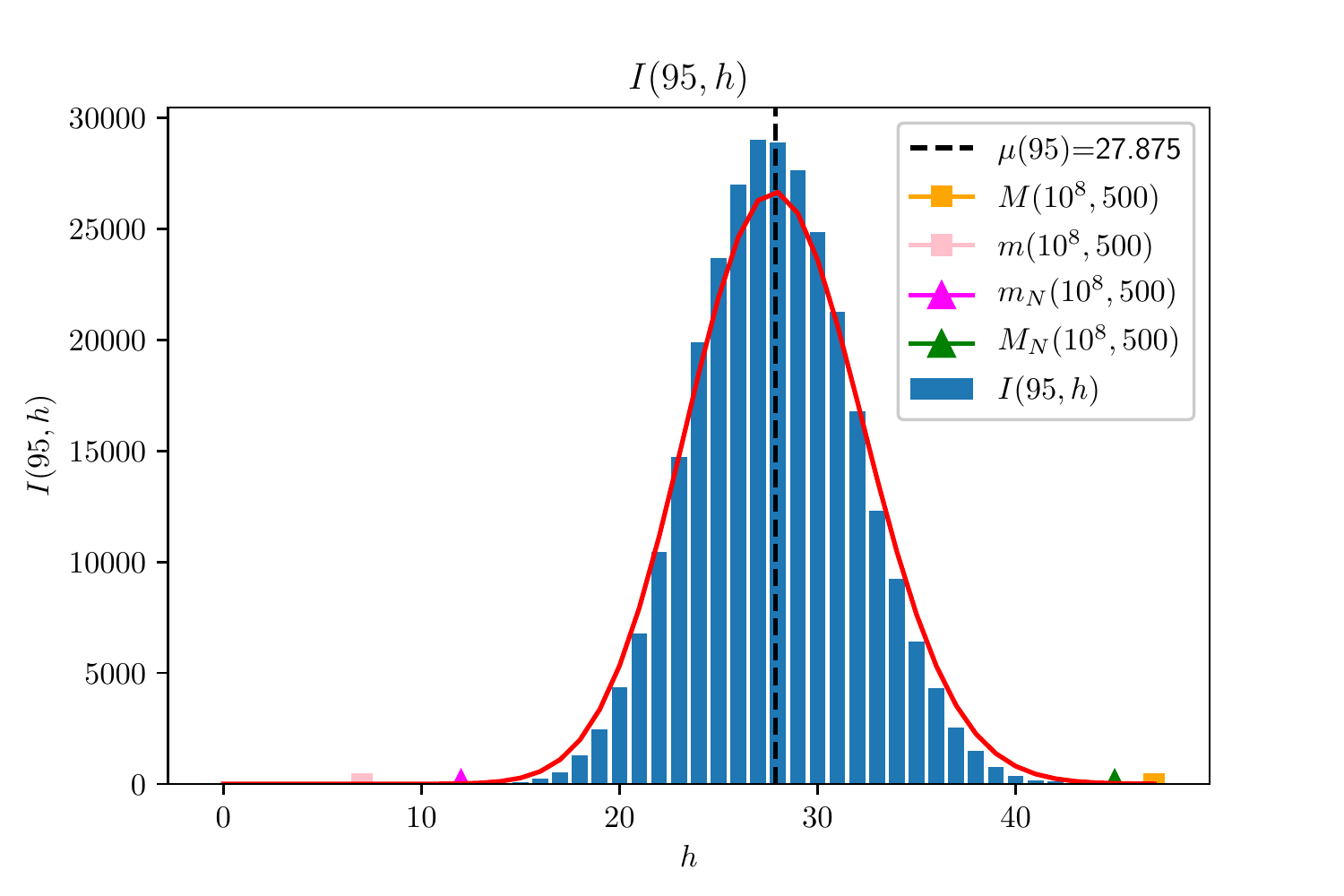}
\includegraphics[scale=.3]{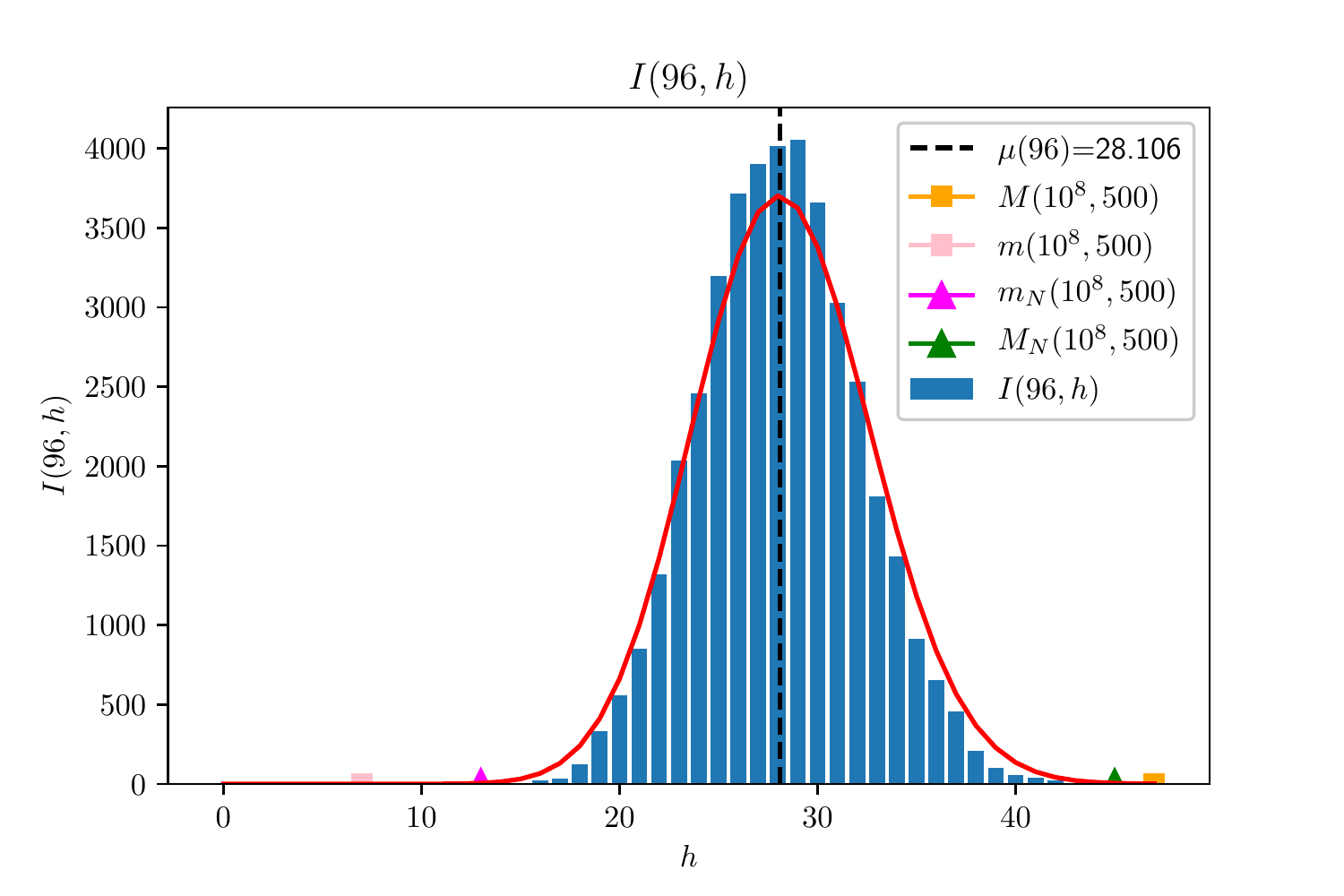}
\newline
\caption{Testing the distributions, $h$ vs $I(N,h)$, for $85\leq N\leq 96$.} }
\end{figure} 

{\small
\[
\begin{array}{c|cccccccccccc}
h& 85 & 86 & 87 & 88 & 89 & 90 &   91 & 92 & 93 & 94 & 95 & 96  \\
\hline
\text{Data Mean} & 25.15 & 25.42 & 25.71 &   25.99 &   26.25 &   26.52 &   26.79 &   27.06 &   27.32 &   27.60 &   27.88 &   28.11 \\ 
 \hline
 \text{Exp Mean}  & 25.04 &  25.33  & 25.62 & 25.92 & 26.21 & 26.51 & 26.80 & 27.1 & 27.39 & 27.69 & 27.98 & 28.28 \\
 \hline
\text{Data Var} & 15.26 & 15.21 & 15.29 & 15.44 & 15.56 & 15.67 & 15.80 & 15.94 & 16.02 & 16.18 & 16.32 & 16.20\\
\hline
\text{Exp Var} &  17.71 & 17.91 & 18.12 & 18.32 & 18.51 & 18.71 & 18.91 & 19.10 & 19.30 & 19.50 & 19.70 & 19.88\\
\end{array}
\]
}

 Replacing  $\log x$ by $\log 4x/e$ as i the first example the overall expected mean is $26.5858\dots$ and the new expected variance is
 $15.8003\dots$, which again is a pretty good fit with this data.

The data in this appendix makes a compelling case that one should develop a different model, stemming from the binomial distribution, but in which the $X_n$ are not independent. Instead,   their dependence must imply that the number of primes in short intervals of length $y$ between $x$ and $2x$ satisfies the normal distribution with the variance predicted by Montgomery and Soundararajan, and then perhaps we might see what this new model might give for tail probabilities. We would thus revise our predictions for $M(x,y), m(x,y)$ and the largest gaps between consecutive primes.\footnote{Though hopefully only in the secondary terms, so as not to invalidate the conjectures in this paper!} We hope to return to this key topic in a further paper.

\section{Pre-sieving intervals of length $y$ by the primes up to $y$}


Fix $x$ and $y$, let $P=P(y)$ and assume that $S(y)\sim \frac y{\log y}$.
Recall that
$ I(N)=\{ X\in (x,2x]:\ S(X,y,y) =N\} $, where $0\leq N\leq S(y)$, and let $\# I(N)=:x^{\theta_N}$. Now 
 \[
 \max_N \# I(N)\geq x/(S(y)+1)\geq x/y \geq x^{1-o(1)},
 \]
so there exist $N$-values for which $\theta_N=1+o(1)$. It is not hard to show that 
$S(X,y,y) = \frac {\phi(P)}Py+O(y^{1/2+o(1)})$ for almost all $X \mod P$; but we cannot assume that the distribution of $\# I(N)$ is comparable in the
restricted interval  $X\in(x,2x]$, with the distribution in the much larger set $[0,P)$.

We will use Proposition 1 with $L=\frac{\phi(P)}{P} \log x\sim e^{-\gamma} \frac{\log x}{\log y}$ and $x$ (there) equal to $\# I(N)$ to predict the values of 
\[
M_N(x,y) := \max\{ \pi(X,X+y]:\ x<X\leq 2x \text{ and } S(X,y,y)  =N\}
\]
for each $N$ with $I(N)$ non-empty.   From these predictions we obtain our  predictions for
 \[
 M(x,y) = \max_N  M_N(x,y).
 \]

In section \ref{applyingmodel-veryshort}, the independence hypothesis of Proposition 1 was satisfied as the intervals were disjoint. Here the intervals in $I(N)$ might overlap, so we replace $I(N)$ by $I'(N)$, the  largest subset of $I(N)$  of disjoint intervals.  
Evidently $\# I(N)\geq \# I'(N)\geq \# I(N)/y$ so  $ \# I'(N)=x^{\theta_N}/y^{O(1)}$; the  $y^{O(1)}$-factor is irrelevant in applying Proposition 1  when $\theta_N>0$.

We will focus our heuristic on those integers $N$ for which $\theta_N=1+o(1)$ (working with other $N$ will only affect our heuristic in the range with $Y\ll\log x$, as we discuss   in a footnote).  Therefore we let $N_*=N_*(x,y)$  be the largest integer $N$  for which $\theta_N=1+o(1)$ and $c_*:=c_{N_*}$, where we define $c_N$ by  $N=:c_N\frac {\phi(P)}Py$ for each $N$.

  \medskip
  
\noindent \textbf{Predictions, by  pre-sieving up to $y$}:  {\sl
 If $\log x\ll y\leq (e^\gamma/c_*) \log x$ then\footnote{Had we included  $N$-values for which $\theta_N<1$ in our calculation then instead we would have predicted that
 $M(x,y)   \sim c_N\cdot e^{-\gamma} \frac{y}{\log y}$ in the range $\log x\ll y\leq (\theta_N/c_N)e^\gamma \log x$ where $c_N$ is chosen as large as possible so that  $c_N y\leq e^\gamma \theta_N \log x$. This makes sense since $y$ is fixed, and our job is to select the optimal $N$-value. However, if $\theta_N<1$ then this new prediction leads to  complications:  At the smallest $x$-value in this range we have the prediction
 $M(x,y)   \sim \theta_N  \frac{\log x}{\log y}$, whereas the next range begins with the prediction $M(x,y)   \sim  \frac{\log x}{\log y}$. Since there can be no
 discontinuity in these predictions that means that there must be at least one other $(x,y)$-range with a different $N$-value in-between, etc.  Because this gets so complicated we made the choice to make the simplifying assumption (Occam's razor) that we select from those $N$ with $\theta_N=1+o(1)$ in our heuristic.}
 \[
 M(x,y)   \sim c_*\cdot e^{-\gamma} \frac{y}{\log y}.
 \]
 If $(e^\gamma/c_*) \log x\leq y = o( (\log x)^2)$ then 
 \[
 M(x,y) \sim   \frac {  \log x }{ \log(\frac {(\log x)^2}y)} .
 \]
 Finally if $y=\lambda (\log x)^2$ with $\lambda>0$ then 
 \[
 M(x,y) \sim \max_N   c_N \delta_+(c_N\lambda/\theta_N) \cdot \frac y{\log x}.
 \]
 }
 \medskip
 
 If $\lambda$ is large and $y=\lambda (\log x)^2$ then
 \[
 c_N \delta_+(c_N\lambda/\theta_N) = c_N + \sqrt{ \frac{2\theta_Nc_N}\lambda } + O\bigg( \frac {1}{\lambda} \bigg) ,
 \]
 and so $M(x,\lambda (\log x)^2)\sim c_\dag \frac y{\log x}$ as $\lambda\to \infty$ where $c_\dag = \max_N c_N$ where the maximum is taken over all those $c_N$ with $\theta_N\gg 1$.
 
 These predictions are substantially more complicated than those obtained when pre-sieving up to $\epsilon \log x$. By Occam's razor, we choose to follow the other path though it is feasible that both will yield the same prediction if only we could at least partly resolve the relevant sieve questions
(that is, determine the values of $c_\dag, c_*$ and $\max_N  \{  c_N :\ c_N  \leq u \theta_N\}$ for each $u>0$).
 
 \begin{proof}[Deduction of the above predictions from Proposition 1]
 
 Evidently $\sum_N \# I(N)=x$, each $N\leq S(y)$ and 
 \[
 \sum_N  N \# I(N)= y \#\{ n\in (x,2x]:\ (n,P)=1\} +O(y^2) \sim \frac{\phi(P)}P x y,
 \] 
 so that $\#\{ n\in (X,X+y]:\ (n,P)=1\}$ averages $\sim \frac{\phi(P)}P y$ over all $X\in (x,2x]$. 
 We can restrict attention in both sums to those $N$ with $\theta_N=1+o(1)$ with only a negligible error term, and so by taking the average over such $N$ we deduce that  $c_*\geq 1$.

We take the largest subset of the  intervals in $I(N)$ that begin at least $y$ apart (so there are $\# I(N) y^{O(1)}$ such intervals).
We can employ Proposition 1 with $L\sim e^{-\gamma} \frac{\log x}{\log y}$, so that $\log L\sim \log\log x$. This yields that
  \[
 M_N(x,y)\sim \begin{cases} \, \qquad N &\text{ if }     N\leq   \frac{\log  \# I(N)}{\log\log x} ;\\
      \frac {\log  \# I(N)}{ \log \big(\tfrac{L\log  \# I(N)}{N}  \big) } & \text{ if }       \frac{\log  \# I(N)}{\log\log x} \leq  N=o(L\log  \# I(N));\\
   \delta_+(\lambda) \tfrac NL &\text{ if } N=    \lambda  L\log  \# I(N) \text{ with } \lambda>0.
 \end{cases}
 \]
The first range is $c_Ny\lesssim e^\gamma  \theta_N \log x$, and therefore the maximum occurs when $N=N^*$ provided $y \leq c_*^{-1}  e^\gamma  \log x$.
For those $N$ with $\theta_N<1$ the first range might be applicable for larger $y$. However for these $y$ the predicted value of $M_{N^*}(x,y)$ in the second range will be larger than those $M_N(x,y)$.  Obtaining the results in the other two ranges is straightforward.
 \end{proof}

 \bibliographystyle{plain}

\end{document}